\documentclass[12pt]{amsart}

\usepackage{url}
\usepackage{graphicx}
\usepackage{amsmath}
\usepackage{amscd}
\usepackage{amsfonts}
\usepackage{amssymb}
\usepackage{color}
\usepackage{fullpage}
\usepackage{mathtools}
\usepackage[pagebackref,hypertexnames=false, colorlinks, citecolor=red,linkcolor=blue, urlcolor=red]{hyperref}
\usepackage{todonotes}
\usepackage{enumitem}
\usepackage{comment}
\usepackage{cleveref}

\numberwithin{equation}{section}

\newtheorem{theorem}{Theorem}[section]
\newtheorem{lemma}[theorem]{Lemma}
\newtheorem{proposition}[theorem]{Proposition}

\newtheorem{quest}[theorem]{Question}

\newtheorem{corollary}[theorem]{Corollary}

\newtheorem{thmA}{Theorem}

\theoremstyle{definition}
\newtheorem{example}[theorem]{Example}
\newtheorem{remark}[theorem]{Remark}
\newtheorem{definition}[theorem]{Definition}

\newcommand{\be}{\begin{equation}}
\newcommand{\ee}{\end{equation}}
\newcommand{\bes}{\begin{equation*}}
\newcommand{\ees}{\end{equation*}}

\newcommand{\cA}{\mathcal{A}}

\newcommand{\cE}{\mathcal{E}}
\newcommand{\cF}{\mathcal{F}}

\newcommand{\cH}{\mathcal{H}}

\newcommand{\cK}{\mathcal{K}}

\newcommand{\cM}{\mathcal{M}}
\newcommand{\cN}{\mathcal{N}}
\newcommand{\cO}{\mathcal{O}}

\newcommand{\bB}{\mathbb{B}}
\newcommand{\bC}{\mathbb{C}}
\newcommand{\C}{\mathbb{C}}
\newcommand{\bD}{\mathbb{D}}

\newcommand{\bM}{\mathbb{M}}
\newcommand{\bN}{\mathbb{N}}

\newcommand{\ol}{\overline}

\newcommand{\Tr}{\operatorname{Tr}}

\newcommand{\alg}{\operatorname{alg}}

\newcommand{\id}{\operatorname{id}}

\newcommand{\fB}{{\mathfrak{B}}}

\newcommand{\fD}{{\mathfrak{D}}}

\newcommand{\dom}{\operatorname{Dom}}
\newcommand{\GL}{\mathrm{GL}}
\DeclareMathOperator{\oa}{OA_u}
\DeclareMathOperator{\cmax}{C^*_{\max}}
\DeclareMathOperator{\ran}{ran}

\DeclareMathOperator{\op}{op}
\DeclareMathOperator{\Ad}{Ad}

\begin{document}

\title{On the spectral radius of operator tuples}

\author{Marcel Scherer}
 \address{M.S., Faculty of Mathematics\\
 Technion Israel Institute of Technology\\
 Technion City, Haifa\; 3200003\\
 Israel}

 \author{Orr Moshe Shalit}
 \address{O.M.S., Faculty of Mathematics 
 \\The Helen Diller Quantum Center
 \\Technion Israel Institute of Technology\\
 Technion City, Haifa\; 3200003\\
 Israel}
 
 \urladdr{\href{https://oshalit.net.technion.ac.il/}{\url{https://oshalit.net.technion.ac.il/}}}

 \author{Eli Shamovich}
 \address{E.S., Department of Mathematics\\
 Ben-Gurion University of the Negev\\
 Beer-Seva\; 8410501\\
 Israel}

 \thanks{The work of E. Shamovich is partially supported by BSF grant no.\ 2022235.}
 
 \subjclass[2010]{47A13, 46L52, 46L07}
 \keywords{Joint spectral radius, Simultaneous similarity, Noncommutative rational functions, Operator space}

\addcontentsline{toc}{section}{Abstract}

\begin{abstract}
In recent work, Shalit and Shamovich associated to every operator space structure $\cE$ on $\bC^d$ a spectral radius function $\rho_\cE$ on $d$-tuples of operators. 
The main goal of this paper is to elucidate how this spectral radius depends on the operator space structure. 
Let $V = (\bC^d, \|\cdot\|_V)$ be a normed space and let $\cE$ be a quantization of $V$. 
We show that for a commuting operator tuple $X$, the spectral radius depends only on the underlying normed space; more precisely, 
\[
\rho_\cE(X) = \max\{ \|\lambda\|_V : \lambda \in \sigma(X)\}, 
\]
where $\sigma(X)$ denotes the joint spectrum of $X$.
In contrast, we prove that if $\dim V \geq 3$, then $\rho_{\min(V)}(X) \neq \rho_{\max(V)}(X)$ already for some matrix tuple $X$. 
When $\cE_1$ and $\cE_2$ are selfadjoint operator spaces, we show that $\rho_{\cE_1}(X) = \rho_{\cE_2}(X)$ for all tuples $X$ implies $\cE_1 = \cE_2$. 
We present two proofs of this result; a key ingredient in one of them is a characterization, of independent interest, of $\rho_\cE(A)$ in terms of the invertibility domain of the linear pencil associated with $A$. 
Finally, we prove that if two operator spaces give rise to the same spectral radius function, then the algebras of locally uniformly bounded NC functions on the corresponding NC unit balls coincide.  
\end{abstract}

\maketitle

\section{Introduction}

\subsection{Background and motivation}

The spectrum and, by extension, the spectral radius are classical invariants of matrices, operators, and more generally elements of Banach algebras. Let $\rho(a) = \sup\{|\lambda| : \lambda \in \sigma(a)\}$ denote the spectral radius of an element $a$ in a Banach algebra $A$. The classical Gelfand formula states that
\[
\rho(a) = \lim_{n \to \infty} \|a^n\|^{1/n}.
\]
The spectral radius as a function on a Banach algebra captures essential properties of the algebra. For example, a result of Aupetit \cite[Corollary 5.2.3]{Aupetit-primer} states that $A/\operatorname{Rad}(A)$ is commutative if and only if $\rho$ is uniformly continuous (alternatively, $\rho$ is subadditive or submultiplicative). When $A$ is a $C^*$-algebra, it was proved by Shulman that the spectral radius is continuous if and only if $A$ is of Type I \cite{Shulman-spec_rad_cont}. Linear maps $A \to B$ preserving the spectrum of elements were studied by many following the works of Kadison and others (see for example \cite{Aupetit-spec_preserv_map, AupMou-spec_preserv_map} and the references therein). As for the spectral radius, it was shown by Bre\v{s}ar and \v{S}merl that if $A = B(X)$, where $X$ is a Banach space, then a linear map $\varphi \colon A \to A$ preserves the spectral radius of every element if and only if $\varphi = c \psi$, where $c$ is a unimodular constant and $\psi \colon A \to A$ is given by $\psi(T) = S T S^{-1}$, where $S \colon X \to X$ is invertible or $X$ is reflexive and $\psi(T) = S T^t S^{-1}$, where $S \colon X^* \to X$ is a bounded bijection.

Different notions of joint spectrum for a commuting tuple of operators or elements of a Banach algebra and corresponding spectral radii were defined by many authors (see \cite{Curto-applications} for an excellent overview). In the noncommutative setting the Gelfand formula led Bunce \cite{Bunce} and later Popescu \cite{Popescu-similarity} to define the joint spectral radius of a tuple $X_1,\ldots,X_d \in B(\cH)$ by
\[
\rho_{\operatorname{row}}(X) = \lim_{n \to \infty} \left\| \sum_{|\alpha| = n} X^{\alpha} (X^{\alpha})^* \right\|^{1/2n}.
\]
This joint spectral radius was successfully applied to various problems in noncommutative analysis \cite{arora2024optimal, augat2025operator, JMS-rat, JMS-clark, Pascoe21, Pop14}. In \cite{shalsham2025spectral}, Shalit and Shamovich introduced the notion of a joint spectral radius of $d$-tuples of operators with respect to an operator space structure $\cE$ on $\C^d$ generalizing the spectral radius of Bunce and Popescu; this new notion has already found applications in NC function theory \cite{sampat2026cyclicity}. In fact, two spectral radii corresponding to an operator space $\cE$ were proposed in \cite{shalsham2025spectral}. 
A naive definition gives rise to the {\em minimal spectral radius} $\rho^{\min}_\cE$. 
An adaptation of this idea gives rise to the ``correct" spectral radius $\rho_{\cE}$ (we recall the necessary definitions below). The main result in \cite{shalsham2025spectral} is that an operator tuple $X$ is jointly similar to a point in the appropriate unit ball $\bB_\cE^{(\infty)}$ if and only if $\rho_\cE(X)<1$. 
We reprove this in Theorem \ref{thm:SS25} below, extending the result also for infinite dimensional operator spaces. The goal of this paper is to study the dependence of the spectral radius $\rho_\cE$ on the operator space structure $\cE$. The first main result of the paper is 
\begin{thmA}[Theorems \ref{thm:comm_specrad} and \ref{thm:min_specrad}]
    Let $V$ be $\C^d$ equipped with the norm $\|\cdot\|_V$. Let $X \in B(\cH)^d$ be a commuting $d$-tuple. Then, for every operator space structure $\cE$ over $V$ the spectral radii $\rho_{\cE}(X)$ and $\rho^{\min}_{\cE}(X)$ coincide and are given by 
    \[
    \rho_{\cE}(X) = \rho^{\min}_{\cE}(X) = \max\{ \|\lambda\|_V \mid \lambda \in \sigma(X) \}.
    \]
    Here, $\sigma(X)$ is the Banach algebra joint spectrum of the tuple $X$. In particular, the joint spectral radius of a commuting tuple is independent of the quantization.
\end{thmA}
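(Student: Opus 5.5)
The plan is to prove a lower bound $\max_{\lambda\in\sigma(X)}\|\lambda\|_V\le \rho^{\min}_\cE(X)$ and an upper bound $\rho_\cE(X)\le \max_{\lambda\in\sigma(X)}\|\lambda\|_V$, and combine them with the inequality $\rho^{\min}_\cE\le\rho_\cE$ that holds for every tuple. I use the Gelfand-type descriptions of both radii from \cite{shalsham2025spectral}. In both, one takes the $k$-th root of the norm of $\sum_{|\alpha|=k}X^\alpha\otimes e_\alpha$, where $e_\alpha=e_{\alpha_1}\otimes\cdots\otimes e_{\alpha_k}$. For $\rho^{\min}_\cE$ this element is normed in a minimal-type tensor power of $\cE$. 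For $\rho_\cE$ it is normed in the Haagerup tensor power $B(\cH)\otimes_h\cE^{\otimes_h k}$. Because the Haagerup norm dominates the minimal norm, $\rho^{\min}_\cE\le\rho_\cE$. Set $r=\max\{\|\lambda\|_V:\lambda\in\sigma(X)\}$.

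\emph{Lower bound.} Let $\mathcal{A}$ be a maximal commutative Banach subalgebra of $B(\cH)$ containing $X_1,\dots,X_d$. Then $\sigma(X)=\{(\chi(X_1),\dots,\chi(X_d)):\chi \text{ a character of } \mathcal{A}\}$. A character $\chi$ is a contractive linear functional, so it is completely contractive. Hence the slice map $\chi\otimes\id$ is contractive from $\mathcal{A}\otimes_{\min}\cE^{\otimes k}$ to $\cE^{\otimes k}$, for any of the relevant tensor norms. This slice sends $\sum_{|\alpha|=k}X^\alpha\otimes e_\alpha$ to $\sum_\alpha\lambda^\alpha e_\alpha=\lambda^{\otimes k}$, where $\lambda=\chi(X)$. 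Both the Haagerup and minimal norms are cross norms, so $\|\lambda^{\otimes k}\|=\|\lambda\|_V^k$. Taking $k$-th roots gives $\rho^{\min}_\cE(X)\ge\|\lambda\|_V$ for every $\lambda\in\sigma(X)$. (If the paper's joint spectrum is taken relative to a smaller algebra, I would note that $\|\cdot\|_V$ is plurisubharmonic. Its maximum over the polynomial hull therefore equals its maximum over the set, so the choice of algebra does not affect $r$.)

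\emph{Upper bound.} Fix $t>r$ and choose $r<t'<t$. Then $\sigma(X)$ lies in the open convex set $U=\{z:\|z\|_V<t'\}$, which is a polynomially convex (indeed convex) neighbourhood. I would apply the Taylor (or Waelbroeck/Cauchy--Weil) holomorphic functional calculus with values in a Banach space. For a function $f$ holomorphic near $\overline U$, write
\[
f(X)=\int_{\partial_0}K(z,X)\otimes f(z)\,d\mu(z),
\]
where $K(\cdot,X)$ is a continuous $B(\cH)$-valued kernel on a compact cycle that depends only on $X$ and $U$, not on $f$. Apply this to $f_k(z)=z^{\otimes k}=\sum_{|\alpha|=k}z^\alpha e_\alpha\in\cE^{\otimes_h k}$; by commutativity, $f_k(X)=\sum_{|\alpha|=k}X^\alpha\otimes e_\alpha$. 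The integrand is an elementary tensor, and the Haagerup norm is a cross norm on elementary tensors. This yields
\[
\Bigl\|\sum_{|\alpha|=k}X^\alpha\otimes e_\alpha\Bigr\|_h\le C_U\,\sup_{z\in\partial_0}\|z\|_V^k\le C_U\,t'^k
\]
with $C_U$ independent of $k$. Taking $k$-th roots gives $\rho_\cE(X)\le t'<t$. As a cross-check, this also gives $\rho_\cE(X/t)<1$, which matches the similarity criterion of Theorem \ref{thm:SS25}. Letting $t\downarrow r$ finishes the proof.

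The main obstacle is the upper bound. Specifically, one must justify that the vector-valued functional calculus really produces $\sum_\alpha X^\alpha\otimes e_\alpha$ as a Bochner integral of elementary tensors in the completed Haagerup tensor product. One must also check that the constant is uniform in $k$, which requires a single kernel and contour for all $k$. If the Taylor calculus proves awkward here, I would first try an explicit alternative. After a similarity, $\sigma(X)$ lies in a polydisc-type neighbourhood, so for $d$ commuting operators a Cauchy integral over a product of circles is available. I would then use convexity of $U$ to approximate $U$ by finite intersections of half-spaces $\{|\langle z,\varphi_i\rangle|<t'\}$ with $\varphi_i\in V^*$ of norm one.
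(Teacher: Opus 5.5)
Your proof is correct, and it takes a genuinely different, more elementary route than the paper.

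\textbf{How the paper argues.} The paper proves the upper bound in four steps:
\begin{enumerate}
\item It shows that the Taylor calculus, written via Vasilescu's Martinelli formula, is a \emph{completely bounded} homomorphism $A(B_V)\to B(\cH)$.
\item It lifts this to $A(\bB_{\max(V)})$ through the restriction map.
\item It invokes Lemma~\ref{lem:specrad_homo}, which rests on the similarity theorem (Theorem~\ref{thm:SS25}), to get $\rho_{\max(V)}(X)\le 1$.
\item It uses Lemma~\ref{lem:specrad_ineq} to pass from $\max(V)$ to $\cE$.
\end{enumerate}
Its lower bound for $\rho_\cE$ goes through similarity into the ball and pulling characters of $\cA_X$ back to $A(\bB_\cE)$ (Lemma~\ref{lem:specrad_implies_spectrum}). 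For $\rho^{\min}$, it reduces to $\min(V)$, realizes $\min(V)^{\otimes_{\min} n}$ inside $C(B^n)$, and computes $\rho^{\min}_{\min(V)}(X)$ exactly via a Berger--Wang type formula and the spectral mapping theorem.

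\textbf{How your argument differs.} You work directly with the Gelfand-type formulas and use only that the minimal and Haagerup norms are cross norms.
\begin{itemize}
\item Slicing by a character gives $\|\chi(X)\|_V^k\le\|\sum_{|w|=k}X^w\otimes Q_{w_1}\otimes\cdots\otimes Q_{w_k}\|_{\min}$ already for each $k$.
\item The Martinelli kernel exhibits $\sum_{|w|=k}X^w\otimes e_w$ as an integral of elementary tensors.
\end{itemize}
This removes complete boundedness, the universal algebras, the similarity theorem and the Berger--Wang formula from the argument. It treats each quantization $\cE$ directly instead of sandwiching it between $\min(V)$ and $\max(V)$. It also makes transparent why the quantization is invisible for commuting tuples. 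In exchange, the paper's route stays inside its framework of completely bounded representations of $A(\bB_\cE)$, and along the way it yields the complete boundedness of the Taylor calculus on $A(B_V)$.

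\textbf{Three minor points.}
\begin{itemize}
\item The obstacle you flag is not real. Since $\cE^{\otimes k}$ is finite dimensional, $\sum_w X^w\otimes e_w=\int K(z)\otimes z^{\otimes k}\,dS(z)$ is just the scalar formula applied to each monomial $z^w$. The kernel is the paper's $M_X$ on $\partial U'$, where $U'$ is smoothly bounded with $\sigma_T(X)\subset U'\subset\overline{U'}\subset t'B_V$; you need such a $U'$ because the norm ball itself need not have smooth boundary. This kernel does not depend on $k$.
\item The ambient space for $\rho_\cE$ is $B(\cH)\otimes_{\min}(\cE^{\otimes_h k})$, not $B(\cH)\otimes_h\cE^{\otimes_h k}$. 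This is harmless, since you only use the cross property.
\item The detour through a maximal commutative subalgebra is unnecessary. Characters of $\cA_X$ are contractive, so your slice argument applies to them verbatim.
\end{itemize}
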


On the other hand, one would expect that in general the spectral radii of two different quantizations of the norm structure on $\C^d$ should differ. However, care is needed with the verb ``differ'': it is not hard to check that if $\rho_{\operatorname{col}}$ is the joint spectral radius corresponding to the column operator space structure, then for every $d$-tuple of matrices $X$ of any size, $\rho_{\operatorname{row}}(X) = \rho_{\operatorname{col}}(X)$ (see Example \ref{ex:row_col_equal} and \cite[Lemma 2.1]{JMS-clark}); but, as observed in Example \ref{ex:row_col_equal} (see also Example \ref{ex:not_persist}), there is a $d$-tuple of operators that can distinguish these spectral radii. In Example \ref{ex:row_col_2} we show that some spectral radii of quantizations can be distinguished on the  level of matrices. More generally, we prove

\begin{thmA}[Theorem \ref{thm:selfadj_ineq}]\label{thm:B}
    Let $\cE_1$ and $\cE_2$ be two different selfadjoint operator spaces quantizing the same involutive Banach space structure $V$ on $\C^d$. Then, there exists a $d$-tuple of matrices such that $\rho_{\cE_1}(X) \neq \rho_{\cE_2}(X)$.
\end{thmA}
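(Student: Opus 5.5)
The plan is to find, inside each selfadjoint operator space, a class of matrix tuples on which the spectral radius $\rho_\cE$ coincides with the matrix norm. After that, the two different operator space structures are detected by the norm of a single selfadjoint element.

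\emph{Step 1: rewrite $\rho_\cE$ via similarity.} By Theorem \ref{thm:SS25} and the homogeneity of $\rho_\cE$ (which I assume from the definitions, but have not checked against them), we get for a matrix tuple $X\in M_n^d$
\[
\rho_\cE(X)=\inf\{r>0 : \exists S\in \GL_n,\ S X S^{-1}\in r\,\bB_\cE^{(\infty)}\}.
\]
Write $x=\sum_j e_j\otimes X_j\in M_n(\cE)$ for the element associated with $X$. Conjugating $X$ by $S$ replaces $x$ by $(1\otimes S)\,x\,(1\otimes S^{-1})$. Hence
\[
\rho_\cE(X)=\inf_{S\in \GL_n}\|(1\otimes S)\,x\,(1\otimes S^{-1})\|_{M_n(\cE)},
\]
where the norm is computed in $M_n(C^*(\cE))$ for any concrete embedding $\cE\subseteq C^*(\cE)$. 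If instead $\rho_\cE$ is defined by a Gelfand-type limit, the same lower bound used in Step 2 comes from $\|x^k\|^{1/k}$ and the spectral radius of $x$ in the $C^*$-algebra.

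\emph{Step 2: selfadjoint tuples have $\rho_\cE$ equal to the norm.} Since $\cE$ is a selfadjoint operator space, its involution is compatible with the one on $V$. So $x$ is a selfadjoint element of the $C^*$-algebra $M_n(C^*(\cE))$ exactly when $X$ satisfies the corresponding symmetry condition; call such $X$ \emph{selfadjoint tuples}. For such $x$:
\begin{itemize}
\item the conjugate $(1\otimes S)x(1\otimes S^{-1})$ has the same spectrum as $x$;
\item its norm is therefore at least the spectral radius of $x$, which equals $\|x\|$ because $x$ is selfadjoint.
\end{itemize}
Combined with the trivial bound (take $S=I$), this gives $\rho_\cE(X)=\|x\|_{M_n(\cE)}$ for every selfadjoint tuple $X$. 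The point to verify here is that the inequality $\|y\|\ge$ spectral radius of $y$ is taken in the $C^*$-algebra $M_n(C^*(\cE))$. The spectrum of $x$ in that algebra does not depend on the choice of $C^*$-cover, so the conclusion is intrinsic to $\cE$.

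\emph{Step 3: separate $\cE_1$ from $\cE_2$.} Since $\cE_1\neq\cE_2$ as operator space structures on $\C^d$, there are $n$ and $y=\sum_j e_j\otimes Y_j$ with $\|y\|_{M_n(\cE_1)}\neq\|y\|_{M_n(\cE_2)}$. Form the selfadjoint dilation
\[
x=\begin{pmatrix}0&y\\ y^*&0\end{pmatrix}\in M_{2n}(\cE_i),
\]
which has $\|x\|_{M_{2n}(\cE_i)}=\|y\|_{M_n(\cE_i)}$ for $i=1,2$. Here $y^*$ is formed using the common involution of $V$ that both $\cE_i$ quantize. Because the involution is the same, $x$ corresponds to one and the same matrix tuple $X\in M_{2n}^d$ in both spaces, and $X$ is selfadjoint in both. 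Step 2 then gives
\[
\rho_{\cE_1}(X)=\|y\|_{M_n(\cE_1)}\neq\|y\|_{M_n(\cE_2)}=\rho_{\cE_2}(X).
\]

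\emph{Main obstacle.} I expect the delicate point to be the bookkeeping in Steps 2 and 3. One has to check carefully that "selfadjoint tuple" is defined purely from the involution on $V$, so that it means the same thing for $\cE_1$ and $\cE_2$. One also has to check that the adjoint of $\sum_j e_j\otimes X_j$ in $M_n(\cE)$ corresponds to a tuple again, namely $(X_j^*)$ with the indices permuted by the involution. Finally, Step 1 must hold in the form used, as an infimum over similarities, which relies on Theorem \ref{thm:SS25} in finite dimensions.
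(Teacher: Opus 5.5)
Your proof is correct and has the same architecture as the paper's. The paper also first shows (Lemma \ref{lem: radius=norm}) that $\rho_\cE(x)=\|x\|_{M_n(\cE)}$ for selfadjoint $x$. It then applies this to the dilation $\left(\begin{smallmatrix}0&b\\ b^*&0\end{smallmatrix}\right)$ of an element $b$ on which the two matrix norms disagree. Your bookkeeping about the common involution corresponds to the paper's hypothesis that the identity map $\cE_1\to\cE_2$ is $*$-preserving.

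The difference is in how you obtain the lower bound $\rho_\cE(x)\geq\|x\|$.
\begin{itemize}
\item The paper works directly from the definition. The canonical surjective $*$-homomorphism $\pi\colon C^*_{\max}(\cE)\to C^*(\cE)$ cannot increase spectral radius, so $\|x\|=\rho_{C^*(\cE)}(x)=\rho(\pi(\iota_{\max}(x)))\leq\rho(\iota_{\max}(x))=\rho_\cE(x)\leq\|x\|$.
\item You pass through Theorem \ref{thm:SS25} to get $\rho_\cE(X)=\inf_S\|S^{-1}XS\|$. This step is fine, since homogeneity of $\rho_\cE$ is immediate from the definition. You then use similarity invariance of the spectral radius in a concrete $C^*$-algebra in which $x$ is selfadjoint.
\end{itemize}
Your route is valid, but it invokes the Paulsen similarity theorem for what is a one-line consequence of the definition. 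In exchange it gives a transparent reason for the lemma: a selfadjoint element already minimizes the norm over its similarity orbit. It also never requires working inside $C^*_{\max}(\cE)$. Your Gelfand-formula alternative is sound too. The Haagerup norms in \eqref{eq:Hsr} dominate $\|x^k\|$ computed in a concrete $C^*$-algebra, because multiplication is completely contractive on the Haagerup tensor product.

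One side remark in your Step 2 is false, though the argument does not rely on it: the spectrum of $x$ does depend on the $C^*$-cover. In fact $\iota_{\max}(x)$ need not even be selfadjoint in $C^*_{\max}(\cE)$. For $\cE=\bC$ with $\tau(z)=\bar z$, the element $\iota_{\max}(1)$ is the universal contraction, whose spectrum is the closed unit disc. This is exactly why the paper argues through the quotient $\pi$ rather than through selfadjointness in $C^*_{\max}(\cE)$. What is intrinsic is only the spectral radius of $x$ in a concrete realization implementing $\tau$, namely $\|x\|_{M_n(\cE)}$, and that is all you use.

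Similarly, the adjoint of $\sum_j e_j\otimes X_j$ is $\sum_j\tau(e_j)\otimes X_j^*$. In general this recombines the coordinates linearly rather than permuting them. This is harmless, since it depends only on the common involution $\tau$.
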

In Theorem \ref{thm:rho_E1E2} we obtain a weaker version of Theorem \ref{thm:B}, by which the spectral radii of two different selfadjoint operator spaces can be separated by operator tuples. 
The advantage of this method is that it uses a different proof, which might generalize to the case of arbitrary operator spaces. A particular case of Theorem \ref{thm:B} is obtained if we let $V$ be $\C^d$ equipped with a norm $\|\cdot\|_V$ such that $\|z\|_V = \|\bar{z}\|_V$ for every $z \in \C^d$, then either $\rho_{\min(V)} \neq \rho_{\max(V)}$ or $V$ admits a unique quantization. The latter claim generalizes as follows.
\begin{thmA}[Theorem \ref{thm:min_neq_max}]
    Let $V$ be a complex Banach space with $\dim V=d\geq 3$. Then there exist
$n\in\mathbb N$ and $X\in M_n(V)$ such that
  \[
    \rho_{\min(V)}(X)<\rho_{\max(V)}(X).
  \]
\end{thmA}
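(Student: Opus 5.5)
We need a plan for Theorem C: for $\dim V = d \ge 3$ there is a matrix tuple $X \in M_n(V)$ with $\rho_{\min(V)}(X) < \rho_{\max(V)}(X)$.

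We do not have the definitions in view, so we work with the expected ones. We expect $\rho_\cE(X) = \lim_k \|X^{\odot k}\|^{1/k}$, where $X^{\odot k}$ is the element of $M_{n^k}(\cE^{\otimes_h k})$ or a similar tensor power. By the main similarity theorem (Theorem \ref{thm:SS25}), $\rho_\cE(X) < 1$ if and only if $X$ is jointly similar to a point of $\bB^{(\infty)}_\cE$.

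\textbf{Reduction to a similarity statement.} It suffices to find a matrix tuple $X$ that is similar to a point of the open $\min(V)$-ball but to no point of the closed $\max(V)$-ball. Then $\rho_{\min(V)}(X) < 1 \le \rho_{\max(V)}(X)$ by Theorem \ref{thm:SS25}, which gives the strict inequality.

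\textbf{Step 1: the gap at a fixed level.} By the classical theorem of Paulsen, for $\dim V \ge 5$ the identity $\min(V) \to \max(V)$ is not completely isometric. The refinement of Paulsen and Pisier is that $\min(V) \neq \max(V)$ exactly when $\dim V \ge 3$; this is the $\alpha(V) > 1$ result, with the $d = 3, 4$ cases handled by Paulsen's work on $\ell^\infty_3$-type spaces. So there exist $n$ and $Y \in M_n(V)$ with $\|Y\|_{\min} = 1 < \|Y\|_{\max}$. We will use this as a black box.

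\textbf{Step 2: the main obstacle, passing from norms to spectral radius.} Similarity can shrink the norm, so $\|Y\|_{\max} > 1$ alone does not prevent $Y$ from being similar to a point of the $\max$-ball. We must therefore show that $\inf_S \|S Y S^{-1}\|_{\max} > \inf_S \|S Y S^{-1}\|_{\min}$ for a suitable $Y$.

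To rigidify, we would use the nilpotent block trick. Set
\[
X = \begin{pmatrix} 0 & tY \\ 0 & 0 \end{pmatrix}.
\]
Then $X$ is similar to $\begin{pmatrix} 0 & stY\\ 0&0\end{pmatrix}$ for every $s > 0$, so by itself it has spectral radius $0$; this shows the construction needs more structure. A more robust alternative is to place $Y$ inside a tuple with a fixed cyclic structure, for instance a weighted-shift-type $X$ built from $Y$ on $\bC^n \otimes \bC^m$, so that $\|X^{\odot k}\|$ is comparable to $\|Y^{\odot k}\|$. Then the question becomes whether $\lim \|Y^{\odot k}\|_{\max}^{1/k}$ exceeds $\lim \|Y^{\odot k}\|_{\min}^{1/k}$.

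\textbf{Step 3: using multiplicativity.} The $\min$ norm is injective, so the $\min$ tensor powers behave like $\min(V^{\otimes_\varepsilon k})$. The $\max$ (Haagerup) powers dominate these by the factor $\|Y\|_{\max}/\|Y\|_{\min}$ at each step, if a supermultiplicativity estimate holds. We would try to choose $Y$ as a "Paulsen–Pisier witness" that is norm-attaining under the canonical duality pairing with a $\min$-type dual element. Then the pairing $\langle Y^{\odot k}, \Phi^{\odot k}\rangle$ gives a lower bound $\|Y^{\odot k}\|_{\max} \ge c^k$ with $c > 1$, while $\|Y^{\odot k}\|_{\min} \le 1$. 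This yields
\[
\rho_{\min}(Y) \le 1 < c \le \rho_{\max}(Y).
\]

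We expect this duality and supermultiplicativity argument to be the hardest step, since it requires choosing the witness carefully.
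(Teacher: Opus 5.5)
There is a genuine gap, and it sits at the step you yourself flag as hardest. Nothing in Steps 2--3 turns the norm gap $\|Y\|_{\max(V)}>\|Y\|_{\min(V)}$ into a spectral-radius gap, and the proposed duality argument cannot do it. Take a tuple $\Phi$ in the closed unit ball of $M_m(\max(V)^*)=M_m(\min(V^*))$. Haagerup self-duality then gives only
\[
\|Y^{\odot k}\|_{\max(V)}\;\ge\;\Bigl\|\Bigl(\sum_{j=1}^d Y_j\otimes\Phi_j\Bigr)^{k}\Bigr\|,
\]
and the $k$-th root of the right side tends to $\rho\bigl(\sum_j Y_j\otimes\Phi_j\bigr)$, not to $\bigl\|\sum_j Y_j\otimes\Phi_j\bigr\|$. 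Choosing $\Phi$ norm-attaining controls only the norm of the pairing, so no constant $c>1$ comes out.

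The ``supermultiplicativity'' you hope for is precisely the statement to be proved. It fails in general, for instance for jointly nilpotent tuples such as your block example. The weighted-shift idea is unspecified, and if $\|X^{\odot k}\|$ is comparable to $\|Y^{\odot k}\|$ it simply returns the same question for $Y$. A smaller point: $\rho_{\min(V)}$ is computed with Haagerup powers of $\min(V)$, not $\otimes_{\min}$ powers (those give $\rho^{\min}$). Your upper bound $\|Y^{\odot k}\|\le\|Y\|^k$ still survives.

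The missing idea is selfadjointness. If $\cE$ is a selfadjoint operator space and $H\in M_n(\cE)$ is selfadjoint, the $*$-homomorphism $C^*_{\max}(\cE)\to C^*(\cE)$ gives $\|H\|_\cE=\rho_{C^*(\cE)}(H)\le\rho_\cE(H)\le\|H\|_\cE$. Hence $\rho_\cE\left(\begin{smallmatrix}0&c\\ c^*&0\end{smallmatrix}\right)=\|c\|_\cE$ (Lemma~\ref{lem: radius=norm}). This is exactly what makes the relevant element selfadjoint, so that norm equals spectral radius.

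A general $V$ is not selfadjoint, so the paper transfers from $\ell^2_d$. It takes a John isomorphism $u\colon V\to\ell^2_d$ with $\|u\|\|u^{-1}\|\le\sqrt d$. It then uses $\|u\|_{\mathrm{cb}}=\|u\|$ between the $\min$ (resp.\ $\max$) structures, together with $\rho_{\cE_2}(T(x))\le\|T\|_{\mathrm{cb}}\,\rho_{\cE_1}(x)$.

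This transfer costs a factor $\sqrt d$, so your qualitative black box $\min(V)\ne\max(V)$ is too weak. What is needed is the quantitative bound $\alpha(\ell^2_d)>\sqrt d$ for $d\ge3$, e.g.\ $\alpha(\ell^2_d)\ge(d+1)/2$ for odd $d$ and $\ge\sqrt{d^2+2d}/2$ for even $d$. Choose $c\in M_n(\ell^2_d)$ with $\|c\|_{\max}>\sqrt d\,\|c\|_{\min}$ and set $X=u^{-1}\left(\begin{smallmatrix}0&c\\ c^*&0\end{smallmatrix}\right)\in M_{2n}(V)$. Then
\[
\rho_{\min(V)}(X)\le\|u^{-1}\|\,\|c\|_{\min}<\|u\|^{-1}\|c\|_{\max}\le\rho_{\max(V)}(X).
\]
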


We know by the example of the column and row spaces that equality of the spectral radii for all matrix tuples does not imply that $\cE_1 = \cE_2$. We leave the following question open. 
\begin{quest}\label{quest:rho_E_1_2}
    Let $\cE_1$ and $\cE_2$ be operator space structures on $\C^d$ such that for every $d$-tuple of operators $X$, $\rho_{\cE_1}(X) = \rho_{\cE_2}(X)$. Must the identity map be completely isometric?
\end{quest}

The noncommutative open unit ball $\bB_\cE$ of $\cE$ consists of all matrices $X \in M_m(\cE)$ such that $\|X\|_{M_n(\cE)}<1$, for all $n \in \bN$ (we write $\bB_\cE^{(\infty)}$ for the analogous ball with an additional level at $n = \infty$). 
In terms of noncommutative complex analysis, the logarithm of the joint spectral radius is analogous to a plurisubharmonic exhaustion function of the open unit ball $\bB_{\cE}$ of $\cE$. We can define  the topological algebra $\cO(\bB_{\cE})$ of all NC analytic functions of bounded type on $\bB_{\cE}$ (see Section \ref{sec:rho_E_F} for more details). 
\begin{thmA}[Theorem \ref{thm:spr_OBE}]
    Let $\cE_1$ and $\cE_2$ be two operator spaces such that for every $d$-tuple of operators $X$, $\rho_{\cE_1}(X) = \rho_{\cE_2}(X)$. Then, $\cO(\bB_{\cE_1}) = \cO(\bB_{\cE_2})$ as sets and the identity map is a complete homeomorphism.
\end{thmA}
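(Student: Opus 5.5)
The plan is to compare the two algebras through the homogeneous expansion of NC functions. The equality of spectral radii, combined with the similarity theorem (Theorem \ref{thm:SS25}), should then turn a bound on one ball into a bound on the other. I take $\cO(\bB_\cE)$ to consist of NC functions on $\bB_\cE$ that are bounded on every ball $r\bB_\cE$ with $r<1$, topologized by the seminorms $\|f\|_r=\sup_{X\in r\bB_\cE}\|f(X)\|$, with matrix levels handled the same way. By symmetry it suffices to prove the following: for every $0<s<1$ there exist $r<1$ and $C>0$ such that every $f\in\cO(\bB_{\cE_1})$ satisfies $\|f\|_{s,\cE_2}\le C\|f\|_{r,\cE_1}$. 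The same estimate, applied to $M_k$-valued functions, should give both the set equality and the complete homeomorphism.

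\emph{Step 1 (homogeneous expansion).} I expand $f=\sum_n f_n$, where each $f_n$ is a homogeneous NC polynomial of degree $n$ obtained from $f$ by the usual Cauchy-type formula $f_n(X)=\frac{1}{2\pi}\int_0^{2\pi} f(e^{i\theta}X)e^{-in\theta}\,d\theta$. If $\|f\|_{r,\cE_1}\le M$, then $f_n$ is bounded by $M r^{-n}$ on $\bB_{\cE_1}$, and by homogeneity it is bounded by $M(t/r)^n$ on $t\bB_{\cE_1}$. Each $f_n$ is a polynomial, so it extends to operator tuples at level $\infty$ with the same bounds, by compressing to finite-dimensional subspaces. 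It also intertwines similarities exactly: $f_n(S^{-1}XS)=S^{-1}f_n(X)S$.

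\emph{Step 2 (uniform similarity via one large direct sum).} Fix $s<t<r<1$. I form a single operator tuple $X_0$ as the direct sum of a countable dense set of points of $s\ol{\bB_{\cE_2}}$ taken over all matrix levels. Then $\|X_0\|_{\cE_2}\le s$, so $\rho_{\cE_1}(X_0)=\rho_{\cE_2}(X_0)\le s<t$. Applying Theorem \ref{thm:SS25} to $X_0/t$ produces an invertible $S$ with $S^{-1}X_0S\in t\bB^{(\infty)}_{\cE_1}$. The point of the construction is that one constant $K=\|S\|\|S^{-1}\|$ works simultaneously for every summand, and hence, by density and continuity of polynomials, for every point of $s\bB_{\cE_2}$. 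For $n\ge 0$ this gives $\|f_n(X)\|\le K M (t/r)^n$ on $s\bB_{\cE_2}$.

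\emph{Step 3 (conclusion).} Summing the bounds from Step 2, the series $\sum f_n$ converges uniformly on $s\bB_{\cE_2}$ with
\[
\|f\|_{s,\cE_2}\le \frac{K}{1-t/r}\,\|f\|_{r,\cE_1}.
\]
The sum is an NC function that agrees with $f$ near $0$, so $f$ extends to $\bB_{\cE_2}$ and lies in $\cO(\bB_{\cE_2})$. The same argument with matrix-valued coefficients, the compression being taken entrywise, gives the estimate at all matrix levels with the same constant. This yields complete continuity of the identity map, and exchanging $\cE_1$ and $\cE_2$ gives the reverse direction.

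I expect the main obstacle to be the justification in Step 1. I need the expansion $f=\sum f_n$ to be valid on all of $\bB_{\cE_2}$, and not merely on a small common neighborhood of $0$, in order to identify the extension with $f$ on $\bB_{\cE_1}\cap\bB_{\cE_2}$. I would handle this with the standard fact that an NC function on a circular NC domain is determined by, and given by, its homogeneous expansion on $r\bB$ for every $r<1$. Connectedness of the balls then makes the two functions agree.
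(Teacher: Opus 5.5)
Your proposal is correct and is essentially the paper's argument. Where the paper applies the hypothesis to the universal tuple $rZ^{(2)}$ (using Lemma \ref{lem:specrad_homo} and Lemma \ref{lem:specrad_strict} to get a completely bounded evaluation on $A(s\bB_{\cE_1})$), you use a direct sum of a dense subset of $s\ol{\bB_{\cE_2}}$, which plays exactly the same role; both arguments then combine the resulting uniform similarity constant with Cauchy estimates on the homogeneous components. The identification issue you flag at the end is harmless, since $f$ and the new function both equal $\sum_n f_n$ pointwise on $\bB_{\cE_1}\cap\bB_{\cE_2}$.
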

However, the equality of the spectral radii on all {\em operator} tuples is not a necessary condition, as Example \ref{ex:insufficent_OBE} shows. Therefore, it seems natural that the topological equality $\cO(\bB_{\cE_1}) = \cO(\bB_{\cE_2})$ is equivalent to the fact that $\rho_{\cE_1}(X) = \rho_{\cE_2}(X)$ for all $d$-tuples of matrices of all sizes.

Lastly, we extend the results of \cite{JMS-rat} and \cite{shalsham2025spectral}, which connect the spectral radius of a tuple $A$ to the domain of invertibility of the corresponding linear pencil $L_A(X) = I - \sum_j X_j \otimes A_j$, to the case of operator realizations as constructed in \cite{augat2025operator}. This is a key ingredient in our proof of the operator version of Theorem \ref{thm:B} (Theorem \ref{thm:rho_E1E2}). 

\begin{thmA}[Theorem \ref{thm:rVSdom}]
    Given an operator space structure $\cE$ on $\bC^d$, let $E = \cE^*$ denote the dual operator space structure. 
    For $A \in B(\cH)^d$ and $r > 0$, the following are equivalent:
    \begin{enumerate}
        \item $\rho_E(A) < r^{-1}$. 
        \item $R\bB_\cE^{(\infty)}\subset \dom^{(\infty)}(L^{-1}_A)$ for some $R > r$. 
        \item The similarity envelope of $R\bB^{(\infty)}_\cE$ is contained in $\dom^{(\infty)}(L^{-1}_A)$ for some $R > r$.
        \item For some $R > r$ the pencil $L_A$ is invertible in $A(R\bB_\cE) \otimes B(\cH)$. 
    \end{enumerate}
    In particular, for all $X \in B(\cK)^d$, if $\rho_\cE(X) \rho_E(A) < 1$ then $X \in \dom(L_A^{-1})$.
\end{thmA}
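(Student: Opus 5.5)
The plan is to expand $L_A^{-1}$ as a Neumann series and read each homogeneous part as a Haagerup-tensor pairing between $\cE$ and $E=\cE^*$. Throughout I use the description of $\rho_E$ from \cite{shalsham2025spectral}: view $A$ as the element $\sum_j e_j\otimes A_j$ of $E\otimes B(\cH)$, and write $A^{(n)}=\sum_{|\alpha|=n}e_\alpha\otimes A^\alpha$, where $e_\alpha=e_{\alpha_1}\otimes\cdots\otimes e_{\alpha_n}$, for its $n$-th power in $E^{\otimes_h n}\otimes B(\cH)$. Then $\rho_E(A)=\lim_n\|A^{(n)}\|^{1/n}$. Similarly, for $X\in B(\cK)^d$ the element $X^{(n)}=\sum_{|\alpha|=n} e_\alpha\otimes X^\alpha$ satisfies $\|X^{(n)}\|_{\cE^{\otimes_h n}}\le \|X\|_{\cE}^n$, by the multiplicativity of the Haagerup norm. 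The basic identity is
\[
\Big(\sum_j X_j\otimes A_j\Big)^n=\sum_{|\alpha|=n}X^\alpha\otimes A^\alpha .
\]
This is the image of $X^{(n)}\otimes A^{(n)}$ under the canonical pairing. In finite dimensions $(\cE^{\otimes_h n})^*=E^{\otimes_h n}$ completely isometrically, so the pairing is completely contractive, giving
\[
\Big\|\sum_{|\alpha|=n}X^\alpha\otimes A^\alpha\Big\|\le \|X^{(n)}\|\,\|A^{(n)}\| .
\]

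\emph{(1)$\Rightarrow$(4).} If $\rho_E(A)<r^{-1}$, I pick $R>r$ with $R\rho_E(A)<1$. For $X\in R\bB_\cE$ (at all levels, including $\infty$), the estimate above gives $\|(\sum X_j\otimes A_j)^n\|\le R^n\|A^{(n)}\|$, and this is summable. So the Neumann series for $L_A(X)^{-1}$ converges uniformly on $R\bB_\cE$, and its partial sums are NC polynomials with coefficients in $B(\cH)$. Hence $L_A$ is invertible in $A(R\bB_\cE)\otimes B(\cH)$.

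\emph{(4)$\Rightarrow$(3)$\Rightarrow$(2).} For (4)$\Rightarrow$(3), note that if $Y=SXS^{-1}$ then $L_A(Y)=(S\otimes I)L_A(X)(S^{-1}\otimes I)$. So invertibility of $L_A$ on $R\bB^{(\infty)}_\cE$ passes to its similarity envelope. The implication (3)$\Rightarrow$(2) is trivial.

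\emph{(2)$\Rightarrow$(1)} is the main obstacle. First, I will choose a ``universal'' tuple $T\in B(\cK)^d$ in the closed unit ball of $\cE$ such that $\|T^{(n)}\|$ computes the Haagerup norm. Precisely, for every $n$ the map $e_\alpha\mapsto T^\alpha$ should extend to a complete isometry of $\cE^{\otimes_h n}$ into $B(\cK)$. Such a $T$ is supplied by the Christensen--Effros--Sinclair--Pisier embedding of Haagerup tensor products into free products of $C^*$-algebras: take $T$ to be the image of a completely isometric copy of $\cE$ inside an infinite free product. Then by the duality above,
\[
\Big\|\sum_{|\alpha|=n}T^\alpha\otimes A^\alpha\Big\|=\|A^{(n)}\|_{E^{\otimes_h n}} .
\]

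Second, I fix $r<r'<R$. For $|z|\le r'$ the point $zT$ lies in $R\bB^{(\infty)}_\cE$, so $F(z)=L_A(zT)^{-1}$ is defined on a neighbourhood of the closed disc of radius $r'$. It is holomorphic there, since inversion is analytic, and it is bounded on the compact circle $|z|=r'$ by continuity. Its Taylor coefficients are exactly $\sum_{|\alpha|=n}T^\alpha\otimes A^\alpha$. Cauchy's estimates give $\|A^{(n)}\|\le M r'^{-n}$, hence $\rho_E(A)\le 1/r'<1/r$.

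The delicate points are the existence of $T$ with the exact Haagerup-norm property, and checking that the level-$\infty$ ball really contains $zT$. If the free-product embedding is awkward, I would instead take a direct sum of matrix points dense in the ball. This only yields $\sup$-norms, which suffice because the Haagerup norm of $\cE^{\otimes_h n}$ is attained on finite matrix levels.

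\emph{Final assertion.} Suppose $\rho_\cE(X)\rho_E(A)<1$. Choose $r$ with $\rho_\cE(X)<r$ and $\rho_E(A)<r^{-1}$. By (1)$\Rightarrow$(3), $L_A$ is invertible on the similarity envelope of $R\bB^{(\infty)}_\cE$ for some $R>r$. By Theorem~\ref{thm:SS25} applied to $X/R$, whose spectral radius is less than $1$, the tuple $X$ is jointly similar to a point of $R\bB^{(\infty)}_\cE$. Hence $X\in\dom(L_A^{-1})$.
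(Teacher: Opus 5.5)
There is a genuine gap in (2)$\Rightarrow$(1), the step you yourself flagged: the universal tuple $T$ is attached to the wrong operator space. You ask that $e_\alpha\mapsto T^\alpha$ be a complete isometry on $\cE^{\otimes_h n}$, so that $T$ is a completely isometric copy of $\cE$, and also that $\|T\|_\cE\le 1$.

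These two requirements are generally incompatible. If $e_j\mapsto T_j$ is completely isometric on $\cE$, then $\|T\|_\cE=\|\sum_j Q_j\otimes Q_j\|$. For the row space this equals $\sqrt d$; an example is $T=(L_1^*,\dots,L_d^*)$.

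More importantly, even granting such a $T$, a complete isometry on $\cE^{\otimes_h n}$ gives
\[
\Big\|\sum_{|\alpha|=n}T^\alpha\otimes A^\alpha\Big\|=\Big\|\sum_{|\alpha|=n}e_\alpha\otimes A^\alpha\Big\|_{\cE^{\otimes_h n}\otimes_{\min}B(\cH)} .
\]
The $n$-th roots of this converge to $\rho_\cE(A)$, not to $\rho_E(A)$. For $\cE$ the row space and $A=L$ these are $1$ and $\sqrt d$ respectively (Example \ref{ex:row_col_equal}). Your pairing with $E^{\otimes_h n}=(\cE^{\otimes_h n})^*$ only yields the upper bound $\|T^{(n)}\|\,\|A^{(n)}\|_{E^{\otimes_h n}}$. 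For equality you need a tuple in the closed unit ball of $\cE$ whose words realize the \emph{dual} power $E^{\otimes_h n}$ completely isometrically. Note also that the Christensen--Effros--Sinclair--Pisier embedding puts the $n$ tensor factors into $n$ different free factors, so it does not by itself produce words in a single tuple.

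The right object is the coordinate tuple $Z$. It generates $E$ and satisfies $\|\sum_j Z_j\otimes Q_j\|=1$ (cf.\ Lemma \ref{lem:specrad_homo}). Take $Z_j$ to be the canonical generators of $C^*_{\max}(E)$, which is separable and so can be represented on $\cK$. Then $\rho(\sum_j Z_j\otimes A_j)=\rho_E(A)$ holds by the very definition of $\rho_E$. Your holomorphy idea then finishes at once, with no Cauchy estimates and no Haagerup identifications. Indeed, $tZ\in R\bB^{(\infty)}_\cE$ for $|t|<R$, so $1-t\sum_j Z_j\otimes A_j$ is invertible for all such $t$. Hence $\sigma(\sum_j Z_j\otimes A_j)\subset R^{-1}\ol{\bD}$, so $\rho_E(A)\le R^{-1}<r^{-1}$. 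This is exactly the paper's argument.

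Your fallback, a direct sum of a dense sequence in $\bB_\cE$, also works. However, the reason is Pisier's identification of the degree-$n$ part of $A(\bB_\cE)\cong\oa(E)$ with $E^{\otimes_h n}$, i.e.\ $\sup_{X\in\bB_\cE}\|\sum_{|\alpha|=n}X^\alpha\otimes A^\alpha\|=\|A^{(n)}\|_{E^{\otimes_h n}\otimes_{\min}B(\cH)}$. It is not a fact about $\cE^{\otimes_h n}$.

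A smaller point concerns (4)$\Rightarrow$(3). The similarity identity alone does not carry invertibility in $A(R\bB_\cE)\otimes B(\cH)$ to operator-level points. You also need that level-$\infty$ points of $R\bB^{(\infty)}_\cE$ induce contractive evaluations on $A(R\bB_\cE)$; this is the compression argument of Lemma \ref{lem:specrad_strict}.

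Your (1)$\Rightarrow$(4) via the pairing inequality and your derivation of the final assertion are fine. They are a valid alternative to the paper's direct use of Theorem \ref{thm:SS25} with $\|\sum_j a_j\otimes b_j\|\le\|a\|_\cE\|b\|_E$.
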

Significantly, the operator level is necessary for this theorem to hold (see Example \ref{ex:not_persist}). In fact, the function constructed in this example is analogous to the example of a freely entire NC function that is unbounded on the unit row ball constructed by Pascoe \cite{Pascoe20}. 

\subsection{Definitions and notation}

Let $M_n$ denote the $n \times n$ matrices over $\bC$. 
For an operator space $\cE$, we let $M_n(\cE) = M_n \otimes \cE$ and define $\bM(\cE) = \cup_{n=1}^\infty M_n(\cE)$. We remind the reader that an {\em operator space} $\cE$ is just a subspace of a C*-algebra with the inherited norm. 
This implies that $M_n(\cE)$ too is an operator space.

Although our primary interest is finite dimensional operator spaces, we shall also treat general operator spaces when this doesn't complicate the arguments too much. We shall identify a $d$-dimensional operator spaces as $\bC^d$ equipped with a particular operator space structure (i.e., a family of matrix norms), in which $M_n(\bC^d) = M_n^d$ is the set of all $d$-tuples of scalar matrices, and we then write $\bM^d$ for $\bM(\bC^d) = \cup_{n=1}^\infty M_n^d$. 
A subset $\Omega \subset \bM(\cE)$ is said to be an {\em NC set} if it is closed under direct sums. 
We write $\Omega_n$ or $\Omega(n)$ for $\Omega \cap M_n(\cE)$, so $\Omega = \cup_{n=1}^\infty \Omega_n$. 
The {\em similarity envelope} $\widetilde{\Omega}$ of an NC set $\Omega$ consists of all tuples $S^{-1} X S := (S^{-1}X_1S, \ldots, S^{-1}X_d S)$ where $X \in \Omega$, that is
\begin{equation}\label{eq:sim_env}
\widetilde{\Omega} = \bigcup_{n=1}^\infty\left\{S^{-1} X S : X \in \Omega_n, \,\, S \in \GL_n \right\}.
\end{equation}
An {\em NC domain} is an open and levelwise connected NC set. 

Whenever we use topological notions without further specification, we shall mean the disjoint union topology. 
For example, we write $\ol{\Omega} := \cup_n \ol{\Omega}_n$. 
There are other topologies of interest; most relevant for us is the {\em uniform} topology given on $\bM(\cE)$ when $\cE$ is an operator space, which is the topology generated by open balls of the form 
\[
B(X, r) = \cup_{n=1}^d \{Y \in \bM(\cE) : \|Y - \oplus_{k=1}^n X\|<r\} 
\]
for $X \in \bM(\cE)$ and $r>0$. 

Every operator space $\cE$ gives rise to the {\em operator space ball} (or {\em operator ball}, for short) $\bB_\cE$
\begin{equation}\label{eq:op_ball}
\bB_\cE = \cup_{n=1}^\infty \{X \in M_n(\cE) : \|X\|_n<1\}.
\end{equation}

When $\cE = \bC^d$ equipped with a particular operator space structure given by the inclusion $e_j \mapsto Q_j$ mapping the standard basis vector $e_j \in \bC^d$ to an operator $Q_j \in B(\cH)$, then we can think of $\bB_\cE$ as a subset of $\bM^d$ as follows
\[
\bB_\cE = \left\{X \in \bM^d : \|X\|_n = \left\|\sum_{j=1}^d X_j \otimes Q_j  \right\| < 1\right\} .
\]
Following \cite{BMV-ncrkhs,BMV-int,SampatShalit25,sampat2025weak}, when considering such balls we sometimes use the notation $\bD_Q$ (a notation meant to indicate ``the unit disc determined by $Q$") instead of $\bB_\cE$, to highlight the depends on the choice of coordinates, in other words the dependence on $Q_1, \ldots, Q_d$. 
We shall write $\bD_Q^{(\infty)} = \bD_Q \sqcup \bD_Q(\infty)$, where 
\[
\bD_Q(\infty) = \left\{X \in B(\ell^2)^d : \left\|\sum_{j=1}^d X_j \otimes Q_j \right\| < 1 \right\}
\]
can be considered to be an ``operator level" added to all the matrix levels in $\bD_Q$. 
Similarly, we use the notation $\bB_\cE^{(\infty)}$ to denote the disjoint union of $\bB_\cE$ with an added operator level --- the open unit ball of $B(\ell^2) \otimes_{\min} \cE$.

It will be useful to have a couple of concrete examples at hand. 
The {\em row ball} $\fB_d$, which is defined to be $\bB_\cE$ for $\cE=\bC^d$ with the row operator space structure, is given by
\begin{equation}\label{eq:Ball}
\fB_d := \bB_{(\bC^d)_{\operatorname{row}}} = \{ X \in \bM^d : \|X\|_{\operatorname{row}} := \|\sum X_j X_j^*\|^{1/2} < 1\} .
\end{equation}
Likewise, one defines the column operator space $(\bC^d)_{\operatorname{col}}$ and the corresponding {\em column ball}. 
When $\cE = \min(\ell^\infty_d)$ is the minimal operator space over $\ell^\infty_d$ we get the {\em NC polydisc}
\[
\fD_d := \bB_{\min(\ell^\infty_d)} = \{X \in \bM^d : \|X\|_\infty := \max_{1 \leq i \leq d} \|X_i\| < 1\}. 
\]

A function $f$ from an NC set $\Omega \subset \bM(\cE)$ to $\mathbb{M}(\cF)$ is said to be an {\em NC function} if: (i) $f$ is graded: $X \in \Omega_n \Rightarrow f(X) \in M_n(\cF)$; (ii) $f$ respects direct sums: $f(X \oplus Y) = f(X) \oplus f(Y)$; and (iii) $f$ respects similarities: if $X \in \Omega_n$, $S \in GL_n$, and if $S^{-1} X S \in \Omega_n$, then $f(S^{-1} X S) = S^{-1} f(X) S$. 

We let $\bC\langle z \rangle = \bC \langle z_1, \ldots, z_d \rangle$ denote the algebra of free polynomials in $d$ noncommuting variables, also referred to as the {\em free algebra}. 
A more sophisticated class of NC functions is provided by {\em NC rational functions} \cite{helton2018applications,JMS-rat,KVV-rat,mai2018free,Volcic}. 
Remarkably, every bounded NC function is {\em analytic} and has a Taylor series at every point; in fact, for an NC function in an operator ball, the series around the origin converges in the entire ball \cite{KVV}.

We define $H^\infty(\bB_\cE)$ to be the algebra of bounded NC functions on $\bB_\cE$ equipped with the supremum norm
\[
\|f\| = \|f\|_\infty := \sup_{X \in \bB_\cE}\|f(X)\|, 
\] 
and define $A(\bB_\cE)$ to be the closure of polynomials in $H^\infty(\bB_\cE)$. 
By \cite{SampatShalit25}, $A(\bB_\cE)$ equals the set of bounded NC functions on $\bB_\cE$ that extend to uniformly continuous functions on $\ol{\bB_\cE}$. 

For an operator space $E$, we let $\oa(E)$ denote the universal unital operator algebra generated by $E$ (see \cite[Chapter 6]{Pisier-book}). 
In \cite[Section 7.1]{SampatShalit25} it was shown that for a finite dimensional operator space $E$, one has the completely isometric isomorphism $\oa(E) \cong A(\bB_{E^*})$. 
In fact, the same argument works to show this for general operator spaces (in \cite{SampatShalit25} it is also true that $\oa(\cE^*) \cong A(\bB_{\cE})$ because of finite dimensionality).

\section{Operator space structures and spectral radii}

\subsection{The joint spectral radius associated with an operator space}

In \cite{shalsham2025spectral}, for every operator space $\cE$, a spectral radius $\rho_\cE$ was defined for matrices over $\cE$, and more generally, for elements in $B(\cK) \otimes_{\min} \cE$. 
Recall that if $X \in B(\cK) \otimes_{\min} \cE$, then
\begin{equation}\label{eq:spec_rad}
    \rho_\cE(X) := \rho(\iota_\cE(X)), 
\end{equation}
where $\iota_\cE$ is shorthand for the ampliation ${\bf id}_{B(\cK)} \otimes\iota_\cE$ of the inclusion map $\iota_\cE \colon \cE \to \cmax(\cE)$. 
That is, $\rho_\cE(X)$ is defined to be the spectral radius $\rho(\iota_\cE(X))$ of the element $\iota_\cE(X)$ in the C*-algebra $B(\cK) \otimes_{\min} \cmax(\cE)$.

When considering tuples of operators it is convenient to have a concrete, coordinate dependent version of the above spectral radius. 
We will view a finite dimensional operator space $\cE$ concretely as the space $\bC^d$ endowed with a family of matrix norms. 
Every such identification determines a family of norms on $d$-tuples of operators. 
Letting $Q_1, \ldots, Q_d \in B(\cH)$ be the image of the standard basis vectors $e_1, \ldots, e_d$ under a completely isometric embedding of $\bC^d$ into $B(\cH)$, we define for $X \in B(\cK)^d$ the spectral radius as 
\[
\rho_\cE(X) = \rho_Q(X) := \rho\left(\sum_j X_j \otimes Q_j\right)
\]
where the spectral radius on the right is calculated in $B(\cK) \otimes_{\min} \cmax(\cE)$ as above. 
In \cite{shalsham2025spectral} it was shown that
\begin{equation}\label{eq:Hsr}
\rho_Q(X) = \lim_{n \to \infty} \left \|\sum_{|w|=n} X^w \otimes Q_{w_1} \otimes_h Q_{w_2} \otimes_h \cdots \otimes_h Q_{w_n} \right\|^{1/n} 
\end{equation}
where the sum is over all words $w = w_1 w_2 \cdots w_n$ in $d$ letters $w_i \in \{1,2,\ldots, d\}$.
The tensors in \eqref{eq:Hsr} are to be understood as elements in $B(\cK) \otimes \left(\cE \otimes_h \cdots \otimes_h \cE\right)$, where $\cE \otimes_h \cdots \otimes_h \cE$ is the $n$th-fold Haagerup tensor product of $\cE$ with itself. 
Note that when $d = 1$ and $\cE = \bC$ with the standard operator space structure, formula \eqref{eq:Hsr} reduces to the spectral radius formula.

For $S \in B(\cH)$ invertible, we can consider the map $\Ad_S \otimes \id \colon B(\cH) \otimes_{\min} \cE \to B(\cH) \otimes_{\min} \cE$. For $X \in B(\cH) \otimes_{\min} \cE$ we will write $S^{-1} X S = (\Ad_S \otimes \id)(X)$. Clearly, $\rho_{\cE}(X) = \rho_{\cE}(S^{-1} X S)$.

The main result about the joint spectral radius associated with an operator space is Theorem 2.7 in \cite{shalsham2025spectral}, which can be restated as follows.

\begin{theorem}[\cite{shalsham2025spectral}]\label{thm:SS25}
    Let $\cE$ be an operator space and let $X \in B(\cH) \otimes \cE$. Then $\rho_\cE(X)<1$ if and only if there exists an invertible $S \in B(\cH)$ such that $\left\| S^{-1} X S \right\| < 1$. 
\end{theorem}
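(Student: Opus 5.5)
The plan is to work with the element $T := \iota_\cE(X) \in B(\cH) \otimes_{\min} \cmax(\cE)$. The spectral radius $\rho_\cE(X) = \rho(T)$ is computed in the C*-algebra $B(\cH) \otimes_{\min} \cmax(\cE)$. Since $\iota_\cE$ is completely isometric, $\|S^{-1}XS\| = \|(S^{-1}\otimes 1) T (S\otimes 1)\|$.

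\emph{The easy direction.} Suppose $\|S^{-1}XS\|<1$. By similarity invariance,
\[
\rho_\cE(X) = \rho_\cE(S^{-1}XS) \le \|\iota_\cE(S^{-1}XS)\| = \|S^{-1}XS\| < 1 .
\]

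\emph{The converse.} Assume $\rho(T)<1$. The classical Rota-type construction gives a similarity in the whole algebra $B(\cH)\otimes_{\min}\cmax(\cE)$. The difficulty is that we need a similarity of the special form $S\otimes 1$ with $S \in B(\cH)$. I would instead use Paulsen's similarity theorem together with the universal property of $\oa(\cE)$.

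Since $\rho(T)<1$, there is $N$ with $\|T^n\| \le C r^n$ for some $r<1$. Consider the unital homomorphism $\pi \colon \oa(\cE)\to B(\cH)$ sending the generators to $X$; I use the free operator algebra picture, in which $\oa(\cE)$ is the completion of the tensor algebra $\bigoplus_n \cE^{\otimes_h n}$. The key identity, taken from the formula \eqref{eq:Hsr} in the form proved in \cite{shalsham2025spectral}, is that the norm of the degree-$n$ part of $X$ in $B(\cH)\otimes (\cE^{\otimes_h n})$ equals $\|T^n\|$ (or is at least comparable to it).

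\emph{Complete boundedness.} Next I would show that $\pi$ is completely bounded. An element of $M_k(\oa(\cE))$ decomposes into homogeneous parts of degree $n$. Let $X^{(n)}$ denote the $n$-fold ``product'' of $X$. The ampliation $\pi^{(k)}$ acts on the degree-$n$ part $P_n \in M_k(\cE^{\otimes_h n})$ by pairing it with $X^{(n)}$. By the Christensen--Sinclair/Paulsen--Smith representation of the Haagerup tensor norm, this pairing is bounded by $\|X^{(n)}\|\,\|P_n\|$ in the completely bounded sense. Summing over $n$ with the geometric bound $\|X^{(n)}\|\le C r^n$ gives a finite cb-norm. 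Projecting onto homogeneous components is completely contractive, via the circle gauge action $z\mapsto e^{i\theta}z$ averaged against $e^{-in\theta}$. Hence
\[
\|\pi\|_{cb} \le \sum_n C r^n < \infty .
\]

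\emph{The similarity.} Paulsen's theorem now produces an invertible $S$ with $\|S^{-1}\pi(\cdot)S\|_{cb}\le 1$. To get strict inequality, I apply the argument to $X/r'$ with $\rho(T)<r'<1$. This gives $\|S^{-1}XS\|\le r'<1$, because the generator level of $\oa(\cE)$ is $\cE$ completely isometrically, so $\|S^{-1}XS\| = \|(\id\otimes S^{-1}\pi S)(\text{generator tuple})\|$.

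\emph{The main obstacle.} The hard step is justifying that the degree-$n$ norms are controlled by $\|T^n\|$. That is, the norm of $\sum_{|w|=n}X^w\otimes Q_{w_1}\otimes_h\cdots\otimes_h Q_{w_n}$ must be dominated by $\|(\sum X_j\otimes Q_j)^n\|$ computed in $\cmax(\cE)$. I would obtain this from the fact that the multiplication map $\cE^{\otimes_h n}\to \cmax(\cE)$ is a complete isometry onto its range, using Pisier's results on $\oa(\cE)\subset \cmax(\cE)$ and the tensor algebra, which is exactly \eqref{eq:Hsr}.
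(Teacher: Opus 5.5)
\textbf{There is a genuine gap: you dualize the wrong way.} A unital homomorphism on $\oa(\cE)$ is determined by a linear map $\cE\to B(\cH)$. The cb norm of $e_j\mapsto X_j$ is the norm of $X$ in $B(\cH)\otimes_{\min}\cE^*$, not in $B(\cH)\otimes_{\min}\cE$.

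The same happens in every degree. The degree-$n$ component of your $\pi$ is the map $\cE^{\otimes_h n}\to B(\cH)$, $e_{w_1}\otimes\cdots\otimes e_{w_n}\mapsto X^w$. Its cb norm is the norm of $X^{(n)}$ in $B(\cH)\otimes_{\min}(\cE^*)^{\otimes_h n}$, which is governed by $\rho_{\cE^*}(X)$ and not by $\|T^n\|$. The ``pairing'' of $P_n\in M_k(\cE^{\otimes_h n})$ with $X^{(n)}\in B(\cH)\otimes\cE^{\otimes_h n}$ that you bound via Christensen--Sinclair is not contractive. Haagerup duality pairs $\cE^{\otimes_h n}$ with $(\cE^*)^{\otimes_h n}$, not with itself.

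Here is a concrete failure. Let $\cE=(\ell^2_d)_{\operatorname{row}}$ with $d\geq 2$, and let $X=cV$ with $V$ a row isometry and $1/\sqrt{d}<c<1$. Then $\rho_\cE(X)=c<1$. But $\cE^{\otimes_h n}$ is the row space on $d^n$ generators, so the degree-$n$ part of your $\pi$ has cb norm $\|\sum_{|w|=n}X^{w*}X^w\|^{1/2}=(c\sqrt{d})^n\to\infty$. Hence $\pi$ is not completely bounded. Indeed, if it were, Paulsen's theorem would make $X$ similar to a column contraction, contradicting $\rho_{\operatorname{col}}(X)=c\sqrt{d}>1$. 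Even where your argument does run, its last step yields $\|S^{-1}XS\|_{\cE^*}\leq 1$, not the norm in $B(\cH)\otimes_{\min}\cE$.

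\textbf{The repair is the paper's route.} Let $X$ act on the dual space, $g\mapsto(\id\otimes g)(X)$, which gives a homomorphism $\varphi_X\colon\oa(\cE^*)\to B(\cH)$.
\begin{itemize}
\item Its degree-$n$ part lives on $(\cE^*)^{\otimes_h n}$ (Pisier, Prop.~6.6).
\item The complete isometry $B(\cH)\otimes_{\min}\cE^{\otimes_h n}\hookrightarrow\operatorname{CB}((\cE^*)^{\otimes_h n},B(\cH))$ then gives exactly the bound by $\|X^{(n)}\|$ that you wanted.
\item For infinite-dimensional $\cE$, the paper first passes to $\cE^{**}$. It checks $\rho_\cE=\rho_{\cE^{**}}$ via injectivity of $\cmax(\cE)\to\cmax(\cE^{**})$, so that the Blecher--Le Merdy embeddings apply.
\item Paulsen's theorem gives $S$ with $S^{-1}\varphi_X S$ completely contractive. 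Since $B(\cH)\otimes_{\min}\cE\subset\operatorname{CB}(\cE^*,B(\cH))$ completely isometrically, this is precisely $\|S^{-1}XS\|\leq 1$.
\end{itemize}
Your identification of the degree-$n$ Haagerup norm with $\|T^n\|$ via \eqref{eq:Hsr} is fine, so it was not the real obstacle. With the duality corrected, the rest of your outline goes through: the easy direction, geometric summation, gauge averaging, and rescaling by $r'$.
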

In \cite{shalsham2025spectral} the above theorem was established only for the case that $\cE$ is an operator space structure on $\bC^d$. 
With some care, the proof generalizes to the case of not-necessarily-finite dimensional operator spaces, as we now demonstrate.
\begin{proof}
    One direction is clear. Now consider the canonical completely isometric embedding $\cE \hookrightarrow \cE^{**}$. This map induces a homomorphism $\iota \colon C^*_{\max}(\cE) \to C^*_{\max}(\cE^{**})$. Moreover, since every completely contractive map $\cE \to B(\cK)$ extends to a normal completely contractive map $\cE^{**} \to B(\cK)$, we have that if $f \in \ker \iota$, then by the universal property of the maximal $C^*$-algebras, we have that $f$ is annihilated by every representation of $C^*_{\max}(\cE)$ and, thus, $f =0$. Therefore, $\iota$ is injective and since these are $C^*$-algebras, we have that the spectral radius of an element is preserved. 
    The same holds true for the minimal tensor product with $B(\cH)$ by injectivity. In other words, we have proved that $\rho_{\cE}(X) = \rho_{\cE^{**}}(X)$, for all $X \in B(\cH) \otimes_{\min} \cE$. By \cite[Equation  1.35]{BlecherLeMerdy}, we have a completely isometric embedding $B(\cH) \otimes_{\min} \cE^{**} \hookrightarrow \operatorname{CB}(\cE^*, B(\cH))$. More generally, we combine this with \cite[Equation 1.43]{BlecherLeMerdy} to get that
    \[
    B(\cH) \otimes_{\min} (\cE^{**})^{\otimes_h n} \hookrightarrow B(\cH) \otimes_{\min} ((\cE^*)^{\otimes_h n})^* \hookrightarrow \operatorname{CB}((\cE^*)^{\otimes_h n}, B(\cH)).
    \]
    Here all the embeddings are completely isometric. By \cite[Proposition 6.6]{Pisier-book}, $(\cE^*)^{\otimes_h n}$ is completely isometrically isomorphic to the subspace of homogeneous elements of degree $n$ in the universal unital operator algebra $\oa(\cE^*)$. 
    Since $\rho_{\cE}(X) < 1$, it follows that $\|X^n\| \leq r^n$ for some $0 < r < 1$  and for $n$ sufficiently large. 
    Therefore, the map induced by $X$ on $\cE^*$ gives rise to a completely bounded homomorphism $\varphi_X \colon \oa(\cE^*) \to B(\cH)$. By Paulsen's similarity theorem, there is an invertible $S \in B(\cH)$, such that $S^{-1} \varphi_X S$ is completely contractive. Lastly, note that for every $g \in \cE^*$, $\varphi_X(g) = (\id_{B(\cH)} \otimes \operatorname{ev}_g)(X)$. Hence, $S^{-1} \varphi_X(g) S = \varphi_{S^{-1} X S}(g)$. Therefore, $\|S^{-1} X S\| \leq 1$. Now the positive homogeneity of the spectral radius allows us to finish the proof by replacing $X$ by $(1+\epsilon)X$ for some $\epsilon > 0$.
\end{proof}
When thinking of $X$ as a $d$-tuple of operators $X = (X_1, \ldots, X_d) \in B(\cK)^d$, the theorem says that $\rho_\cE(X)<1$ if and only if there exists an invertible operator $S \in B(\cK)$ such that the tuple $S^{-1} XS := (S^{-1} X_1 S, \ldots, S^{-1} X_d S)$ is in the unit ball of $B(\cH) \otimes \cE$, i.e., the operators $X_1, \ldots, X_d$ are simultaneously similar to a point in $\bB_\cE^{(\infty)}$.

\subsection{The spectral radius and representations}
Let $V = (\C^d, \|\cdot\|_V)$ be a Banach space structure on $\C^d$. Recall from \cite{PaulsenBook} that there are minimal and maximal operator space structures on $\C^d$ that quantize $V$. We will denote them by $\min(V)$ and $\max(V)$, respectively. By their universal properties, for every other operator space structure $\cE$ on $\C^d$ quantizing $V$, the identity map gives rise to completely contractive maps 
\[
\max(V) \to \cE \to \min(V). 
\]
In particular, we have that 
\[
\bB_{\max(V)} \subset \bB_{\cE} \subset \bB_{\min(V)}.
\]
Additionally, by the universal properties of the unital operator and maximal $C^*$-algebra of an operator system, we have homomorphisms 
\[
\oa(\max(V)) \to \oa(\cE) \to \oa(\min(V))
\]
and 
\[
\cmax(\max(V)) \to \cmax(\cE) \to \cmax(\min(V)). 
\]
The former have dense range and the latter are $*$-homomorphisms, and thus, are quotient maps. Recall that the sup norm closure of the free algebra on $\bB_{\cE}$ is canonically isomorphic to $\oa(\cE^*)$. Moreover, $\max(V)^* = \min(V^*)$ and $\min(V)^* = \max(V^*)$. In particular, $ A(\bB_{\min(V)}) \cong \oa(\max(V^*))$ and $A(\bB_{\max(V)}) \cong \oa(\min(V^*))$. We summarize this discussion with an application to the spectral radii of tuples. 
We shall write $Z = (Z_1, \ldots, Z_d)$ for the tuple of coordinate functions in $A(\bB_\cE)$. 

In the following lemma, we let $\cE$ be an operator space structure on $\bC^d$ generated by operators $Q_1, \ldots, Q_d$, and let $E = \cE^*$ denote its operator space dual. 
Then the coordinate functions $Z_1, \ldots, Z_d \in A(\bB_\cE)$ generate the operator space structure on $\bC^d$ corresponding to $E$. 
\begin{lemma}\label{lem:specrad_homo}
Let $\cE$, $E$, $Q$ and $Z$ be as above. Then, 
\be\label{eq:claim}
\rho_{E}(Q) = \rho_{\cE}(Z) = 1 = \left\|\sum_k Z_k \otimes Q_k \right\| .
\ee
Consequently, if $\varphi \colon A(\bB_\cE) \to B(\cK)^d$ is a unital completely bounded homomorphism and $X = (X_1, \ldots, X_d) = \varphi(Z) := (\varphi(Z_1), \ldots, \varphi(Z_d))$, 
    then, $\rho_\cE(X) \leq 1$.
\end{lemma}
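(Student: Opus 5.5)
We first establish the norm identity $\|\sum_k Z_k\otimes Q_k\|=1$, then the two spectral radius identities, and finally the consequence for homomorphisms.

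\textbf{The norm identity.} The element $\sum_k Z_k\otimes Q_k$ lies in $A(\bB_\cE)\otimes_{\min} B(\cH)$, and its norm is the supremum over $X\in\bB_\cE$ of $\|\sum_k X_k\otimes Q_k\|$. By the definition \eqref{eq:op_ball} of the operator ball, this supremum equals $\sup_{X\in \bB_\cE}\|X\|<1$ levelwise. Dilating $X$ by scalars $t\uparrow 1$ (e.g.\ $X=tI_1\otimes e$ for a unit vector) shows the supremum is exactly $1$.

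\textbf{The spectral radius identities.} The same bound applies to every power. The $n$-th power of $\sum_k Z_k\otimes Q_k$ is $\sum_{|w|=n} Z^w\otimes Q^w$. Its norm is $\sup_{X\in\bB_\cE}\|(\sum_k X_k\otimes Q_k)^n\|\le 1$, which gives spectral radius at most $1$.

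For the lower bound we need a point where the norm does not decay. The natural choice is the "tautological" point: evaluate at $tQ$ itself, or rather at $X=tQ$ in $\bB_\cE^{(\infty)}$. Since $\|Q\|_\cE=1$, the tuple $tQ$ lies in the ball for $t<1$. Then $\sum_k tQ_k\otimes Q_k$ has spectral radius comparable to $t$, because its compression/evaluation against the identity pairing is a nonnegative operator of norm $t$ in a suitable sense.

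A cleaner route is Theorem~\ref{thm:SS25}. If $\rho_\cE(Z)<1$, then some similarity $S$ would make $\|S^{-1}ZS\|_{\min}<1$. That is impossible, because $\oa(E)\cong A(\bB_\cE)$ completely isometrically, and the canonical element $Z$ in $\oa(E)\otimes\cE$ (the identity map $E\to E$) has norm $1$ under every similarity. Indeed, a scaled copy $(1+\epsilon)Z$ would be similar into the ball and would give a contractive homomorphism of $\oa(E)$ sending the generators to $(1+\epsilon)\times$ themselves, contradicting maximality.

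By symmetry of the pairing, $\rho_E(Q)=\rho_\cE(Z)$: both equal the spectral radius of the same element $\sum Z_k\otimes Q_k$ viewed in $\cmax$ of either side. This step needs \eqref{eq:Hsr}. The Haagerup norms $\|\sum_{|w|=n} Z^w\otimes Q_{w_1}\otimes_h\cdots\otimes_h Q_{w_n}\|$ and the analogous ones with the roles of $Z$ and $Q$ swapped coincide, since $E^{\otimes_h n}$ and $\cE^{\otimes_h n}$ are dual and the canonical tensor has norm $1$ in every level.

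So in fact each Haagerup term equals exactly $1$ (the canonical tensor of a dual pair has cb-norm $1$). This yields $\rho=1$ directly from \eqref{eq:Hsr}, and I would present this as the main argument.

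\textbf{The consequence.} Let $\varphi$ be a completely bounded unital homomorphism with $\|\varphi\|_{cb}=C$. Then $(\varphi\otimes\id)$ applied to the Haagerup tensors gives
\[
\Big\|\sum_{|w|=n}X^w\otimes Q_{w_1}\otimes_h\cdots\otimes_h Q_{w_n}\Big\|\le C\Big\|\sum_{|w|=n}Z^w\otimes Q_{w_1}\otimes_h\cdots\otimes_h Q_{w_n}\Big\|\le C,
\]
using $\varphi(Z^w)=X^w$. Taking $n$-th roots and $n\to\infty$ in \eqref{eq:Hsr} gives $\rho_\cE(X)\le 1$.

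\textbf{Main obstacle.} The delicate point is the exact identification of $\|\sum_{|w|=n}Z^w\otimes Q^{\otimes_h w}\|=1$. This rests on Pisier's identification of degree-$n$ homogeneous elements of $\oa(E)$ with $E^{\otimes_h n}$, combined with the duality $(E^{\otimes_h n})^*\supseteq \cE^{\otimes_h n}$ (for finite dimensions, Blecher–Le Merdy 1.43). The canonical element is then the identity map, with cb-norm $1$.
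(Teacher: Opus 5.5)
\textbf{Gap: the step $\rho_E(Q)=\rho_\cE(Z)$ ``by symmetry of the pairing''.} These are spectral radii of two different elements, not of ``the same element''.

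$\rho_\cE(Z)$ comes from $\sum_k Z_k\otimes\iota_\cE(Q_k)$. Its powers multiply the $Z_k$ inside $A(\bB_\cE)\cong\oa(E)$, where Pisier's identification makes the degree-$n$ products completely isometric to $E^{\otimes_h n}$.

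$\rho_E(Q)$ comes from $\sum_k Q_k\otimes\iota_E(Z_k)\in B(\cH)\otimes_{\min}\cmax(E)$. Its powers involve the operator products $Q^w=Q_{w_1}\cdots Q_{w_n}$ in $B(\cH)$. With the roles swapped, \eqref{eq:Hsr} requires $\|\sum_{|w|=n}Q^w\otimes Z_{w_1}\otimes_h\cdots\otimes_h Z_{w_n}\|$. Multiplication $\cE^{\otimes_h n}\to B(\cH)$ is only completely contractive, so this quantity is at most $1$ but need not equal $1$.

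In fact the equality is false for a general generating tuple. Take $d=1$ and $Q_1=\left(\begin{smallmatrix}0&1\\0&0\end{smallmatrix}\right)$, which embeds $\bC$ completely isometrically. Then $Q_1^2=0$ forces $\rho_E(Q)=0$, while $\rho_\cE(Z)=1$. Similarly, the matrix units $Q_j=E_{1,j+1}$ realize the row space with all products $Q_iQ_j=0$. So $\rho_E(Q)$ depends on the tuple $Q$, not only on $\cE$, and $\rho_E(Q)=1$ can only be proved for suitable $Q$:
\begin{itemize}
\item for the canonical generators of $A(\bB_E)\cong\oa(\cE)$, your Haagerup argument works verbatim with the roles exchanged;
\item for selfadjoint $Q$ with $\cE$ adjoint-invariant (the case used in Theorem \ref{thm:rho_E1E2}), one can apply to $\sum_k Q_k\otimes\iota_E(Z_k)$ the $*$-representations of $\cmax(E)$ induced by selfadjoint points of $\ol{\bB_\cE}$.
\end{itemize}
The paper's own proof also disposes of this part with a one-line ``by symmetry'', which has exactly the same limitation.

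\textbf{The other claims are fine.} Your main argument for $\rho_\cE(Z)=1$ takes a genuinely different route from the paper. You show each term of \eqref{eq:Hsr} equals $1$: via $A(\bB_\cE)\cong\oa(E)$, Pisier's identification, and finite-dimensional self-duality $(E^{\otimes_h n})^*\cong\cE^{\otimes_h n}$, it is the canonical tensor representing $\id_{E^{\otimes_h n}}$.

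The paper instead gets $\rho_\cE(Z)\le\|\iota_\cE(Z)\|=1$ from the norm identity. It gets $\rho_\cE(Z)\ge 1$ softly, from monotonicity of $\rho_\cE$ under the evaluation homomorphisms at points of $\bB_\cE$, together with Theorem \ref{thm:SS25}, which supplies points with $\rho_\cE$ arbitrarily close to $1$. Your route needs more tensor machinery but yields exact values of every term.

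Three pieces of your write-up should be dropped in favour of the Haagerup computation:
\begin{itemize}
\item your preliminary upper bound multiplies the $Q_k$ in $B(\cH)$ rather than in $\cmax(\cE)$, i.e.\ it works in the wrong algebra (the bound itself is trivially true);
\item the ``tautological point'' sketch is not an argument as written;
\item the ``contradicting maximality'' sketch is incomplete: the map $Z\mapsto(1+\epsilon)Z$ is only bounded by $\|S\|\|S^{-1}\|$, not contractive, and the contradiction needs testing on homogeneous polynomials of high degree.
\end{itemize}

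The proof of the consequence, applying $\varphi\otimes\id$ to the Haagerup terms, is correct. It is precisely the monotonicity under completely bounded homomorphisms that the paper quotes.
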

\begin{proof}
    The closed ball $\ol{\bB_{\cE}}$ corresponds to all completely contractive representations of $E$ into finite dimensional space, thus by an elementary property of operator spaces (see \cite[Remark 2.1.2]{Pisier-book}), we have
    \[
    \left\|\sum_k Z_k \otimes Q_k \right\| = \sup\left\{ \left\| \sum_k X_k \otimes Q_k \right\| : X \in \ol{\bB_{\cE}} \right\} = \sup\left\{ \|X\|_{\cE} : X \in \ol{\bB_{\cE}} \right\} = 1. 
    \]
    Together with Theorem \ref{thm:SS25}, this implies that $\rho_{\cE}(Z) \leq 1$. 
    On the other hand, by \cite[Remark 2.4]{shalsham2025spectral} (which says that the spectral radius decreases under completely bounded homomorphisms), we have that $\rho_{\cE}(X) \leq \rho_{\cE}(Z)$ for all $X \in \bB_\cE$. 
    But by Theorem \ref{thm:SS25}, we have, for every $r<1$, a point $X \in \bB_\cE$ such that $\rho_\cE(X) > r$. It follows that $\rho_\cE(Z) = 1$. 
    The equality $\rho_E(Q) = 1$ follows by symmetry. 

By \cite[Remark 2.4]{shalsham2025spectral}, we have that the spectral radius can only decrease under the application of unital completely bounded homomorphisms, thus $\rho_\cE(\varphi(Z))\leq\rho_\cE(Z)$. 
\end{proof}

    Note that the converse of Lemma \ref{lem:specrad_homo} does not hold in general: $\rho_\cE(X) \leq 1$ does not imply that $X$ gives rise to a completely bounded representation of $A(\bB_\cE)$, as can be witnessed already in the $d = 1$ case by considering 
    \[
      X = 
      \begin{pmatrix}
      1 & 1 \\
      0 & 1
      \end{pmatrix} .
    \]
However, there are two variants of a partial converse that do hold: first, if $\cK$ is finite dimensional and $X$ is irreducible, then $\rho_\cE(X) \leq 1$ implies that $X$ gives rise to a completely bounded representation of $A(\bB_\cE)$; and second, in general, the strict inequality $\rho_\cE(X) < 1$ implies that $X$ gives rise to a bounded homomorphism. 
We record these two facts in the following two lemmas. 

\begin{lemma}\label{lem:specrad_irreducible}
        Let $\cE$ be an operator space structure on $\C^d$, let $\cK$ be finite dimensional and let $X \in B(\cK)^d$ be irreducible. If $\rho_\cE(X) \leq 1$, then $X$ gives rise to a unital completely bounded homomorphism $\varphi \colon A(\bB_\cE) \to B(\cK)^d$ such that $X = (X_1, \ldots, X_d) = \varphi(Z) := (\varphi(Z_1), \ldots, \varphi(Z_d))$. 
\end{lemma}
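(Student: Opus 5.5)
The plan is to reduce to the strict case by scaling and then use compactness together with irreducibility to get uniform bounds. Let $n = \dim \cK$. For every $r > 1$ we have $\rho_\cE(X/r) < 1$. Theorem \ref{thm:SS25} then gives an invertible $S_r \in \GL_n$ with $\|S_r^{-1} X S_r\| < r$, i.e.\ $S_r^{-1} X S_r \in r\bB_\cE$. For a polynomial $p$ we evaluate at this point, so
\[
\|p(X)\| \leq \|S_r\|\|S_r^{-1}\| \sup_{r\bB_\cE}\|p\| ,
\]
and the same bound holds at every matrix level. The difficulty is that $\kappa(S_r)=\|S_r\|\|S_r^{-1}\|$ may blow up as $r \to 1$, as the Jordan block example shows. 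Irreducibility is what must prevent this.

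To control $S_r$, normalize so that $\|S_r\| = 1$ and consider $Y_r = S_r^{-1} X S_r$, which lies in the bounded set $\ol{2\bB_\cE}(n)$ for $r\le 2$. Take $r_k \downarrow 1$ and pass to subsequences with $S_{r_k} \to S$ and $Y_{r_k} \to Y$. Then $\|Y\|\le 1$ and $X S = S Y$. If $S$ is invertible we are done: $Y \in \ol{\bB_\cE}$ defines a completely contractive representation of $A(\bB_\cE)$ by evaluation, since $A(\bB_\cE)$ consists of uniformly continuous functions on the closed ball. Conjugating by $S$ then gives a completely bounded homomorphism $\varphi$ with $\varphi(Z) = X$.

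If $S$ is singular, its range is a nonzero invariant subspace for $X$: it is nonzero because $\|S\|=1$, and invariant because $X S = S Y$. By irreducibility the range is all of $\cK$, contradicting singularity since $\cK$ is finite dimensional. Hence $S$ is invertible.

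The main obstacle is exactly this compactness step. A careful argument requires a nonzero limit, which the normalization $\|S\|=1$ provides, and boundedness of $Y_{r_k}$, which holds because $\|Y_{r_k}\| < r_k \leq 2$. One also needs to check that the limit relation $XS = SY$ yields an invariant subspace even though $S$ may be singular. It does: $X(\ran S) = \ran(SY) \subset \ran S$. If $\ran S = \cK$, then $S$ is surjective, hence invertible in finite dimensions. The remaining routine point is that evaluation at a point of the closed ball extends continuously from polynomials to $A(\bB_\cE)$, which follows from the definition of $A(\bB_\cE)$ as the sup-norm closure of the polynomials.
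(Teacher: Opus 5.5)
Your proof is correct and has the same structure as the paper's: find $S\in\GL_n$ with $\|S^{-1}XS\|_{M_n(\cE)}\le 1$, observe that evaluation at the resulting point of $\ol{\bB_\cE}$ is a completely contractive representation of $A(\bB_\cE)$, and conjugate back by $S$. The one difference is that the paper obtains $S$ by citing \cite[Corollary 2.12]{shalsham2025spectral}, whereas you derive it from Theorem \ref{thm:SS25}. You normalize $\|S_r\|=1$, pass to a limit with $XS=SY$, and then use irreducibility to rule out a singular limit, which makes your argument self-contained. Your reading of irreducibility as ``no nontrivial common invariant subspace'' is the right one: the $2\times 2$ Jordan block has no nontrivial reducing subspace, yet the conclusion of the lemma fails for it.
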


\begin{proof}
We think of $X$ as an element in $M_n(\cE)$. 
By \cite[Corollary 2.12]{shalsham2025spectral}, there exists $S\in GL_n$ such that $\|S^{-1} X S\|_{M_n(\cE)} \leq 1$. 
It follows that $Y = S^{-1} X S$ gives rise to a completely contractive representation and therefore $X = S Y S^{-1}$ gives rise to a completely bounded representation. 
\end{proof}

\begin{lemma}\label{lem:specrad_strict}
        Let $\cE$ be an operator space structure on $\C^d$, let $X \in B(\cK)^d$. If $\rho_\cE(X) < 1$, then $X$ gives rise to a unital completely bounded homomorphism $\varphi \colon A(\bB_\cE) \to B(\cK)^d$ such that $X = (X_1, \ldots, X_d) = \varphi(Z) := (\varphi(Z_1), \ldots, \varphi(Z_d))$. 
\end{lemma}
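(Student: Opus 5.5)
The plan is to reduce everything to Theorem \ref{thm:SS25} and then to the universal property of $A(\bB_\cE) \cong \oa(E)$, with $E = \cE^*$, exactly as in the proof of Lemma \ref{lem:specrad_irreducible}; the only difference is that the similarity now comes from the strict inequality.

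\textbf{Step 1 (similarity to a contraction).} Since $\rho_\cE(X) < 1$, Theorem \ref{thm:SS25} gives an invertible $S \in B(\cK)$ such that $Y := S^{-1} X S$ satisfies $\|Y\|_{B(\cK)\otimes_{\min}\cE} < 1$. In coordinates this reads $\|\sum_j Y_j \otimes Q_j\| < 1$, so $Y \in \bB_\cE^{(\infty)}$ is a point of the operator level of the ball.

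\textbf{Step 2 (a completely contractive representation of $E$).} An element of $B(\cK)\otimes_{\min}\cE$ of norm at most $1$ corresponds, via the completely isometric embedding $B(\cK)\otimes_{\min}\cE \hookrightarrow \operatorname{CB}(\cE^*, B(\cK))$ used in the proof of Theorem \ref{thm:SS25}, to a complete contraction $\psi_Y \colon E \to B(\cK)$. This map sends the coordinate functionals $Z_j$, which generate $E$ inside $A(\bB_\cE)$, to $Y_j$. Because $\cE$ is finite dimensional, this identification is purely algebraic, so no weak-$*$ or normality issues arise. By the universal property of $\oa(E)$, $\psi_Y$ extends to a unital completely contractive homomorphism $\pi \colon \oa(E) \to B(\cK)$. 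Using the identification $\oa(E) \cong A(\bB_\cE)$ from \cite[Section 7.1]{SampatShalit25}, we regard $\pi$ as a homomorphism on $A(\bB_\cE)$ with $\pi(Z_j) = Y_j$.

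Equivalently, one can evaluate at the operator point directly: for a polynomial $p$, the bound $\|p(Y)\| \le \|p\|_\infty$ follows by compressing $Y$ to finite-dimensional subspaces and using the fact that the sup norm over $\bB_\cE$ already controls operator points of norm less than $1$. The universal property is the cleaner route, and it is the one I would use.

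\textbf{Step 3 (undoing the similarity).} Define $\varphi := \Ad_{S^{-1}} \circ \pi$, that is, $\varphi(f) = S\,\pi(f)\,S^{-1}$. This is a unital homomorphism, it is completely bounded with $\|\varphi\|_{cb} \le \|S\|\,\|S^{-1}\|$, and it satisfies $\varphi(Z_j) = S Y_j S^{-1} = X_j$, as required.

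\textbf{Main obstacle.} The only delicate point is Step 2: making sure the operator-level point $Y$ (rather than a matrix point) really induces a complete contraction on $E$, and that this matches the identification of $Z$ with the generators of $\oa(E)$. I would settle this by the duality $M_n(\cE) \cong \operatorname{CB}(E, M_n)$ applied to every compression of $Y$ to a finite-dimensional subspace, and then taking the supremum over such compressions.
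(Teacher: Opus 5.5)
Your proof is correct, but it organizes the key step differently from the paper. Both proofs begin with the same reduction: Theorem \ref{thm:SS25} gives $Y=S^{-1}XS$ in the open unit ball of $B(\cK)\otimes_{\min}\cE$.

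Your route from there is abstract. $Y$ corresponds to a complete contraction $E\to B(\cK)$ under $B(\cK)\otimes_{\min}\cE\cong\operatorname{CB}(E,B(\cK))$. The universal property of $\oa(E)$ extends it to a unital completely contractive homomorphism, and the identification $\oa(E)\cong A(\bB_\cE)$ transports it. Conjugating back by $S$ then gives the explicit bound $\|\varphi\|_{cb}\le\|S\|\,\|S^{-1}\|$.

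The paper works directly from the definition of $A(\bB_\cE)$ as a supremum over matrix points. It compresses $Y$ to finite-dimensional subspaces $\cM$ and observes that $P_\cM Y|_\cM\in\bD_Q$, so evaluation there is completely contractive. It then lets $\cM\nearrow\cK$, using strong operator convergence on free polynomials. This is exactly the alternative you sketch in your second paragraph.

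Your version is shorter and more conceptual; the paper's is self-contained. You did not, however, locate the genuinely operator-level content correctly. That $Y$ induces a complete contraction on $E$ is immediate from the cited embedding and needs no compression argument. The nontrivial point is that the matrix-level sup norm defining $A(\bB_\cE)$ controls $\|p(Y)\|$ for an \emph{operator} point $Y$. The norm on $\oa(E)$ is a supremum over all Hilbert-space complete contractions, so this is precisely what the completely isometric identification $\oa(E)\cong A(\bB_\cE)$ encodes. That identification in turn rests on essentially the same compression argument the paper spells out. So your proof is valid as long as you treat that identification, including its operator-level content, as a black box from \cite{SampatShalit25}.
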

\begin{proof}
    Let $Q_1, \ldots, Q_d \in B(\cH)$ be a basis that endows $\C^d$ with the operator system structure $\cE$. By the main result of \cite[Theorem 2.7]{shalsham2025spectral}, $\rho_\cE(X) < 1$ implies that $X$ is similar to a point in $\bD_Q^{(\infty)}$. 
    We may therefore assume without loss of generality that $X \in \bD_Q^{(\infty)}$, that is, that 
    \begin{equation}\label{eq:TQcontraction}
    \left\| \sum_j X_j \otimes_{\min} Q_j \right\| < 1.
    \end{equation}
    Now, recall from \cite{SampatShalit25,sampat2025weak} that by the very definition of the algebra of bounded NC functions on $\bB_\cE$, every point $X \in \bD_Q$ gives rise to an evaluation representation $\Phi_X \colon f \mapsto f(X)$ defined on $H^\infty(\bB_\cE)$, and therefore also defined on $A(\bB_\cE)$. Therefore, it remains to prove for $X \in B(\cK)^d$ at the infinite level that if \eqref{eq:TQcontraction} holds, then $Z_i \mapsto X_i$ can be extended to a completely contractive mapping from $A(\bB_\cE)$ to $B(\cK)$. 

    Now, let $X \in B(\cK)^d$ such that \eqref{eq:TQcontraction} holds. Clearly, $X$ gives rise to a unital algebra homomorphism $\varphi$ from the free algebra $\bC\langle z_1, \ldots, z_d \rangle$ to $B(\cK)$ taking $z_i$ to $X_i$. 
    We want to show that $\varphi$ is completely bounded, when $\bC \langle z_1, \ldots, z_d \rangle$ inherits the norm from $A(\bB_\cE)$. 
    For every finite-dimensional subspace $\cM \subset \cK$, we consider $X^\cM = P_\cM X \big|_\cM = (P_\cM X_1 \big|_\cM, \ldots, P_\cM X_d \big|_\cM)$. 
    Then $X^\cM$ can be considered as a point in $\bD_Q$, and therefore gives rise to unital completely contractive homomorphism $\varphi^\cM \colon A(\bB_\cE) \to B(\cM) \subset B(\cK)$. For every free polynomial $p \in \bC \langle z_1, \ldots, z_d \rangle$, 
    \[
    \varphi^\cM(p) \xrightarrow{\cM \nearrow \cK} \varphi(p) 
    \]
    in the strong operator topology. 
    It follows that $\varphi$ is a unital, completely contractive homomorphism from $\bC\langle z_1, \ldots, z_d \rangle$ to $B(\cK)$, and it extends to the desired completely contractive unital homomorphism $\varphi \colon A(\bB_\cE) \to B(\cK)$ which maps $Z$ to $X$, as required. 
\end{proof}

\begin{lemma} \label{lem:specrad_ineq}
    Let $\cE$ be an operator space structure on $\C^d$ that quantizes $V$, then for every $d$-tuple $X \in B(\cK)^d$,
    \begin{equation}\label{eq:inequalities}
    \rho_{\min(V)}(X) \leq \rho_{\cE}(X) \leq \rho_{\max(V)}(X).
    \end{equation}
\end{lemma}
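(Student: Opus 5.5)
The plan is to deduce both inequalities from Theorem \ref{thm:SS25} together with the inclusions of unit balls $\bB^{(\infty)}_{\max(V)} \subset \bB^{(\infty)}_{\cE} \subset \bB^{(\infty)}_{\min(V)}$, and then use positive homogeneity of the spectral radius. The inclusions hold at every matrix level and at the operator level, because the identity maps $\max(V) \to \cE \to \min(V)$ are completely contractive. At the operator level we use that the ampliation $\id_{B(\cK)} \otimes \id$ of a complete contraction is a contraction between minimal tensor products $B(\cK) \otimes_{\min} \max(V) \to B(\cK) \otimes_{\min} \cE \to B(\cK)\otimes_{\min} \min(V)$.

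For the right-hand inequality, let $X \in B(\cK)^d$ and let $r > \rho_{\max(V)}(X)$. By homogeneity, $\rho_{\max(V)}(r^{-1}X) < 1$. Theorem \ref{thm:SS25} then gives an invertible $S \in B(\cK)$ with $\|S^{-1}(r^{-1}X)S\|_{B(\cK)\otimes_{\min}\max(V)} < 1$. By the contractivity above, $\|S^{-1}(r^{-1}X)S\|_{B(\cK)\otimes_{\min}\cE} < 1$. Applying the easy direction of Theorem \ref{thm:SS25} for $\cE$ gives $\rho_\cE(r^{-1}X) < 1$, i.e.\ $\rho_\cE(X) < r$. Letting $r \downarrow \rho_{\max(V)}(X)$ yields $\rho_\cE(X) \le \rho_{\max(V)}(X)$.

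The left-hand inequality is the same argument with $\cE$ in place of $\max(V)$ and $\min(V)$ in place of $\cE$. The easy direction used here is that $\|Y\| < 1$ implies $\rho(\iota(Y)) \le \|\iota(Y)\| \le \|Y\| < 1$, since $\iota_\cE$ is completely isometric, and the spectral radius is similarity invariant.

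An alternative route avoids Theorem \ref{thm:SS25}. The identity maps induce unital $*$-homomorphisms $\cmax(\max(V)) \to \cmax(\cE) \to \cmax(\min(V))$ compatible with the inclusions $\iota$. Tensoring with $\id_{B(\cK)}$ on minimal tensor products gives unital homomorphisms that send $\iota_{\max(V)}(X)$ to $\iota_\cE(X)$ and so on. Unital homomorphisms do not increase the spectrum, so they do not increase the spectral radius, and this gives the inequalities directly. I would present the first argument. The only step needing care, though it is routine, is that complete contractivity of the identity maps passes to the operator level $B(\cK)\otimes_{\min}\cdot$; this follows from injectivity of the minimal tensor norm.
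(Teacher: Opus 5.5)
Your proof is correct. The route you chose to present is not the paper's main proof, though it is close to the paper's secondary argument.

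\begin{itemize}
\item \textbf{The paper's main proof} is exactly your ``alternative route.'' The unital $*$-homomorphisms $B(\cK)\otimes_{\min}\cmax(\max(V))\to B(\cK)\otimes_{\min}\cmax(\cE)\to B(\cK)\otimes_{\min}\cmax(\min(V))$ send $\iota_{\max(V)}(X)\mapsto\iota_\cE(X)\mapsto\iota_{\min(V)}(X)$, and spectra can only shrink under such maps.
\item \textbf{The paper's secondary argument} starts, as you do, by reducing via homogeneity to $\rho_{\max(V)}(X)<1\Rightarrow\rho_\cE(X)\le 1$. It then uses Lemma \ref{lem:specrad_strict} (which rests on Theorem \ref{thm:SS25}) to get a completely bounded unital homomorphism $A(\bB_{\max(V)})\to B(\cK)$ with $Z\mapsto X$. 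It precomposes with the canonical map $A(\bB_\cE)\to A(\bB_{\max(V)})$ and concludes with Lemma \ref{lem:specrad_homo}.
\item \textbf{Your argument} shortcuts that detour. Once Theorem \ref{thm:SS25} puts a similar copy of $r^{-1}X$ in $\bB^{(\infty)}_{\max(V)}\subset\bB^{(\infty)}_\cE$, the trivial bound $\rho_\cE(Y)\le\|Y\|_{B(\cK)\otimes_{\min}\cE}$ and similarity invariance finish. No function algebras or representation lemmas are needed.
\end{itemize}

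As for what each approach buys: the $\cmax$-functoriality proof is a one-liner and does not need the similarity theorem (and hence not Paulsen's theorem) at all. Your route depends on the hard direction of Theorem \ref{thm:SS25}. In exchange, it makes transparent that the inequalities are just the inclusions $\bB^{(\infty)}_{\max(V)}\subset\bB^{(\infty)}_\cE\subset\bB^{(\infty)}_{\min(V)}$ passed to similarity envelopes.
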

\begin{proof}
    This follows from the fact that the $*$-homomorphisms 
    \[
    B(\cK) \otimes_{\min} \cmax(\max(V)) \to B(\cK) \otimes_{\min} \cmax(\cE) \to B(\cK) \otimes_{\min} \cmax(\min(V))
    \]
    map $\iota_{\max(V)}(X)$ to $\iota_\cE(X)$ and then to $\iota_{\min(V)}(X)$, whence 
    \[
    \rho(\iota_{\min(V)}(X)) \leq \rho(\iota_\cE(X)) \leq \rho(\iota_{\max(V)}(X)). 
    \]
    It follows from the definition \eqref{eq:spec_rad} of the spectral radius that \eqref{eq:inequalities} holds. 

    Alternatively, this follows from Lemma \ref{lem:specrad_homo} as follows. Since the spectral radius is homogeneous, it suffices to show that 
    \[
    \rho_{\max(V)}(X) < 1 \implies \rho_{\cE}(X) \leq 1, 
    \]
    and similarly for $\rho_{\cE}(X)$ and $\rho_{\min(V)}(X)$. 
    Suppose that $\rho_{\max(V)}(X) < 1$.
    Then by Lemma \ref{lem:specrad_strict} $Z_i \mapsto X_i$ defines a completely bounded unital homomorphism $\varphi \colon A(\bB_{\max(V)}) \to B(\cK)$. 
    This gives rise to a completely bounded unital homomorphism 
    \[
    \tilde{\varphi} \colon A(\bB_\cE) \to A(\bB_{\max(V)}) \to B(\cK)
    \]
    sending $Z$ to $X$. By Lemma \ref{lem:specrad_homo}, $\rho_\cE(X) \leq 1$. 
    The analogous implication $\rho_{\cE}(X) < 1 \implies \rho_{\min(V)}(X) \leq 1$ is shown in the same way. 
\end{proof}

\section{Dependence on the operator space structure}

\subsection{The case of commuting tuples}

Let $V$ be a normed space and let $B_V$ be the open unit ball of $V$. We have a canonical isometric map $V^* \to C(\overline{B_V})$. 
This embedding induces the minimal operator space structure on $V^*$. Moreover, the algebra generated by the image of $V^*$ is the ball algebra $A(B_V)$, i.e., the algebra of all continuous functions on $\ol{B_V}$ which are analytic in $B_V$. 
By the universal properties described above, we have a completely contractive homomorphism $A(\bB_{\max(V)}) \to A(B_V)$; in fact the map is given concretely by restriction $A(\bB_\cE) \ni f \mapsto f \big|_{B_V}$ (note that this map is in general not bounded below or surjective). Therefore, if $\varphi \colon A(B_V) \to B(\cK)$ is a completely bounded homomorphism, then, by Lemma \ref{lem:specrad_homo}, $\rho_{\max(V)}(\varphi(z)) \leq 1$. Here, $\varphi(z) = (\varphi(z_1),\ldots,\varphi(z_d)) \in B(\cK)^d$ is a commuting tuple. 

For a commuting tuple $X \in B(\cK)^d$ we will let $\cA_X$ denote the unital operator algebra generated by $X$, and let $\sigma(X)$ denote the joint spectrum of $X$ with respect to $\cA_X$. Namely,
\[
\sigma(X) = \left\{ (\psi(X_1),\ldots, \psi(X_d)) \in \C^d \mid \psi \text{ a character of } \cA_X\right\}, 
\]
where by {\em character} we mean a nonzero homomorphism of $\cA_X$ into $\bC$. 
References for the Banach algebraic spectrum and functional calculus are \cite[Chapter I]{Gamelin-unifrom_alg} and \cite[Section III.4]{Gamelin-unifrom_alg}. 

\begin{lemma} \label{lem:specrad_implies_spectrum}
    Let $\cE$ be an operator space structure on $\C^d$ quantizing $V$. 
    For every commuting tuple $X = (X_1,\ldots,X_d) \in B(\cK)^d$, if $\rho_{\cE}(X) < 1$, then $\sigma(X) \subset B_V$.
\end{lemma}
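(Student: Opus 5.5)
We prove the statement by reducing it to the case of a point of the ball. By Theorem \ref{thm:SS25}, $\rho_\cE(X)<1$ gives an invertible $S\in B(\cK)$ with $\|S^{-1}XS\|_{B(\cK)\otimes_{\min}\cE}<1$. Similarity does not change the algebra structure: $T\mapsto S^{-1}TS$ is an isomorphism of $\cA_X$ onto $\cA_{S^{-1}XS}$ that maps $X_j$ to $S^{-1}X_jS$. Characters correspond under this isomorphism, so $\sigma(X)=\sigma(S^{-1}XS)$. Hence we may assume from now on that $\|X\|_{B(\cK)\otimes_{\min}\cE}<1$. Since $\rho_\cE$ is homogeneous, we may even fix $r<1$ with $\|X\|\le r$.

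Next we bound characters. Let $\psi$ be a character of $\cA_X$ and set $\lambda=(\psi(X_1),\dots,\psi(X_d))$. The key point is that characters of a unital Banach algebra are contractive. Here $\cA_X$ is the norm-closed unital operator algebra generated by $X$, so $\psi$ has norm $1$. Any bounded functional on an operator algebra is automatically completely bounded with $\|\psi\|_{cb}=\|\psi\|$, because its range is $\bC$ (a commutative C*-algebra). Therefore $\psi$ is completely contractive, and so is its ampliation $\psi\otimes\id_\cE$ from $\cA_X\otimes_{\min}\cE$ to $\bC\otimes\cE=V$. Using the injectivity of $\otimes_{\min}$, we view $X$ as the element $\sum_j X_j\otimes e_j$ of $\cA_X\otimes_{\min}\cE\subset B(\cK)\otimes_{\min}\cE$. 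Applying the ampliation gives
\[
\|\lambda\|_V=\Big\|\sum_j\psi(X_j)e_j\Big\|_\cE\le \Big\|\sum_j X_j\otimes e_j\Big\|\le r<1 .
\]
Consequently $\lambda\in B_V$, which is what we want.

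The one step that needs care is the claim that $\psi\otimes\id_\cE$ is contractive from the minimal tensor norm to the norm of $V$. Here is a direct check. The norm of $V$ is the first level of $\cE$. For $v=\sum_j\lambda_je_j$, write $\|v\|_V=\sup|f(v)|$ over $f\in V^*$ with $\|f\|\le1$. Each such $f$ is completely contractive on $\cE$, since functionals are automatically cb with the same norm. Then $f(v)=(\psi\otimes f)(X)$. Because the minimal tensor norm is a cross norm for products of completely bounded maps, $|(\psi\otimes f)(X)|\le\|\psi\|_{cb}\|f\|_{cb}\|X\|_{\min}\le r$. Taking the supremum over $f$ gives the bound above. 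Finally, the reduction used Theorem \ref{thm:SS25} in the operator setting, which the paper has already extended to arbitrary operator spaces, so nothing further is needed.
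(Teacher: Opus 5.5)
Your proof is correct, but it takes a more direct route than the paper's.

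\textbf{The paper's route.} It invokes Lemma \ref{lem:specrad_strict} to obtain a completely bounded unital homomorphism $\varphi\colon A(\bB_\cE)\to B(\cK)$ with $\varphi(Z)=X$. It then pulls each character $\psi$ of $\cA_X$ back to the character $\psi\circ\varphi$ of $A(\bB_\cE)$. Next it uses, without comment, that characters of $A(\bB_\cE)$ correspond to points of $\overline{\bB_\cE(1)}=\overline{B_V}$. This only gives the closed ball, so it finishes with a $(1+\varepsilon)X$ rescaling to reach the open ball.

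\textbf{Your route.} You bypass the function algebra $A(\bB_\cE)$ entirely. You apply Theorem \ref{thm:SS25} directly; this is also what Lemma \ref{lem:specrad_strict} rests on. You then use similarity invariance of $\sigma$ and a single estimate, $|f(\lambda)|=|(\psi\otimes f)(S^{-1}XS)|\le\|S^{-1}XS\|_{\min}$ for contractive $f\in V^*$. In effect, you carry out on $\cA_{S^{-1}XS}$ the same ``characters are contractive, hence completely contractive'' argument that the paper leaves implicit when it identifies the character space of $A(\bB_\cE)$.

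\textbf{What each buys.} Since the similarity already gives $\|S^{-1}XS\|<1$, you land in the open ball immediately and need no perturbation step. Your argument also needs no information about $A(\bB_\cE)$ beyond what Theorem \ref{thm:SS25} provides. The paper's route instead places $\sigma(X)$ inside the maximal ideal space of $A(\bB_\cE)$. That matches the representation-theoretic mechanism used for the converse direction in Theorem \ref{thm:comm_specrad}, which passes through $A(B_V)$ and $A(\bB_{\max(V)})$.

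\textbf{Minor points.} The appeal to homogeneity to produce $r$ is unnecessary; just take $r=\|S^{-1}XS\|$. Neither your argument nor the paper's actually uses commutativity of $X$. Both do need $\cA_X$ to be norm closed, so that characters are automatically bounded, and you correctly make that explicit.
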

\begin{proof}
Since $\rho_{\cE}(X) < 1$, we know by Lemma \ref{lem:specrad_strict} that the map $A(\bB_{\cE}) \to B(\cK)$ that sends the coordinates to $X$ is completely bounded. Then, every character of the unital algebra generated by $X$ pulls back to a character of $A(\bB_{\cE})$. Therefore, $\sigma(X) \subset \ol{\bB_\cE(1)} = \overline{B_V}$. Since this is true for $(1+\varepsilon)X$ where $\varepsilon > 0$ is sufficiently small, we see that $\sigma(X) \subset B_V$.
\end{proof}

We shall also use the notion of spectrum introduced by Taylor \cite{Taylor-joint_spectrum}, which is denoted by $\sigma_T(X)$. 
The Taylor spectrum $\sigma_T(X)$ coincides with the Banach algebraic spectrum $\sigma(X)$ when $\cK$ is finite dimensional. 
In general we have $\sigma_T(X) \subset \sigma(X)$ (see \cite[Proposition 25.3]{muller2007spectral} or \cite{Curto-applications,Taylor-joint_spectrum}). 
The following theorem is further justification for the term ``spectral radius". 

\begin{theorem} \label{thm:comm_specrad}
    Let $X = (X_1,\ldots,X_d) \in B(\cK)^d$ be a commuting tuple, let $V$ be $\bC^d$ equipped with a norm $\|\cdot\|_V$, and let $\cE$ be an operator space structure on $\C^d$ quantizing $V$. Then, $\rho_{\cE}(X) < 1$ if and only if $\sigma(X) \subset B_V$, and this happens if and only if $\sigma_T(X) \subset B_V$. 
\end{theorem}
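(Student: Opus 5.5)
We prove the cycle $\rho_\cE(X)<1 \Rightarrow \sigma(X)\subset B_V \Rightarrow \sigma_T(X)\subset B_V \Rightarrow \rho_\cE(X)<1$.

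The first implication is exactly Lemma \ref{lem:specrad_implies_spectrum}. The second is immediate from the inclusion $\sigma_T(X)\subset\sigma(X)$ quoted above.

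The substance is the last implication. By Lemma \ref{lem:specrad_ineq} we have $\rho_\cE(X)\le\rho_{\max(V)}(X)$, so it suffices to show $\rho_{\max(V)}(X)<1$. Since $\sigma_T(X)$ is compact and contained in the open ball $B_V$, there is $r<1$ with $\sigma_T(X)\subset rB_V$. Put $X'=r^{-1}X$, so that $\sigma_T(X')\subset B_V$ and $\rho_{\max(V)}(X)=r\rho_{\max(V)}(X')$. It is enough to prove $\rho_{\max(V)}(X')\le 1$.

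To do this, I would produce a completely bounded unital homomorphism $\varphi\colon A(B_V)\to B(\cK)$ with $\varphi(z_j)=X'_j$. This comes from the Taylor holomorphic functional calculus: $\overline{B_V}$ is a compact convex neighbourhood of $\sigma_T(X')$, and every $f\in A(B_V)$ can be approximated uniformly on $\overline{B_V}$ by its dilates $f_s(z)=f(sz)$, $s\uparrow 1$. The dilates are holomorphic on a neighbourhood of $\overline{B_V}$.

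Fix an open set $U$ with $\sigma_T(X')\subset U\Subset B_V$. Taylor's calculus gives a continuous unital homomorphism $\cO(B_V)\to B(\cK)$ satisfying $\|f(X')\|\le C\sup_{\overline U}|f|$. Applying this to $f_s$ and letting $s\to 1$ defines $\varphi$ on all of $A(B_V)$, bounded by $C\|f\|_\infty$.

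Complete boundedness follows because the same Cauchy–Weil/Martinelli integral representation applies entrywise to matrices of functions, giving the same constant, up to a factor depending only on the kernel. Alternatively, one can note that $A(B_V)$ is a uniform algebra with the minimal structure on its generators, so bounded maps from $\min$-type spaces into $B(\cK)$ are completely bounded with $\|\varphi\|_{cb}\le$ a multiple of $\|\varphi\|$. Then the discussion preceding Lemma \ref{lem:specrad_implies_spectrum} (the composition $A(\bB_{\max(V)})\to A(B_V)\xrightarrow{\varphi}B(\cK)$ together with Lemma \ref{lem:specrad_homo}) gives $\rho_{\max(V)}(X')\le 1$. Hence $\rho_\cE(X)\le r<1$.

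The main obstacle is the complete boundedness of $\varphi$. If the entrywise functional-calculus estimate is awkward, I would instead use a cruder route that avoids $A(B_V)$ altogether.

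That route starts by choosing $s\in(r,1)$. Each $X_j$ generates a commutative Banach algebra, and by Gelfand theory for the commuting tuple, $\sup\{|\ell(\lambda)|:\lambda\in\sigma(X)\}$ equals the spectral radius of $\ell(X)$ for every linear functional $\ell$. One would then try to bound $\|\sum_{|w|=n}X^w\otimes Q_{w_1}\otimes_h\cdots\otimes_h Q_{w_n}\|$ in \eqref{eq:Hsr} directly for $\max(V)$ using commutativity, which symmetrizes the tensor, and the maximal structure, which lets the norm be estimated by a projective-type norm of $V^{\otimes n}$ evaluated through the functional calculus.

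I expect the functional-calculus route to work, since the key estimate $\|\varphi\|_{cb}<\infty$ is exactly what Taylor's integral formula delivers with matrix coefficients.
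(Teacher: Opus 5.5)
Your main route is correct and is essentially the paper's proof: reduce to $\max(V)$ via Lemma \ref{lem:specrad_ineq}, obtain a completely bounded homomorphism $A(B_V)\to B(\cK)$ from the Martinelli/Taylor integral formula applied entrywise to matrices over $A(B_V)$, compose with $A(\bB_{\max(V)})\to A(B_V)$ and invoke Lemma \ref{lem:specrad_homo}. Your initial rescaling by $r^{-1}$ plays the role of the paper's final $(1+\varepsilon)$ perturbation, and the dilation step is harmless but unnecessary, since every $f\in A(B_V)$ is already holomorphic on the neighbourhood $B_V$ of $\sigma_T(X')$. Do discard your ``alternative'' justification of complete boundedness: bounded maps \emph{out of} a minimal operator space or uniform algebra into $B(\cK)$ need not be completely bounded (automatic complete boundedness holds for maps \emph{into} minimal spaces, and Pisier's polynomially bounded operator not similar to a contraction gives a bounded, non-cb homomorphism of the disc algebra), so the entrywise integral estimate is genuinely needed.
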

\begin{proof}
    One implication follows form Lemma \ref{lem:specrad_implies_spectrum} and another implication follows immediately from the fact that $\sigma_T(X) \subset \sigma(X)$ noted above. 
    
    By Lemma \ref{lem:specrad_ineq}, to prove the remaining implication it suffices to prove that $\sigma_T(X) \subset B_V$ implies that $\rho_{\max(V)}(X) < 1$. Assume that $\sigma_T(X) \subset B_V$. 
    Briefly, by the Taylor functional calculus, there exists a bounded homomorphism $A(B_V) \to B(\cK)$ mapping $f$ to $f(X)$; in fact, since the functional calculus is given by an integral formula, this map is completely bounded. 
    Moreover, the functional calculus lifts to a completely bounded homomorphism $A(\bB_{\max(V)}) \to B(\cK)$, whence $\rho_{\max(V)}(X) \leq 1$, by Lemma \ref{lem:specrad_homo}. A $1+\varepsilon$ perturbation argument gives $\rho_{\max(V)}(X) < 1$, as required.
    
    Let us provide some more details, since we have not found a convenient reference for the complete boundedness of the Taylor functional calculus. 
    Since $\sigma_T(X)$ is compact we can find an open set $U$ with smooth boundary, such that $\sigma_T(X) \subset U \subset \overline{U} \subset B_V$. Therefore, by Vasilescu's Martinelli formula for the analytic functional calculus, for every $f \in A(B_V)$,
    \[
    f(X) \xi = \frac{1}{(2\pi i)^d} \int_{\partial U} f(z) (M_X(z) \xi) \wedge dz_1 \wedge \cdots \wedge d z_d.
    \]
    Here, $M_X$ is an operator-valued differential form of degree $(0,n-1)$. To define $M_X$ we need to consider the differential of the Koszul complex on $\cK$. Recall that we have the Hilbert space $\Lambda(\cK) = \oplus_{m=0}^d (\wedge^m \C^d) \otimes \cK$. Let us fix an orthonormal basis $e_1,\ldots, e_d$ for $\C^d$ and write $\omega$ for an element of $\Lambda(\cK)$ and an operator on it
    \[
    \delta_X (\xi \otimes \omega) = \sum_{j=1}^d X_j \xi \otimes (\omega \wedge e_j).
    \]
    It is not hard to check that $\delta_X^2 = 0$. Moreover, $\lambda \in \sigma_T(X)$ if and only if $\ran(\delta_{X - \lambda}) \subsetneq \ker \delta_{X - \lambda}$. Now set $\alpha_X = \delta_X + \delta_X^*$. As noted in \cite{Vasilescu-characterization}, $\lambda \notin \sigma_T(X)$ if and only if $\alpha_{X - \lambda}$ is invertible. Define an operator on the differential forms in $e_1,\ldots,e_d$ and $d\bar{z}_1,\ldots,d\bar{z}_d$ with smooth $\cK$-valued coefficients, as follows
    \[
    M_X(z) \omega= \alpha_{z - X}^{-1} (\bar{\partial} \alpha_{z - X}^{-1})^{n-1} (\omega \wedge e_1 \wedge \cdots \wedge e_d).
    \]
    In particular, $M_X$ will annihilate any form of positive degree in the $e_j$. Now for a vector $\xi \in \cK$ viewed as a degree $(0,0)$ form, we have that $M_X(z) \xi$ is a form of degree $n-1$  in $d\bar{z}_1,\ldots,d\bar{z}_d$ with smooth $\cK$-valued coefficients. In particular, every coefficient is an operator that depends smoothly on $z$ applied to $\xi$. By virtue of the fact that $\partial U$ is compact, we have a constant $C > 0$ that depends only on $X$ and $U$, such that 
    \[
    \|f(X)\| \leq C \|f\|_{\infty}.
    \]
    In fact, one can take $C = \int_{\partial U} \|M_X(x)\| dS$, where $dS$ represent the surface volume element on $\partial U$. 
    Here, $\|f\|_{\infty}$ is the norm in $A(B_V)$. In particular, this homomorphism is bounded. 
    Likewise, taking a matrix $F = (f_{ij}) \in M_m(A(B_V))$ and a unit vector $\xi \in \cK^m$, we get that
    \begin{align*}
    \|F(X) \xi\| 
    &= \left\| \left(\sum_{j=1}^d f_{ij}(X) \xi_j \right)_{i=1}^d \right\| \\ 
    &= \frac{1}{(2 \pi)^d} \left\| \left(\int_{\partial U} \sum_{j=1}^d f_{ij}(z) M_X(z) \xi_j  \wedge dz_1 \wedge \cdots \wedge dz_d\right)_{i=1}^d \right\| \\ 
    &= \left\| \int_{\partial U} F(z) \big(\left[I \otimes M_X(z)\right] \xi \wedge dz_1 \wedge \cdots \wedge dz_d \big) \right\| \\
    &\leq C \|F\|_{\infty}
    \end{align*}
    thus $\|F(X)\| \leq C \|F\|_{\infty}$. 
    Therefore, the map evaluation at $X$ map is a completely bounded homomorphism from $A(B_V)$ to $B(\cK)$. 
    It follows that the composition 
    \[
    A(\bB_{\max(V)}) \to A(B_V) \to B(\cK)
    \]
    is completely bounded. By Lemma \ref{lem:specrad_homo}, this implies that $\rho_{\max(V)}(X) \leq 1$. Now arguing similarly with $(1 + \varepsilon) X$ for $\varepsilon > 0$ small, we have that $\rho_{\max(V)}(X) < 1$.
\end{proof}

\begin{remark}
The proof can be carried out using the Cauchy-Weil formula of Taylor \cite{Taylor-analytic_func_calc} and the Oka-Weil theorem for $\sigma(X)$, since the latter is polynomially convex.
\end{remark}

\begin{remark}
     By the above theorem, the spectral radius $\rho_{\cE}(X)$ of a commuting tuple $X$ depends only on the norm of $\cE$ and not on the family of matrix norms.
\end{remark}

\begin{remark}
It is known that if $\Omega \subset \bC^d$ is a bounded open set and if $M_z = (M_{z_1} \ldots, M_{z_d})$ is the tuple of multiplication operators on the Bergman space $L^2_a(\Omega)$, then for every tuple $X = (X_1, \ldots, X_d)$ of commuting operators such that $\sigma_T(X) \subset \Omega$, the tuple $X$ is jointly similar to the restriction of an ampliation of $M_z^*$ to an invariant subspace (in fact the Bergman space can be replaced by any reasonable RKHS; see \cite[Application 5.27]{Curto-applications}, which is an extension of \cite[Theorem 2]{ball1977rota}; see also \cite[Theorem 3.1]{fong1978renorming} ). 
This implies that if $X_1, \ldots, X_d$ are commuting and each $X_i$ is similar to a strict contraction , then $X_1, \ldots, X_d$ are simultaneously similar to a tuple of strict contractions\footnote{This fails if we replace ``strict contractions" with ``contractions"; see \cite[Theorem 2.1]{pisier1998joint}. On the other hand, every tuple of commuting {\em matrices} such that each is similar to a contraction, is simultaneously similar to a tuple of contractions; see \cite[Theorem 1.2]{clouatre2019joint}}. 
We can now clarify this result using Theorem \ref{thm:comm_specrad}. 

Indeed, if $X \in B(\cK^d)$ and $X_i$ is similar to a strict contraction, then $\sigma(X_i) \subset \bD$. 
If this holds for all $i=1, \ldots, d$, then by the projection property of the Taylor spectrum, $\sigma_T(X) \subset \bD^d$. 
By Theorem \ref{thm:comm_specrad}, $X$ is similar to a point in $\bD_Q^{(\infty)}$ for $Q$ a basis of any operator space $\cE$ that quantizes $\ell^\infty_d$; in particular this holds if we take $\cE = \min(\ell^\infty_d)$. 
But then a point in $\bD_Q^{(\infty)}$ is nothing but a a tuple of strict contractions, so we are done. 
Note that Theorem \ref{thm:comm_specrad} yields the stronger result that $X$ is similar to a point in the smallest quantization of the unit ball of $\ell^\infty_d$, which we may denote $\bB_{\max(\ell^\infty_d)}^{(\infty)}$. It would be interesting to understand what the operational consequences of this are.     
\end{remark}

The following simple $2 \times 2$ example shows that two non-commuting matrices, each of which is similar to a strict contraction, need not be jointly similar to a pair of strict contractions. 

\begin{example}
Consider 
\[
A = \begin{pmatrix}
    \lambda & t \\ 0 & \lambda
\end{pmatrix} 
\quad, \quad 
B = \begin{pmatrix}
    \lambda & 0 \\ t & \lambda
\end{pmatrix}, 
\]
where $\lambda \in \bD$ and $t \geq 1$ is real. 
Let $S = \left(\begin{smallmatrix} a & b \\ c & d \end{smallmatrix} \right)$ be an invertible matrix. 
Without loss of generality, we may assume that $\det(S) = 1$ and that $c = 0$ (requiring without loss that $SAS^{-1}$ is upper triangular forces $S$ to be upper triangular). 
Then, 
\[
SAS^{-1} = \begin{pmatrix}
    \lambda & ta^2 \\ 0 & \lambda
\end{pmatrix} 
\quad, \quad 
S B S^{-1} = \begin{pmatrix}
    \lambda + tbd & -tb^2 \\ td^2 & \lambda - tdb
\end{pmatrix}, 
\]
which cannot be strict contractions simultaneously, because $ad=1$ and $t \geq 1$. 
In fact, this example shows that there is no ball $B \subset M_2(\bC)^2$ such that every pair of matrices that are separately similar to a pair of strict contractions is jointly similar to pair in $B$. 
\end{example}


\subsection{Quasinilpotent tuples}\label{subsec:quasinil}

The following proposition tells us that the notion of joint quasinilpotence is independent of the operator space structure.
\begin{proposition}\label{}
    Let $\cE$ and $\cF$ be operator space structures on $\C^d$. If $A \in B(\cH)$ is such that $\rho_{\cE}(A) = 0$, then $\rho_{\cF}(A) = 0$.
\end{proposition}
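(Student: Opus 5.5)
The plan is to compare any two operator space structures on $\C^d$ through a uniform constant and then exploit the homogeneity of the spectral radius. Since $\cE$ and $\cF$ are finite dimensional, the identity map $\id \colon \cE \to \cF$ is completely bounded; for instance, it factors as $\cE \to \min(V_\cE) \to \max(V_\cF) \to \cF$, where $V_\cE, V_\cF$ are the underlying normed spaces. The middle map is bounded with cb-norm at most some constant, because between finite dimensional spaces $\|\id \colon \min(V) \to \max(W)\|_{cb}$ is finite (bounded by, e.g., $d$ times the Banach–Mazur-type constant). So there is $C \geq 1$ with $\|X\|_{M_n(\cF)} \leq C\|X\|_{M_n(\cE)}$ for all $n$, and the same holds at the operator level $B(\cK)\otimes_{\min}\cE \to B(\cK)\otimes_{\min}\cF$.

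Next I would convert this norm comparison into a spectral radius comparison, $\rho_\cF(A) \leq C\rho_\cE(A)$, using Theorem \ref{thm:SS25}. Fix $r > \rho_\cE(A)$. Then $\rho_\cE(r^{-1}A) < 1$, so by Theorem \ref{thm:SS25} there is an invertible $S$ with $\|S^{-1}AS\|_{B(\cK)\otimes_{\min}\cE} < r$. Hence $\|S^{-1}AS\|_{B(\cK)\otimes_{\min}\cF} < Cr$.

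The trivial direction of Theorem \ref{thm:SS25} gives $\rho_\cF(S^{-1}AS) \leq \|S^{-1}AS\|_\cF$. Indeed, the spectral radius of $\iota_\cF(Y)$ is at most its norm in $B(\cK)\otimes_{\min}\cmax(\cF)$, and that norm equals $\|Y\|$ because $\iota_\cF$ is completely isometric. Combined with similarity invariance, this gives $\rho_\cF(A) < Cr$. Letting $r \downarrow \rho_\cE(A) = 0$ yields $\rho_\cF(A) = 0$.

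Alternatively, one can run the argument through Lemma \ref{lem:specrad_ineq}. Write $\rho_{\max(V_\cE)}$ for the top of the chain for $\cE$, and $\rho_{\min(V_\cF)}$ for the bottom of the chain for $\cF$. The crux is then comparing $\rho_{\max(V)}$ with $\rho_{\min(V)}$ up to a constant, which is the same cb-norm estimate, so I prefer the direct route above.

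The main obstacle is justifying that the constant $C$ is uniform across all matrix levels and the operator level, i.e. that $\id\colon\cE\to\cF$ is completely bounded. I would verify this by writing $\id = \sum_{j=1}^d e_j^*(\cdot)\, e_j$. Each rank-one map $x \mapsto f(x)y$ has cb-norm $\|f\|\,\|y\|$, since a bounded functional is automatically completely bounded with the same norm. This gives $C \leq \sum_j \|e_j^*\|_{\cE^*}\|e_j\|_\cF$.
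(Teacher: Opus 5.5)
Your proof is correct and follows essentially the same route as the paper's: use Theorem \ref{thm:SS25} to conjugate $A$ to a point of small $\cE$-norm, transfer that bound to $\cF$ via the complete boundedness of $\id\colon\cE\to\cF$ (also at the $B(\cK)\otimes_{\min}$ level), and finish with $\rho_\cF \leq \|\cdot\|_\cF$ and similarity invariance. Your rank-one decomposition argument for complete boundedness, and the resulting inequality $\rho_\cF(A)\leq C\rho_\cE(A)$, simply make explicit what the paper's proof asserts by citing Paulsen.
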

\begin{proof}
    By our assumption, for each $n \in \bN$, there exists an invertible $S_n$, such that 
    \[
    \left\| \sum_{j=1}^d S_n^{-1} A_j S_n \otimes Q_j^{\cE} \right\| < 1/n.
    \]
    Since the identity map $\cE \to \cF$ is completely bounded, the induced map $B(\cH) \otimes_{\min} \cE \to B(\cH) \otimes_{\min} \cF$ is completely bounded, as well and has the same cb norm \cite[Theorem 12.3]{PaulsenBook}. Therefore, there exists $C > 0$, such that
    \[
    \rho_{\cF}(A) \leq \left\| \sum_{j=1}^d S_n^{-1} A_j S_n \otimes Q_j^{\cF} \right\| \leq C/n.
    \]
    This implies the result.
\end{proof}


\subsection{The general case}\label{subsec:general}

Our goal in this section, is to show that, in general, the spectral radius detects the operator space structure. We will show that for any two selfadjoint operator spaces $\cE_1, \cE_2$ there exist a matrix tuple $X$ such that $\rho_{\cE_1}(X) \neq \rho_{\cE_1}(X)$. In particular, this will imply that $\rho_{\min(V)}\neq\rho_{\max(V)}$ whenever the normed space $V$ is a conjugation invariant normed space. 
We then show that $\rho_{\min(V)}\neq\rho_{\max(V)}$ for any normed space of dimension $d \geq 3$. 

We begin by clarifying what we mean by {\em selfadjoint operator space}. 
If $\mathcal{E}$ is a concrete operator space in $ B(\cH)$ for some Hilbert space $\cH$, we say that $\mathcal{E}$ is \textit{selfadjoint} if $e^*\in\mathcal{E}$ for all $e\in\mathcal{E}$. 
We say that a normed space $V$ admits a selfadjoint operator space structure if there exists a concrete selfadjoint operator space $\mathcal{E}$ and a surjective isometric linear map between $V$ and $\mathcal{E}$. Equivalently, one may start with a selfadjoint operator space and then regard it only as a normed space. For example, $\mathbb{C}^d$ with any $p$-norm admits a selfadjoint structure. More generally, we will see that any norm on $\mathbb{C}^d$ that is \textit{invariant under conjugation}, i.e., 
  \[
    \|x\|=\|\bar x\| \qquad x\in \mathbb{C}^d, 
  \]
admits a selfadjoint operator space structure.


In the context of abstract operator spaces, we follow the definition in \cite{BlecherKirkNealWerner} and define a \textit{abstract selfadjoint operator space} to be an operator space $X$ with an involution $\tau:X\to X$ such that 
  \begin{equation}\label{eq: abstract op space}
    \|[\tau(x_{ji})]\|=\|[x_{ij}]\|, \qquad n\in\mathbb{N}, [x_{ij}]\in M_n(X).
  \end{equation}
If not further specified, a selfadjoint operator space means a concrete selfadjoint operator space. 
The following proposition verifies that this is indeed a suitable abstract definition of selfadjoint operator space.

\begin{proposition}\label{prop:abs selfadj opspace}
    Let $X$ be an abstract selfadjoint operator space. Then there exists a Hilbert space $\cH$ and a completely isometric map $i:X\to B(\cH)$ such that $i(X)$ is a selfadjoint operator space and $i(\tau(x))=i(x)^*$.
\end{proposition}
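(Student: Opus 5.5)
We plan to realize $X$ concretely and then correct the embedding so that the abstract involution $\tau$ becomes the operator adjoint. First choose, by Ruan's theorem, a completely isometric embedding $j\colon X\to B(\cK)$. The map $\tau$ is conjugate-linear, since it is modelled on the adjoint. The natural candidate is the $2\times 2$ symmetrization
\[
i(x)=\begin{pmatrix} 0 & j(x)\\ j(\tau(x))^* & 0\end{pmatrix}\in B(\cK\oplus\cK).
\]
This map is complex linear, because $x\mapsto j(\tau(x))^*$ is a composition of two conjugate-linear maps. Using $\tau^2=\id$, we get
\[
i(x)^*=\begin{pmatrix}0 & j(\tau(x))\\ j(x)^* & 0\end{pmatrix}=\begin{pmatrix}0 & j(\tau(x))\\ j(\tau(\tau(x)))^* & 0\end{pmatrix}=i(\tau(x)).
\]
Hence $i(X)$ is selfadjoint and $i$ intertwines $\tau$ with the adjoint.

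The main step is to show that $i$ is completely isometric. For $[x_{kl}]\in M_n(X)$, after a canonical shuffle, $i_n([x_{kl}])$ is unitarily equivalent to
\[
\begin{pmatrix}0 & [j(x_{kl})]\\ [j(\tau(x_{kl}))^*] & 0\end{pmatrix}.
\]
An off-diagonal block operator has norm equal to the maximum of the norms of its two blocks. The first block has norm $\|[x_{kl}]\|$ because $j$ is a complete isometry. The second block $[j(\tau(x_{kl}))^*]_{k,l}$ is the adjoint of $[j(\tau(x_{lk}))]_{k,l}$. Its norm therefore equals $\|[\tau(x_{lk})]_{k,l}\|_{M_n(X)}$, again because $j$ is a complete isometry. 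By \eqref{eq: abstract op space} this equals $\|[x_{kl}]\|$. So both blocks have the same norm, and $\|i_n([x_{kl}])\|=\|[x_{kl}]\|$ at every matrix level.

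The only potential obstacle is index bookkeeping. One must check that the transposition occurring in the adjoint of a block matrix is exactly the one compensated by the convention $[\tau(x_{ji})]$ in \eqref{eq: abstract op space}. One must also confirm that $\tau$ is conjugate-linear, so that $i$ is linear. If $\tau$ were instead intended to be linear, I would compose with the transpose map on $B(\cK)$ rather than the adjoint, but the definition in \cite{BlecherKirkNealWerner} uses a conjugate-linear involution, so the construction above suffices.
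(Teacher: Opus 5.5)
Your proposal is correct and is the same construction as the paper's: the paper takes $x\mapsto\begin{pmatrix}0&\pi(x)\\ \pi(\tau(x))^*&0\end{pmatrix}$ and simply asserts that it is linear and completely isometric and that $i(x)^*=i(\tau(x))$. You supply the verification, and your index check against the $[\tau(x_{ji})]$ convention in \eqref{eq: abstract op space} is right. Your closing hedge about a linear $\tau$ is moot, since $i(\tau(x))=i(x)^*$ with $i$ linear already forces $\tau$ to be conjugate-linear.
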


\begin{proof}
    Let $X$ be an abstract selfadjoint operator space, $\tau:X\to X$ an involution satisfying Equation \ref{eq: abstract op space}, and $\pi:X\to B(\cH)$ a completely isometric map. Define
      \[
        i:X\to M_2( B(\cH)), x\mapsto \begin{pmatrix} 0 & \pi(x)\\ \pi(\tau(x))^* & 0 \end{pmatrix}.
      \]
    It is obvious that $i$ is well-defined, linear and completely isometric. Moreover, $i(x)^*=i(\tau(x))$. Thus $i(X)$ is a selfadjoint operator space and the proposition proven.
\end{proof}


Note that a selfadjoint Banach space structure on $\C^d$ is $V$ equipped with an isometric antilinear involution $\tau \colon V \to V$. 

\begin{lemma}\label{lem: isometric adjoint}
    Let $V$ be a selfadjoint Banach space structure on $\C^d$ (in particular, if the norm of $V$ is conjugation-invariant). Then, for every $n \in \bN$ and every $X\in M_n(V)$,
      \[
        \|X\|_{\min(V)}=\|\tau(X)\|_{\min(V)} \qquad \textup{and} \qquad \|X\|_{\max(V)}=\|\tau(X)\|_{\max(V)}.
      \]
      Therefore, both $\min(V)$ and $\max(V)$ are selfadjoint operator spaces with the involution induced by complex conjugation.
\end{lemma}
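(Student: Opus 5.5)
We have to show that for $X\in M_n(V)$, i.e. $X=\sum_j X_j\otimes e_j$ with $X_j\in M_n$, the matrix norms in $\min(V)$ and $\max(V)$ are unchanged when $X$ is replaced by $\tau(X)$. Here $\tau(X)$ is understood as in \eqref{eq: abstract op space}: $\tau(X)=[\tau(x_{ji})]$, the entrywise involution composed with the transpose. Once these two norm identities are proved, the pair $(\min(V),\tau)$ and the pair $(\max(V),\tau)$ are abstract selfadjoint operator spaces. Proposition \ref{prop:abs selfadj opspace} then represents each of them completely isometrically as a concrete selfadjoint operator space, which gives the last sentence of the lemma. So the work is the two identities, and for each we use the defining formula of the structure.

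\emph{The minimal structure.} Recall that $\|[x_{ij}]\|_{\min(V)}=\sup\{\|[f(x_{ij})]\|_{M_n}: f\in V^*,\ \|f\|\le 1\}$. Given such an $f$, define $f^\tau(v)=\overline{f(\tau(v))}$. Since $\tau$ is antilinear and isometric, $f^\tau$ is again a linear functional of norm at most $1$, and $f\mapsto f^\tau$ is a bijection of the unit ball of $V^*$ (it is its own inverse). Now compute
\[
[f(\tau(x_{ji}))]_{ij}=\bigl[\overline{f^\tau(x_{ji})}\bigr]_{ij}=[f^\tau(x_{ij})]^*.
\]
A matrix and its adjoint have the same norm, so $\|[f(\tau(x_{ji}))]\|=\|[f^\tau(x_{ij})]\|$. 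Taking the supremum over $f$ gives $\|\tau(X)\|_{\min(V)}=\|X\|_{\min(V)}$.

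\emph{The maximal structure.} Use $\|X\|_{\max(V)}=\sup\|[\phi(x_{ij})]\|$, where $\phi$ runs over contractive linear maps $\phi:V\to B(\cH)$. For such a $\phi$, set $\phi^\tau(v)=\phi(\tau(v))^*$. This map is linear, being a composition of two antilinear operations, and it is contractive, since $\tau$ is isometric and the adjoint preserves norms. Again $\phi\mapsto\phi^\tau$ is an involutive bijection of the set of contractions. We have
\[
[\phi(\tau(x_{ji}))]_{ij}=[\phi^\tau(x_{ji})^*]_{ij}=\bigl([\phi^\tau(x_{ij})]\bigr)^*,
\]
and taking norms and the supremum over $\phi$ yields $\|\tau(X)\|_{\max(V)}=\|X\|_{\max(V)}$.

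\emph{Conjugation-invariant norms.} If the norm satisfies $\|x\|=\|\bar x\|$, take $\tau$ to be coordinatewise complex conjugation; it is an isometric antilinear involution, so this case is covered by the above.

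The only step that needs care is the bookkeeping of the transpose: it is exactly the transpose built into $\tau(X)=[\tau(x_{ji})]$ that lets the identities close up via the adjoint. Beyond that we use only the two extremal formulas for the $\min$ and $\max$ norms.
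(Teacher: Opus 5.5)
Your proof is correct and is essentially the paper's argument written out. The paper only invokes Paulsen's extremal characterizations of the $\min(V)$ and $\max(V)$ matrix norms. Your involutive bijections $f\mapsto f^\tau$ of the unit ball of $V^*$ and $\phi\mapsto\phi^\tau$ of the contractions $V\to B(\cH)$, together with the check that $\tau(X)=[\tau(x_{ji})]$ turns each matrix $[f(x_{ij})]$ or $[\phi(x_{ij})]$ into the adjoint of the corresponding matrix for $f^\tau$ or $\phi^\tau$, supply exactly the details left implicit there; the final step realizing both spaces concretely via the paper's proposition on abstract selfadjoint operator spaces is also fine.
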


\begin{proof}
    The lemma follows from the characterization of $\|\cdot\|_{\min(V)}$ in \cite[Theorem 14.1]{Paulsen-book} and $\|\cdot\|_{\max(V)}$ in \cite[Theorem 14.2]{Paulsen-book}.
\end{proof}

The next lemma is the starting point for the proofs of Theorem \ref{thm:selfadj_ineq} and Theorem \ref{thm:min_neq_max}, and connects the spectral radius with the underlying operator space norm. To simplify notations, we fix a completely isometric embedding of $j \colon \cE \to B(\cH)$, such that $j(\tau(x)) = j(x)^*$ and identify $\cE$ with its image. In particular, we treat $\tau(X) = X^*$ in $M_n(\cE)$. It is clear from the proof of the following lemma, that it is independent of the choice of $j$. 

\begin{lemma}\label{lem: radius=norm}
    Let $\mathcal{E}$ be a selfadjoint operator space. Then, for every $X \in M_n(\cE)$ selfadjoint $\rho_{\cE}(X) = \|X\|_{\cE}$. In particular, for every $X \in M_n(\cE)$,
      \[
      \|X\|_{\mathcal{E}}=\rho_\mathcal{E}\left(\begin{pmatrix} 0 & X\\ \tau(X) & 0\end{pmatrix}\right).
      \]
\end{lemma}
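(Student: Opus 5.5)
The plan is to prove $\rho_\cE(X)\le\|X\|_\cE$ for every $X$ and then, for selfadjoint $X$, the reverse inequality. The upper bound is immediate. By \eqref{eq:spec_rad}, $\rho_\cE(X)=\rho(\iota_\cE(X))\le\|\iota_\cE(X)\|$, and $\iota_\cE\colon\cE\to\cmax(\cE)$ is a complete isometry, so $\|\iota_\cE(X)\|=\|X\|_{M_n(\cE)}$.

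For the reverse inequality, use that $\cmax(\cE)$ is built from a selfadjoint operator space whose involution is compatible with the universal property. Concretely, one has to check that the inclusion $\iota_\cE$ can be chosen $*$-preserving, i.e.\ $\iota_\cE(\tau(x))=\iota_\cE(x)^*$. This is the step I expect to need the most care. If $\cmax(\cE)$ is understood as the universal C*-algebra of the selfadjoint operator space, meaning generated by $\cE$ subject to $x^*=\tau(x)$, then this holds by construction.

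If instead $\cmax(\cE)$ is the universal C*-algebra of $\cE$ as a plain operator space, I would bypass the issue as follows. Since $\rho_\cE(X)$ is invariant under completely isometric changes (Theorem \ref{thm:SS25} characterises $\rho_\cE<1$ intrinsically via similarity), it is enough to compute the spectral radius through a representation. Take the concrete selfadjoint embedding $\cE\subset B(\cH)$ with $\tau(x)=x^*$. By Theorem \ref{thm:SS25}, $\rho_\cE(X)$ equals the infimum of $\|S^{-1}XS\|_{M_n(\cE)}$ over invertible $S$. I would then show that for selfadjoint $X\in M_n(\cE)\subset M_n(B(\cH))$ this infimum is at least $\|X\|$. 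The reason is that $S^{-1}XS$ is the image of $X$ under $\Ad_S\otimes\id$, which is a similarity on $M_n\otimes B(\cH)$, and the operator norm of $(S^{-1}\otimes I)X(S\otimes I)$ is at least its spectral radius in $M_n(B(\cH))$. That spectral radius is similarity invariant and equals $\|X\|$, because $X$ is a selfadjoint operator in the C*-algebra $M_n(B(\cH))$. Thus $\|S^{-1}XS\|\ge\rho(X)=\|X\|$, so $\rho_\cE(X)\ge\|X\|_\cE$. The same argument can be phrased without Theorem \ref{thm:SS25}: the canonical surjection $\cmax(\cE)\to C^*(\cE)\subset B(\cH)$ is a $*$-homomorphism, spectral radius decreases under it, and the image of $\iota_\cE(X)$ is the selfadjoint operator $X$, whose spectral radius is its norm.

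For the "in particular" part, given any $X\in M_n(\cE)$, set $Y=\begin{pmatrix}0&X\\ \tau(X)&0\end{pmatrix}\in M_{2n}(\cE)$, where $\tau(X)=[\tau(x_{ji})]=X^*$. Then $Y$ is selfadjoint, so the first part gives $\rho_\cE(Y)=\|Y\|_\cE$. Moreover $\|Y\|=\max(\|X\|,\|X^*\|)=\|X\|$, either by \eqref{eq: abstract op space} or directly in $B(\cH)$. Independence from the chosen embedding $j$ follows because both sides are defined intrinsically, via $\cmax$ and the matrix norms.
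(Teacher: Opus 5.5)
Your proof is correct. The argument you give at the end of the lower-bound paragraph is exactly the paper's proof: push $\iota_\cE(X)$ through the canonical surjective $*$-homomorphism $\cmax(\cE)\to C^*(\cE)$, under which the spectral radius can only drop, and use that the image $X$ is selfadjoint in $M_n(C^*(\cE))$, so its spectral radius equals its norm.

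Your primary route via Theorem \ref{thm:SS25} is also valid, since homogeneity gives $\rho_\cE(X)=\inf_S\|S^{-1}XS\|$ and each term is bounded below by $\rho(X)=\|X\|$ in $M_n(B(\cH))$. However, it routes an elementary fact through Paulsen's similarity theorem.

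Your opening worry about making $\iota_\cE$ $*$-preserving is unnecessary. With $\cmax(\cE)$ the universal C*-algebra of the plain operator space, $\iota_\cE$ generally is not $*$-preserving. This is precisely why the paper passes to the quotient $C^*(\cE)$.
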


\begin{proof}
    Let $X, Y\in M_n(\mathcal{E})$. Then
      \be \label{eq:mat_norm}
        \left\|
        \begin{pmatrix}
        0 & x\\
        y & 0
        \end{pmatrix}
        \right\|_{\mathcal{E}}
        =
        \max\{\|x\|_\mathcal{E},\|y\|_\mathcal{E}\}
      \ee
    since $\|\cdot\|_\mathcal{E}$ is an operator space matrix norm. Since $\tau(X)=X^*$ for every $X \in M_n(\cE)$, the element $\begin{pmatrix} 0 & X\\ \tau(X) & 0\end{pmatrix}$ is selfadjoint. Therefore, the second part of the lemma follows from the first. Denote by $i_{\max}$ the embedding of $\cE$ into $C^*_{\max}(\cE)$ and let $A = C^*(\cE) = C^*(j(\cE))$. By the universal property of the maximal $C^*$-algebra, there exists a surjective $*$-homomorphism $\pi \colon C^*_{\max}(\cE) \to A$, such that for every $x \in \cE$,  $\pi(i_{\max}(x)) = x$. Now let $X \in M_n(\cE)$ be selfadjoint and let $\rho_A(X)$ denote the spectral radius of $X$ as an element of $M_n(A)$. In particular, $\rho_A(X) = \|X\|_{\cE}$. On the other hand, we have that $\rho_{\cE}(X) = \rho(i_{\max}(X)) \leq \|X\|_{\cE}$. However, since $\pi$ is a quotient map,
    \[
    \|X\|_{\cE} = \rho_A(X) = \rho_A(\pi(i_{\max}(X))) \leq \rho_{\cE}(X) \leq \|X\|_{\cE}.
    \]
    Therefore, we get the desired equality.

\end{proof}

As seen with the row and column spaces, two different different operator space structures can have the same spectral radii over all matrix tuples. In contrast, the next theorem shows that this is not true for selfadjoint operator space structures.

\begin{theorem} \label{thm:selfadj_ineq}
    Let $\mathcal{E}_1, \mathcal{E}_2$ be selfadjoint operator spaces, and let $i:\mathcal{E}_1\to \mathcal{E}_2$ be a linear, $*$-preserving but not completely isometric map. Then, there exist $n\in\mathbb{N}$ and $a\in M_n(\mathcal{E}_1)$ such that 
      \[
        \rho_{\mathcal{E}_1}(a)\neq\rho_{\mathcal{E}_2}(i(a)).
      \]
\end{theorem}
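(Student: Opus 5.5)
The plan is to use Lemma \ref{lem: radius=norm} to turn the spectral radius back into the matrix norm. Since $i$ is linear and $*$-preserving but not completely isometric, there are $n\in\bN$ and $x\in M_n(\cE_1)$ with $\|x\|_{\cE_1}\neq\|i_n(x)\|_{\cE_2}$, where $i_n$ denotes the ampliation of $i$. We then build a selfadjoint dilation of $x$ and apply the lemma in both spaces.

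Put
\[
a=\begin{pmatrix} 0 & x\\ \tau_1(x) & 0\end{pmatrix}\in M_{2n}(\cE_1).
\]
Here $\tau_1$ is the involution on $\cE_1$, acting on matrices by $[x_{kl}]\mapsto[\tau_1(x_{lk})]$, so that $\tau_1(x)=x^*$ under the fixed concrete embedding. By Lemma \ref{lem: radius=norm},
\[
\rho_{\cE_1}(a)=\|x\|_{\cE_1}.
\]
Next compute $i_{2n}(a)$. Since $i$ is $*$-preserving, $i(\tau_1(y))=\tau_2(i(y))$ for every $y\in\cE_1$. Applying this entrywise and noting that $i$ commutes with transposition of matrix entries, we get
\[
i_{2n}(a)=\begin{pmatrix} 0 & i_n(x)\\ \tau_2(i_n(x)) & 0\end{pmatrix}.
\]
Lemma \ref{lem: radius=norm} in $\cE_2$ then gives $\rho_{\cE_2}(i_{2n}(a))=\|i_n(x)\|_{\cE_2}$. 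This differs from $\rho_{\cE_1}(a)$ by the choice of $x$, which finishes the argument with the matrix level $2n$ in place of $n$.

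There is no real obstacle here. The only point to check is the bookkeeping for $\tau$ on matrix levels, namely that $i_n$ intertwines $X\mapsto\tau(X)$ (conjugate transpose). This follows from the entrywise definition in \eqref{eq: abstract op space} together with the hypothesis $i(e^*)=i(e)^*$. One should also note that Lemma \ref{lem: radius=norm} applies to every matrix level, so taking size $2n$ causes no issue. If $i$ is not assumed injective the argument is unchanged, since we only use that some matrix norm changes.
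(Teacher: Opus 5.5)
Your proposal is correct and follows the paper's proof essentially verbatim: you pick $x$ whose matrix norm changes under $i$, form the selfadjoint dilation $\left(\begin{smallmatrix} 0 & x \\ \tau_1(x) & 0\end{smallmatrix}\right)$, use $*$-preservation to identify its image as the corresponding dilation of $i_n(x)$, and read off both spectral radii from Lemma \ref{lem: radius=norm}. Your explicit check that the ampliation of $i$ intertwines the matrix-level involutions is exactly the step the paper uses implicitly when it writes down $i(a)$.
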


\begin{proof}
    Since $i$ is not completely isometric, there exist $n\in\mathbb N$ and $b\in M_n(\cE_1)$ such that
    \[
      \|b\|_{\cE_1}\neq\|i(b)\|_{\cE_2}.
    \]
    Define $a=\begin{pmatrix} 0 & b\\ b^* & 0\end{pmatrix}$. Since $i$ is $*$-preserving, we have
      \[
        i(a)=\begin{pmatrix} 0 & i(b)\\ i(b)^* & 0\end{pmatrix}.
      \]
    Lemma \ref{lem: radius=norm} therefore implies that
      \[
        \rho_{\cE_1}(a)=\|b\|_{\cE_1}\neq\|i(b)\|_{\cE_2}=\rho_{\cE_2}(i(a)),
      \]
    which proves the claim.
\end{proof}

\begin{example}
    As we saw in Lemma \ref{lem: isometric adjoint}, if $V=(\mathbb{C}^d,\|\cdot\|_V)$ is a Banach space such that pointwise complex conjugation is an isometric operation on $V$, then the minimal and maximal operator space structures over $V$ form a selfadjoint operator space via the involution given by complex conjugation. Hence, Theorem \ref{thm:selfadj_ineq} shows that there exists a tuple of matrices $X$ such that
      \[
        \rho_{\min(V)}(X)<\rho_{\max(V)}(X)
      \]
    whenever $\min(V)\neq\max(V)$. In particular, this applies to every $p$-norm on $\mathbb{C}^d$.
    In Section \ref{sec:dependence}, we will present another proof of the above result, which is of interest in its own.
\end{example}

It is time to prove that $\rho_{\min(V)}\neq\rho_{\max(V)}$ for every finite dimensional normed space $V$ of dimension at least $3$. The proof is inspired by \cite{paulsen1992representations}. We need the following lemma.

\begin{lemma}\label{lem: radii linear isomorphism}
    Let $\cE_1$ and $\cE_2$ be operator spaces which are linearly isomorphic as Banach spaces, and let
  \[
    u:\cE_1\to \cE_2
  \]
be a linear isomorphism. Suppose that $\cE_2$ is selfadjoint and that there exists
$c\in M_n(\cE_2)$ such that
  \[
    \|c\|_{\max(\cE_2)}
    >
    \|u\|\,\|u^{-1}\|\,
    \|c\|_{\min(\cE_2)}.
  \]
Then there exists $X\in M_{2n}(\cE_1)$ such that
  \[
    \rho_{\min(\cE_1)}(X)<\rho_{\max(\cE_1)}(X).
  \]
\end{lemma}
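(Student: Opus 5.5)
The plan is to build a selfadjoint element in $M_{2n}(\cE_2)$, whose spectral radii with respect to $\min(\cE_2)$ and $\max(\cE_2)$ equal the corresponding norms of $c$ by Lemma \ref{lem: radius=norm}. We then pull this element back to $\cE_1$ via $u^{-1}$ and control the loss with a comparison inequality for spectral radii under completely bounded linear maps.

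\emph{Step 1 (comparison lemma).} Let $\cE,\cF$ be operator spaces, let $T\colon\cE\to\cF$ be linear and completely bounded, and let $X\in M_m(\cE)$. We claim that $\rho_\cF(T(X))\le \|T\|_{cb}\,\rho_\cE(X)$. Indeed, if $\rho_\cE(X)<r$, then Theorem \ref{thm:SS25} (applied to $r^{-1}X$) gives $S\in \GL_m$ with $\|S^{-1}XS\|_{\cE}<r$. Since $T$ acts only on the $\cE$-coordinate, it commutes with conjugation: $T(S^{-1}XS)=S^{-1}T(X)S$. Hence $\|S^{-1}T(X)S\|_\cF<\|T\|_{cb}\,r$, and so $\rho_\cF(T(X))\le \|T\|_{cb}\,r$ because the spectral radius is similarity invariant and bounded by the norm. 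Letting $r\downarrow\rho_\cE(X)$ gives the claim.

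\emph{Step 2 (the selfadjoint element).} Since $\cE_2$ is selfadjoint, its involution is an isometric antilinear involution of the underlying normed space. By Lemma \ref{lem: isometric adjoint} (whose proof via the characterizations of the $\min$ and $\max$ norms works verbatim here), both $\min(\cE_2)$ and $\max(\cE_2)$ are selfadjoint operator spaces with the same involution. Set
\[
a=\begin{pmatrix}0&c\\ \tau(c)&0\end{pmatrix}\in M_{2n}(\cE_2).
\]
By Lemma \ref{lem: radius=norm} applied to each structure,
\[
\rho_{\min(\cE_2)}(a)=\|c\|_{\min(\cE_2)}
\quad\text{and}\quad
\rho_{\max(\cE_2)}(a)=\|c\|_{\max(\cE_2)}.
\]
Define $X=u^{-1}(a)\in M_{2n}(\cE_1)$.

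\emph{Step 3 (estimates).} By the universal properties of $\min$ and $\max$, $\|u^{-1}\colon \min(\cE_2)\to\min(\cE_1)\|_{cb}=\|u^{-1}\|$ (maps into a minimal space) and $\|u\colon\max(\cE_1)\to\max(\cE_2)\|_{cb}=\|u\|$ (maps out of a maximal space). Step 1 then gives
\[
\rho_{\min(\cE_1)}(X)\le \|u^{-1}\|\,\|c\|_{\min(\cE_2)}
\quad\text{and}\quad
\|c\|_{\max(\cE_2)}=\rho_{\max(\cE_2)}(u(X))\le\|u\|\,\rho_{\max(\cE_1)}(X).
\]
Combining these with the hypothesis yields
\[
\rho_{\max(\cE_1)}(X)\ge \frac{\|c\|_{\max(\cE_2)}}{\|u\|}>\|u^{-1}\|\,\|c\|_{\min(\cE_2)}\ge\rho_{\min(\cE_1)}(X),
\]
which is the desired strict inequality.

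The main obstacle is Step 1. The key point is that conjugation by scalar matrices commutes with $\id\otimes T$, so the similarity characterization in Theorem \ref{thm:SS25} transports through arbitrary completely bounded linear maps, not only homomorphisms. The remaining steps are bookkeeping with the cb norms of maps into $\min$ and out of $\max$, together with checking that $\min(\cE_2)$ and $\max(\cE_2)$ inherit selfadjointness.
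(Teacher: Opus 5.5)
Your proof is correct and is the paper's argument: the same selfadjoint element $a=\begin{pmatrix}0&c\\ c^*&0\end{pmatrix}$, Lemma~\ref{lem: radius=norm} applied in both quantizations, the pullback $X=u^{-1}(a)$, and the cb-norm identities for maps into $\min$ and out of $\max$. The only difference is that you supply two justifications the paper merely asserts: you prove the comparison $\rho_\cF(T(X))\le\|T\|_{cb}\,\rho_\cE(X)$ via Theorem~\ref{thm:SS25}, and you derive the selfadjointness of $\min(\cE_2)$ and $\max(\cE_2)$ from Lemma~\ref{lem: isometric adjoint}.
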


\begin{proof}
Set
  \[
    \kappa=\|u\|\,\|u^{-1}\|.
  \]
Since $\cE_2$ is selfadjoint, we can define
  \[
    a=
    \begin{pmatrix}
    0 & c\\
    c^* & 0
    \end{pmatrix}
    \in M_{2n}(\cE_2).
  \]
By Lemma \ref{lem: radius=norm},
  \[
    \rho_{\min(\cE_2)}(a)
    =
    \|a\|_{\min(\cE_2)}
    =
    \|c\|_{\min(\cE_2)},
  \]
and
  \[
    \rho_{\max(\cE_2)}(a)
    =
    \|c\|_{\max(\cE_2)}.
  \]
Hence, by assumption,
  \[
    \rho_{\max(\cE_2)}(a)
    >
    \kappa\,\rho_{\min(\cE_2)}(a).
  \]
Now put
  \[
    X=u^{-1}(a)\in M_{2n}(\cE_1).
  \]
We know that the spectral radius decreases under completely contractive maps, and for a completely bounded map $T:\cE_1\to \cE_2$ it follows that
  \[
    \rho_{\cE_2}(T(x))
    \leq
    \|T\|_{\mathrm{cb}}\rho_{\cE_1}(x).
  \]
For minimal and maximal operator space structures one has
  \[
    \|u:\min(\cE_1)\to \min(\cE_2)\|_{\mathrm{cb}}=\|u\|,
    \qquad
    \|u:\max(\cE_1)\to \max(\cE_2)\|_{\mathrm{cb}}=\|u\|,
  \]
and similarly for $u^{-1}$. 
Therefore, 
  \[
    \rho_{\min(\cE_1)}(X)
    \leq
    \|u^{-1}\|\,\rho_{\min(\cE_2)}(a),
  \]
whereas
  \[
    \rho_{\max(\cE_1)}(X)
    \geq
    \frac{1}{\|u\|}\rho_{\max(\cE_2)}(a).
  \]
Since
  \[
    \rho_{\max(\cE_2)}(a)   
    >
    \|u\|\,\|u^{-1}\|\rho_{\min(\cE_2)}(a),
  \]
we obtain
  \[
    \rho_{\max(\cE_1)}(X)
    >
    \rho_{\min(\cE_1)}(X).
  \]
This completes the proof.
\end{proof}

For a finite-dimensional Banach space $F$, define
  \[
    \alpha(F)
    =
    \sup_{m\in\mathbb N}
    \sup_{0\neq c\in M_m(F)}
    \frac{
    \|c\|_{M_m(\max(F))}
    }{
    \|c\|_{M_m(\min(F))}
    }.
  \]
We shall require the following inequality: 
\be\label{eq: alpha}
\alpha(\ell^2_d) > \sqrt{d} \quad \textrm{ for all } \, d \geq 3.
\ee
In fact, one has the well known lower bounds (see \cite[Theorem 14.3]{Paulsen-book})
\[ 
\alpha(\ell^2_d) \geq \frac{d+1}{2} \,\, \textrm{ for } d \textrm{  odd, and }\,\,  \alpha(\ell^2_d) \geq \frac{\sqrt{d^2+2d}}{2} \,\, \textrm{ for } d \textrm{ even.}
\]
For completeness, we give a brief justification for \eqref{eq: alpha} as follows. Let $L_1,\dots,L_d$ be the creation operators on the exterior algebra
$\Lambda(\mathbb C^d)$. Then the map
\[
\phi:\ell_2^d\to B(\Lambda(\mathbb C^d)),
\qquad e_i\mapsto L_i,
\]
is isometric \cite[Theorem 9.3.1]{Pisier-book}. Set
\[
c=\sum_{i=1}^d L_i^*\otimes e_i
\in M_{2^d}(\ell_2^d).
\]
Then
\[
\|c\|_{M_{2^d}(\min(\ell_2^d))}=1,
\]
whereas
\[
\|c\|_{M_{2^d}(\max(\ell_2^d))}
\geq
\left\|\sum_{i=1}^d L_i^*\otimes L_i\right\|
\geq
\sqrt{2(d-1)}, 
\]
which is strictly bigger than $\sqrt{d}$ for $d \geq 3$.

\begin{theorem} \label{thm:min_neq_max}
Let $V$ be a complex Banach space with $\dim V\geq 3$. Then there exist
$n\in\mathbb N$ and $X\in M_n(V)$ such that
  \[
    \rho_{\min(V)}(X)<\rho_{\max(V)}(X).
  \]
\end{theorem}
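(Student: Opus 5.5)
The plan is to reduce to Lemma \ref{lem: radii linear isomorphism}, taking $\cE_1 = V$ and $\cE_2$ to be a selfadjoint space linearly isomorphic to $V$ with controlled Banach–Mazur distance. Since the minimal and maximal structures of $V$ depend only on the Banach space $V$, we may replace $V$ by a subspace. First step: choose a $3$-dimensional subspace $W \subset V$; if $\dim V > 3$ we will produce $X$ over $W$ and then transport it to $V$. For the transport, recall that $\min$ is injective: $\min(W) \subset \min(V)$ completely isometrically, so $\rho_{\min(V)}(X) \le \rho_{\min(W)}(X)$ by the contractive-map monotonicity of spectral radius (as used in the proof of Lemma \ref{lem: radii linear isomorphism}). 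For the maximal side, the inclusion $\max(W) \to \max(V)$ is completely contractive, which only gives $\rho_{\max(V)}(X)\le\rho_{\max(W)}(X)$ — the wrong direction. To fix this, I would instead work with a projection: by Kadec–Snobar there is a projection $P\colon V \to W$ with $\|P\| \le \sqrt{3}$. Then $P\colon \max(V)\to\max(W)$ has cb-norm $\|P\|$, so $\rho_{\max(W)}(X) \le \sqrt 3\,\rho_{\max(V)}(X)$ for $X\in M_n(W)$. Hence it suffices to find $X \in M_n(W)$ with $\rho_{\max(W)}(X) > \sqrt3\,\rho_{\min(W)}(X)$, which forces $\rho_{\max(V)}(X) > \rho_{\min(W)}(X)\ge \rho_{\min(V)}(X)$.

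Second step: produce such an $X$ over $W$, $\dim W = 3$. Take $\cE_2 = \ell^2_3$, which is conjugation invariant and hence selfadjoint in both min and max structures by Lemma \ref{lem: isometric adjoint}. By John's theorem there is an isomorphism $u\colon W \to \ell^2_3$ with $\|u\|\|u^{-1}\|\le\sqrt3$. I would then rerun the proof of Lemma \ref{lem: radii linear isomorphism} with the extra factor: with $a = \begin{pmatrix}0&c\\c^*&0\end{pmatrix}$ and $X = u^{-1}(a)$ we get $\rho_{\max(W)}(X)/\rho_{\min(W)}(X) \ge \rho_{\max}(a)/(\kappa\,\rho_{\min}(a)) = \|c\|_{\max}/(\kappa\|c\|_{\min})$. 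So we need $\|c\|_{\max(\ell^2_3)}/\|c\|_{\min(\ell^2_3)} > \kappa\sqrt 3 = 3$, i.e. $\alpha(\ell^2_3) > 3$.

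This is the main obstacle: the bound \eqref{eq: alpha} gives only $\alpha(\ell^2_3)\ge 2$, and in fact $\alpha(\ell^2_3)$ is bounded by roughly $\sqrt3$ times small constants, so the double loss $\sqrt3\cdot\sqrt3$ is too costly. I would therefore avoid the projection loss when $\dim V = 3$ (there $W=V$ and Lemma \ref{lem: radii linear isomorphism} applies directly, requiring only $\alpha(\ell^2_3) > \sqrt 3$, which \eqref{eq: alpha} provides since $\sqrt{2(d-1)}=2>\sqrt3$). For $\dim V = d > 3$, rather than passing to a subspace, apply Lemma \ref{lem: radii linear isomorphism} directly with $\cE_2=\ell^2_d$ and John's bound $\kappa\le\sqrt d$, using $\alpha(\ell^2_d)\ge\sqrt{2(d-1)}>\sqrt d$ from \eqref{eq: alpha}; for infinite-dimensional $V$ one would have to fall back on a finite-dimensional subspace argument, where I would try to use injectivity of $\min$ together with the fact that $\max$-norms of elements of $M_n(W)$ can be computed via the quotient/extension description and check whether a $\sqrt{d}$-loss can be avoided there.
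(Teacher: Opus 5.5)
Your final argument is exactly the paper's proof: John's theorem gives $u\colon V\to\ell^2_d$ with $\|u\|\,\|u^{-1}\|\le\sqrt d$, the estimate $\alpha(\ell^2_d)>\sqrt d$ comes from \eqref{eq: alpha}, and Lemma \ref{lem: radii linear isomorphism} is applied with $\cE_2=\ell^2_d$ (selfadjoint in both extreme structures by Lemma \ref{lem: isometric adjoint}); the separate case $d=3$ and the Kadec--Snobar detour are unnecessary, since the one argument covers every finite $d\geq 3$. The paper only treats finite $d=\dim V$ (its proof rests on John's theorem), so you do not need to settle the infinite-dimensional case; if you want it anyway, your first-paragraph projection scheme works once you pick $W$ by Dvoretzky's theorem rather than John's, because then $\|u\|\,\|u^{-1}\|\,\|P\|\le(1+\epsilon)\sqrt m<\alpha(\ell^2_m)$ for small $\epsilon$.
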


\begin{proof}
Let $d=\dim V\geq 3$. By the complex version of John's ellipsoid theorem (see \cite[Theorem 15.4]{TomczakJaegermann1989})
there exists an isomorphism
  \[
    u:V\to \ell_2^d
  \]
such that
  \[
    \|u\|\|u^{-1}\|\leq \sqrt d. 
  \]
By Equation \eqref{eq: alpha}, we know that
\[
\alpha(\ell_2^d)>\sqrt d,
\]
hence
\[
\frac{\|c\|_{M_{n}(\max(\ell_2^d))}}
     {\|c\|_{M_{n}(\min(\ell_2^d))}}
>
\|u\|\|u^{-1}\|, 
\]
for some $n$. 
Thus Lemma \ref{lem: radii linear isomorphism} applies with $\cE_2$ any operator space over $\ell_2^d$ (e.g. $\cE_2 = \max(\ell^2_d)$), providing an element $X\in M_{2n}(V)$ such that
\[
\rho_{\min(V)}(X)<\rho_{\max(V)}(X).
\]

\end{proof}

\begin{example}\label{ex:row_col_2}
We shall show that different non-selfadjoint operator space structures over $(\bC^2, \|\cdot\|_2)$, give, in general, different spectral radii on matrix tuples. 
For this, we consider the row norm $\|X\|_{\operatorname{row}}$, the column norm $\|X\|_{\operatorname{col}}$, and the norm 
\[
\|X\|_{\max(\operatorname{row},\operatorname{col})} = \max(\|X\|_{\operatorname{row}},\|X\|_{\operatorname{col}}). 
\]
Consider the irreducible $2$-tuple
\[
A_1 = \begin{pmatrix} 1 & 1 \\ 0 & 1 \end{pmatrix} \,\, , \,\, A_2 = \begin{pmatrix} 0 & 1 \\ 1 & 0 \end{pmatrix} , 
\]
and the corresponding induced CP map $\phi_A \colon M_2 \to M_2$, given by 
\[
\phi_A(X) = A_1 X A_1^* + A_2 X A_2^*. 
\]
By the Perron--Frobenius theory for CP maps developed by Evans and Hoegh-Krohn (see Section 2 in \cite{evans1978spectral}), there exists a unique (up to normalization) eigenvalue $H \in M_2$ such that 
\[
\phi_A(H) = rH , 
\]
with $r = \rho_{\operatorname{row}}(A)$ being the maximal eigenvalue of $\phi_A$. 
Moreover, this $H$ must be strictly positive definite. 
It is shown in \cite{evans1978spectral} that the positive $S = H^{1/2}$ is the unique minimizer of 
\[
\rho_{\operatorname{row}}(A) = \min_{K > 0} || (K^{-1}  A_1 K, K^{-1} A_2  K)||_{\operatorname{row}}.
\]
Similarly, the CP map $\Psi_A(X) = A_1^*X A_1 + A_2^* X A_2$ has maximal eigenvalue $r = \rho_{\operatorname{col}}(A) = \rho_{\operatorname{row}}(A)$ with a unique eigenvector $L$, which is positive, such that $T=L^{-1/2}$ is the unique (up to scalar) minimizer of
\[
\rho_{\operatorname{col}}(A) = \min_{K > 0} || (K^{-1}  A_1 K, K^{-1} A_2  K)||_{\operatorname{col}}.
\]
It is easy to compute the eigenvector $H = \left(\begin{smallmatrix} 2 & 1 \\ 1 & 1 \end{smallmatrix}\right)$ for $\Phi_A$ and the eigenvector $L = \left(\begin{smallmatrix} 1 & 1 \\ 1 & 2 \end{smallmatrix}\right)$ for $\Psi_A$ and one then sees that $H^{1/2} \propto \left(\begin{smallmatrix} 3 & 1 \\ 1 & 1 \end{smallmatrix}\right)$ is not co-linear with $L^{-1/2} \propto \left(\begin{smallmatrix} 3 & -1 \\ -1 & 1 \end{smallmatrix}\right)$. 
This shows that, although $r := \rho_{\operatorname{row}}(A) = \rho_{\operatorname{col}}(A)$, the unique positive operators $S = H^{1/2} , T = L^{-1/2}$ satisfying 
\[
\rho_{\operatorname{row}}(A) = || (S^{-1}  A_1 S, S^{-1} A_2  S)||_{\operatorname{row}} \quad, \quad \rho_{\operatorname{col}}(A) = || (T^{-1}  A_1 T, T^{-1} A_2  T)||_{\operatorname{col}}, 
\]
are different. 
Thus, there is no similarity that pushes $A$ to have minimal row and column norms simultaneously. This means that 
\be\label{eq:sep_maxrowcol}
\rho_{\max(\operatorname{row},\operatorname{col})}(A) > \rho_{\operatorname{row}}(A) = \rho_{\operatorname{col}}(A).
\ee
This provides a concrete example of two operator space structures $\cE_1 = \bC^2_{\operatorname{row}}$ and $\cE_2 = \bC^2_{\max(\operatorname{row},\operatorname{col})}$ quantizing the normed space $(\bC^2, \|\cdot\|_2)$ and a tuple $A \in M_n^2$ for which $\rho_{\cE_1}(A) \neq \rho_{\cE_2}(A)$, showing that the joint spectral radius depends on the operator space structure. 
\end{example}

The above example implies that $\rho_{\max(\ell^2_2)} \neq \rho_{\min(\ell^2_2)}$, which follows from Theorem \ref{thm:selfadj_ineq} and the Example right after it. 
However, it is worth noting that the separation \eqref{eq:sep_maxrowcol} does not follow from our general results. Moreover, it is worth pointing out that the uniqueness of the positive matrix that minimizes the norm on the similarity orbit is a property peculiar to the row and column balls. Take a point $(U,A) \in \overline{\fD^2}$, such that $U$ is a unitary and $A$ is a strict contraction. Then, there is a neighborhood of the identity in the cone of positive matrices, such that for each such $S$, $\|S A S^{-1}\| < 1$. Intersecting this neighborhood with the commutant of $U$, we get an infinite set of points that do not change the $\min(\ell^{\infty}_2)$ norm of the point. Moreover, it is clear because of the unitary coordinate, that the minimum of the norm on the similarity orbit is $1$.

\section{The minimal spectral radius}

In \cite{shalsham2025spectral} we defined the following alternative spectral radius function $\rho^{\min}_Q$.

\begin{definition}\label{def:min_spectral_rad}
For $Q_1, \ldots, Q_d \in B(\cH)^d$ and $X \in B(\cK)^d$, we define the {\em minimal $Q$-spectral radius} of $X$ to be
\begin{equation}\label{eq:spectral_radius_formula}
\rho^{\min}_Q(X) = \lim_{n \to \infty} \left \|\sum_{|w|=n} X^w \otimes Q_{w_1} \otimes Q_{w_2} \otimes \cdots \otimes Q_{w_n} \right\|^{1/n} . 
\end{equation}
If $\cE$ is an operator space structure on $\bC^d$ determined by $\|X\| = \sum_j X_j \otimes Q_j$ then we write $\rho^{\min}_\cE(X) = \rho^{\min}_Q(X)$
\end{definition}
The significant difference from Definition \eqref{eq:Hsr} is that here the tensor product is taken to be the minimal tensor product. 
To be precise, we take the minimal operator space tensor norm, instead of the Haagerup tensor norm, on the algebraic tensor product $\cE \otimes_{\operatorname{alg}} \cdots \otimes_{\operatorname{alg}} \cE$. 
Since the Haagerup tensor product dominates the minimal one, we always have $\rho_Q^{\min}(X) \leq \rho_Q(X)$. 

The interest in the minimal spectral radius stems from its naturality and simplicity, as well as the fact that it leads to spectral radii of interest in a couple of important cases. 

\begin{example}\label{ex:specrad_min_ellinf}
In Example 2.11 of \cite{shalsham2025spectral} it was shown that  if $Q_1, \ldots, Q_d$ are the canonical generators of the minimal operator space $\min(\ell^\infty_d)$, then $\rho^{\min}_Q(X)$ is equal to the so-called {\em Rota-Strang spectral radius}
\[
\rho^{\min}_Q(X) = \rho_{RS}(X) := \lim_{n \to \infty} \max_{|w|=n}\|W^w\|^{1/n}. 
\]
It was also noted in that same example that, when $d\geq2$, there are examples of $d$-tuples of $d \times d$ matrices $X$ such that $\rho_{RS}(X) \neq \rho_{\min(\ell^\infty_d)}(X)$.
Thus, in general, the minimal spectral radius function $\rho^{\min}_Q$ differs from the spectral radius function $\rho_Q(X)$. 
\end{example}

\begin{example}\label{ex:specrad_row_column}
On the other hand, in the case when $Q_1, \ldots, Q_d$ are the canonical generators of the row operator space $\left(\ell^2_d \right)_{\operatorname{row}}$, then $\rho^{\min}_Q(X) = \rho_Q(X)$,  (see \cite[Examples 2.10]{shalsham2025spectral}); the same holds for the column operator space. 
This follows from the defining formulas, together with the natural identifications (see \cite[p. 95]{Pisier-book})
\[
\left(\ell^2_m \right)_{\operatorname{row}} \otimes_h \left(\ell^2_m \right)_{\operatorname{row}} = \left(\ell^2_{mn} \right)_{\operatorname{row}} = \left(\ell^2_m \right)_{\operatorname{row}} \otimes_{\min}  \left(\ell^2_m \right)_{\operatorname{row}}
\]
and 
\[
\left(\ell^2_m \right)_{\operatorname{col}} \otimes_h \left(\ell^2_m \right)_{\operatorname{col}} = \left(\ell^2_{mn} \right)_{\operatorname{col}} = \left(\ell^2_m \right)_{\operatorname{col}} \otimes_{\min}  \left(\ell^2_m \right)_{\operatorname{col}}
\]
between the Haagerup and minimal tensor products for the row and column operator spaces. 
\end{example}

Thus, the minimal spectral radius may in special cases be equal to the spectral radius associated to an operator space. 
We do not know of other examples (besides the column and row spaces) in which this happens. 
However, for commuting tuples all the above mentioned spectral radii coincide. 
We now prove this result and obtain the formula \eqref{eq:spec_rad_comm}
which justifies the name ``spectral radius".

\begin{theorem} \label{thm:min_specrad}
    Let $X = (X_1,\ldots,X_d) \in B(\cK)^d$ be a commuting tuple, let $V$ be $\bC^d$ equipped with a norm $\|\cdot\|_V$, and let $\cE$ be an operator space structure on $\C^d$ quantizing $V$ determined by operators $Q_1, \ldots, Q_d \in B(\cH)$. Then, 
    \begin{equation}\label{eq:spec_rad_comm}
    \rho_Q^{\min}(X) = \rho_Q(X) = \max\{\|\lambda\|_V : \lambda \in \sigma(X) \}.
    \end{equation}
\end{theorem}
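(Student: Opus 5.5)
Set $r(X) := \max\{\|\lambda\|_V : \lambda \in \sigma(X)\}$. The equality $\rho_Q(X) = r(X)$ follows from Theorem \ref{thm:comm_specrad} by homogeneity. Both $\rho_Q$ and $\sigma$ scale linearly: $\rho_Q(tX) = |t|\rho_Q(X)$ and $\sigma(tX) = t\sigma(X)$. Theorem \ref{thm:comm_specrad} applied to $tX$ gives $\rho_Q(tX)<1 \iff \sigma(tX)\subset B_V$, i.e.\ $\rho_Q(X)<1/t \iff r(X)<1/t$ for every $t>0$, and hence $\rho_Q(X)=r(X)$. Since $\sigma(X)$ is compact, the maximum is attained. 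The inequality $\rho_Q^{\min}(X)\le \rho_Q(X)$ was noted after Definition \ref{def:min_spectral_rad}, because the Haagerup norm dominates the minimal one. It therefore remains to prove $\rho^{\min}_Q(X) \ge r(X)$.

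For the lower bound, fix $\lambda\in\sigma(X)$ and a character $\psi$ of $\cA_X$ with $\psi(X_j)=\lambda_j$. The plan is to bound the $n$-th term of \eqref{eq:spectral_radius_formula} from below by $\|\lambda\|_V^n$. The character $\psi$ is a contractive linear functional on the Banach algebra $\cA_X$, and the $n$-th term lies in $\cA_X\otimes_{\min}(\cE\otimes_{\min}\cdots\otimes_{\min}\cE)$. Applying $\psi\otimes\id$, which is contractive in the minimal tensor norm since $\psi$ is a contractive functional on an operator space, we get
\[
\Big\|\sum_{|w|=n} X^w\otimes Q_{w_1}\otimes\cdots\otimes Q_{w_n}\Big\| \ge \Big\|\sum_{|w|=n}\lambda^w Q_{w_1}\otimes\cdots\otimes Q_{w_n}\Big\| = \Big\|\Big(\sum_j\lambda_jQ_j\Big)^{\otimes n}\Big\|.
\]
The right-hand side uses that $\lambda^w = \lambda_{w_1}\cdots\lambda_{w_n}$, because scalars commute, so the sum factors as an $n$-fold tensor power. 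The minimal (spatial) tensor norm is a cross norm, so $\|a^{\otimes n}\| = \|a\|^n$, and $\|\sum_j\lambda_jQ_j\| = \|\lambda\|_V$ because $\cE$ quantizes $V$. Taking $n$-th roots and letting $n\to\infty$ gives $\rho_Q^{\min}(X)\ge \|\lambda\|_V$. Maximizing over $\lambda\in\sigma(X)$ yields $\rho^{\min}_Q(X)\ge r(X)$.

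The only delicate point is the contractivity of $\psi\otimes \id$ on the minimal tensor product. It holds because characters of a Banach algebra have norm at most one (here $\cA_X$ is a unital operator algebra), and for any bounded functional $f$ on an operator space $A$ one has $\|(f\otimes\id)(u)\|\le\|f\|\,\|u\|_{A\otimes_{\min}B}$. Here $A=\cA_X \subset B(\cK)$ carries its concrete operator space structure, and the norm on the left of the display is exactly the norm in $B(\cK)\otimes_{\min}(\cE^{\otimes_{\min} n})$. I expect no further obstacles: the hard analytic work, namely the upper bound via the completely bounded Taylor functional calculus, is already contained in Theorem \ref{thm:comm_specrad}.
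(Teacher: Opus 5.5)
Your proof is correct, and your lower bound follows a genuinely different route from the paper's.

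The paper uses the chain $\rho^{\min}_{\min(V)}(X)\le\rho^{\min}_Q(X)\le\rho_Q(X)$ to reduce the problem to computing $\rho^{\min}_{\min(V)}(X)$ exactly. It represents $\min(V)$ inside $C(B)$, where $B$ is the closed unit ball of $V^*$. This turns the $n$-th term into a supremum over $x\in B^n$ of norms of products of elements of the compact commuting set $\cN=\{\sum_j x_jX_j : x\in B\}$. It then invokes a Berger--Wang type formula, the spectral mapping theorem, and duality.

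You instead bound the $n$-th term from below, for arbitrary $Q$, by pushing it through $\psi\otimes\id$ for a character $\psi$ of $\cA_X$. This needs only three standard facts: characters are contractive; bounded functionals are automatically completely bounded with the same norm, so $\psi\otimes\id$ is contractive on the minimal tensor product (after injectivity of $\otimes_{\min}$ lets you replace $B(\cK)$ by $\cA_X$); and the spatial norm is a cross norm.

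Your argument is shorter. It avoids both the reduction to $\min(V)$ and the Berger--Wang machinery. Since the Haagerup norm dominates the minimal one, it also gives an elementary proof of $\rho_Q(X)\ge\max\{\|\lambda\|_V:\lambda\in\sigma(X)\}$, i.e.\ of Lemma \ref{lem:specrad_implies_spectrum}, without the functional calculus.

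What the paper's computation buys is an independent identification $\rho^{\min}_{\min(V)}(X)=\max\{\|\lambda\|_V:\lambda\in\sigma(X)\}$, with both inequalities and no use of the Taylor functional calculus. This ties the result to the Rota--Strang and M\"uller formulas discussed after the theorem.

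In both proofs, the upper bound $\rho_Q(X)\le\max\{\|\lambda\|_V:\lambda\in\sigma(X)\}$ rests on Theorem \ref{thm:comm_specrad}. The paper leaves implicit the homogeneity step that converts that theorem's strict-inequality criterion into an equality; you handle it correctly.
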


\begin{proof}
The second equality follows from Theorem \ref{thm:comm_specrad}. 
One always has $\rho_Q^{\min}(X) \leq \rho_Q(X)$, and since $\rho_{\min(V)}^{\min}(X) \leq \rho_{Q}^{\min}(X)$, it remains to show that 
\[
\rho_{\min(V)}^{\min}(X) = \max\{\|\lambda\|_V : \lambda \in \sigma(X) \}.
\]
Let $B:= \ol{B_{V^*}} \subset \bC^d$ denote the closed unit ball of the dual $V^*$. 
Represent the operator space structure $\min(V)$ on $\bC^d$ by mapping the standard basis vector $e_i \in \bC^d$ to the coordinate function $z_i$ in $C(B)$. 
The minimal tensor product $\min(V)^{\otimes_{\min}}$ is then naturally identified as the subspace of $C(B^n)$ spanned by the functions 
\[
z_{i_1} \otimes \cdots \otimes z_{i_n} \colon (x_1, \ldots, x_n) \mapsto z_{i_1}(x_1) \cdots z_{i_n}(x_n). 
\]

Referring to Equation \eqref{eq:spectral_radius_formula}, we compute 
\begin{align*}
\rho^{\min}_{\min(V)}(X) &= \lim_{n \to \infty} \left \|\sum_{|w|=n} X^w \otimes z_{w_1} \otimes \cdots \otimes z_{w_n} \right\|^{1/n} \\
&= \lim_{n \to \infty} \sup_{x \in B^n}\left \|\sum_{|w|=n}  z_{w_1}(x_1)  \cdots z_{w_n}(x_n) X^w \right\|^{1/n} \\
&= \lim_{n \to \infty} \sup_{x \in B^n} \left\|\left( \sum_{j=1}^d  z_{j}(x_1) X_j\right)  \cdots \left( \sum_{j=1}^d  z_{j}(x_n) X_j\right) \right\|^{1/n}. 
\end{align*}
Now let use consider the following set of bounded operators
\[
\cN = \left\{ \sum_{j=1}^d  z_{j}(x) X_j : x \in B\right\}, 
\]
and write $\cN^n$ for the set of all products of $n$ elements from $\cN$. 
Noting that $\cN$ is norm compact and using commutativity, it is not hard to show (and well known; see \cite{soltysiak1993joint} or \cite[Lemma 2.8]{shulman2002formulae}) that one has the following Berger-Wang type formula
\begin{equation}\label{eq:BW}
\lim_{n \to \infty} \sup_{A \in \cN^n}\|A\|^{1/n} = \lim_{n \to \infty} \sup_{A \in \cN^n}\rho(A)^{1/n} = \sup_{A \in \cN}\rho(A). 
\end{equation}
Plugging this in the above formula for $\rho^{\min}_{\min(V)}(X)$, and using the spectral mapping theorem and duality, we find 
\begin{align*}
\rho^{\min}_{\min(V)}(X) &= \sup_{x \in B}\rho\left( \sum_{j=1}^d  z_{j}(x) X_j \right) \\
&= \sup \left\{ \left|\sum_{j=1}^d x_j \lambda_j \right|  : x \in B, \lambda \in \sigma(X) \right\} \\
&= \max\left\{\|\lambda\|_V : \lambda \in \sigma(X) \right\}. 
\end{align*}
That completes the proof. 
\end{proof}

\begin{remark}
By Theorem \ref{thm:comm_specrad}, we can replace $\sigma(X)$ in the right hand side of \eqref{eq:spec_rad_comm} with $\sigma_T(X)$. 
This recovers in a somewhat roundabout and unexpected way the well-known fact that $\max\{\|\lambda\|_V : \lambda \in \sigma(X) \} = \max\{\|\lambda\|_V : \lambda \in \sigma_T(X) \}$; in fact any reasonable notion of spectrum will give the same quantity (see \cite[Corollary 10]{cho1992geometric}). 
\end{remark}

Theorem \ref{thm:min_specrad} gives a formula for the maximal norm of a point in the joint spectrum of a commuting tuple of operators $X$:
\begin{equation}\label{eq:srf}
\max\{\|\lambda\|_V : \lambda \in \sigma(X) \} = \lim_{n \to \infty} \left \|\sum_{|w|=n} X^w \otimes Q_{w_1} \otimes Q_{w_2} \otimes \cdots \otimes Q_{w_n} \right\|^{1/n}
\end{equation}
where $Q_1, \ldots, Q_d \subset B(\cH)$ are any operators such that $\sum_j x_j e_j \mapsto \sum_j x_j Q_j$ is an isometry from $V = \left(\bC^d, \|\cdot\|_V \right)$ into $B(\cH)$. 

We now compare this formula with know formulas for the $p$-norms $\|x\|_p = \left(\sum_j |x_j|^p\right)^{1/p}$ on $\bC^d$. 
In the proof above, we noted that the Berger-Wang type formula  \eqref{eq:BW}, which was noted elsewhere in the literature (\cite{shulman2002formulae,soltysiak1993joint}). 
For $\cN = \{X_1, \ldots, X_d\}$, this can be rewritten as 
\begin{equation}\label{eq:linf_srf}
\max_{\lambda \in \sigma(X)} \|\lambda\|_\infty = \lim_{n\to \infty} \max_{|w|=n}\|X^w\|^{1/n} .
\end{equation}
The right hand side of the above equation is readily seen to be equal to the right hand side of \eqref{eq:srf} for $Q_j = E_{jj}$ --- the diagonal matrix in $M_d(\bC)$ with $1$ in the $j$th diagonal slot and $0$s elsewhere, which clearly generate an isometric copy of $\ell^\infty_d$. 
As noted in \cite[Example 2.11]{shalsham2025spectral}, the quantity $\lim_{n\to \infty} \max_{|w|=n}\|X^w\|^{1/n}$ is the joint spectral radius $\rho_{RS}(X)$ introduced by Rota and Strang \cite{RS60}, and when with the above choice of $Q_1, \ldots, Q_d$ the minimal spectral radius $\rho_Q^{\min }(X)$ readily collapses to the Rota--Strang spectral radius $\rho_{RS}(X)$. 

In \cite{muller1997joint}, M\"{uller} obtained the following elegant formula for every commuting tuple of Banach space elements $X = (X_1, \ldots, X_d)$ and all $p \in [1, \infty)$: 
\begin{equation}\label{eq:srf_muller}
\max_{\lambda \in \sigma(X)}\|\lambda\|_p  = \lim_{n \to \infty} \left(\sum_{|w|=n} \|X^w\|^p \right)^{\frac{1}{np}}
= \lim_{n \to \infty} \left(\sum_{|\alpha|=n} \binom{n}{\alpha}\|X^\alpha\|^p \right)^{\frac{1}{np}}
\end{equation}
where in the middle expression $w \in \{1, \ldots, d\}$ is a word, and in the last expression $\alpha \in \bN^d$ is a multi-index and we are using multi-index notation. 
We obtain a curious identity between the right hand sides of \eqref{eq:srf} and \eqref{eq:srf_muller} in this setting.

\section{Opposite operator spaces} 

Recall that if $\cE$ is an operator space, then $\cE^{\op}$ is the operator space with matrix norms given, for $X = (x_{ij}) \in M_n(\cE)$, by 
\[
\|X\|_{\cE^{\op}} := \|X^T\|_{\cE}, 
\]
where $X^T$ denotes the transposed matrix $\left((x_{ij})_{i,j=1}^n \right)^T = (x_{ji})_{i,j=1}^n$. 
If $\cE$ is an operator space structure on $\bC^d$, then the operator space structure $\cE^{\op}$ is given concretely on matrix tuples $X = (X_1, \ldots, X_d) \in \bM^d$ by $\|X\|_{\cE^{\op}} = \|X^T\|_\cE$, where $X^T = (X_1^T, \ldots, X_d^T)$. 

The operation of transposition extends in a basis free way to operators on infinite dimensional Hilbert space. 
If $A \colon \cH \to \cK$ then $A^T \colon \cK^* \to \cH^*$ is nothing but the adjoint map between the dual spaces. 
If $A$ is represented as an infinite matrix, then $A^T$ can be represented concretely by the transposed matrix. 

Suppose that the operator space structure on $\cE$ is determined by $Q_1, \ldots, Q_d$. 
In that case the opposite $\cE^{\op}$ is determined by the tuple of operators $Q_1^T, \ldots, Q_d^T$, because 
\[
\left\| \sum_{j=1}^d X_j^T \otimes Q_j \right\| = \left\| \sum_{j=1}^d X_j \otimes Q_j^T \right\|.
\]

For the following theorem we shall need to recall the way in which the Haagerup tensor product interacts with the opposite operation. 
The Haagerup tensor norm is defined as follows: if $X \in M_n(E \otimes_{\alg} F)$, its Haagerup norm is given by 
\[
\|X\|_{M_n(E\otimes_h F)} = \min\{\|Y\|\|Z\| :  X = Y \odot Z \},
\]
where the minimum is over all $Y \in M_{n,m}(E)$ and $Z \in M_{m,n}(F)$ and the product $Y \odot Z$ is defined by $(Y \odot Z)_{ij} = \sum_k Y_{ik} \otimes Z_{kj}$. 
Now, if $X = Y \odot Z$ as above, then 
\[
(X^T)_{ij} = X_{ji} = \sum_k Y_{jk} \otimes Z_{ki}. 
\]
Now if $\cF \colon F \otimes_{\alg} E \to E \otimes_{\alg} F$ is the flip $y \otimes x \mapsto x \otimes y$, we see that 
\[
(X^T)_{ij} = \cF \left( (Z^T \odot Y^T)_{ij} \right). 
\]
Writing $\cF$ also for the flip $E \otimes_{\alg} F \to F \otimes_{\alg} E$, we conclude that 
\be\label{eq:XYZ}
X = Y \odot Z \textrm{ if and only if } \cF(X^T) = Z^T \odot Y^T .
\ee
Now,
\be\label{eq:norm1}
\|X\|_{M_n((E \otimes_h F)^{\op})} = \|X^T\|_{M_n(E \otimes_h F)} = \min\left\{\|Y\|_E \|Z\|_F : X^T = Y \odot Z \right\}, 
\ee
while, on the other hand, 
\begin{equation}
\begin{aligned}[t]
\|\cF(X)\|_{M_n(F^{\op} \otimes_h E^{\op})} 
&= \min\left\{ \|Z\|_{F^{\op}} \|Y\|_{E^{\op}} : \cF(X) = Z \odot Y \right\} \\
&= \min\left\{ \|Z^T\|_F \|Y^T\|_E : \cF(X) = Z \odot Y \right\}. 
\end{aligned}
\label{eq:norm2}
\end{equation}
Combining \eqref{eq:XYZ}, \eqref{eq:norm1} and \eqref{eq:norm2}, and noting that 
\[
\min\left\{\|Y\|_E \|Z\|_F : X^T = Y \odot Z \right\} = \min\left\{\|Y^T\|_E \|Z^T\|_F : X^T = Y^T \odot Z^T \right\},
\]we see that the flip induces a completely isometric isomorphism 
\[
(E \otimes_h F)^{\op} \cong F^{\op} \otimes_h E^{\op}. 
\]
Similarly, one can show that the flip induces a completely isometric isomorphism
\[
(E_1 \otimes_h E_2 \otimes_h \cdots \otimes_h E_n)^{\op} \cong E_n^{\op} \otimes_h \cdots \otimes_h E_2^{\op} \otimes_h E_1^{\op}. 
\]

\begin{theorem}\label{thm:op}
    Let $\cE$ be an operator space structure on $\bC^d$ and let $\cE^{\op}$ be the opposite operator space structure. 
    Then 
    \[
    \rho_{\cE^{\op}}(X) = \rho_{\cE}(X^T), 
    \]
    for all $X \in B(\cK)^d$.
\end{theorem}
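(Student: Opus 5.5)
We prove the identity via the Haagerup-tensor formula \eqref{eq:Hsr} and the flip isomorphism established just before the statement. For $X \in B(\cK)^d$ and $n \in \bN$, write
\[
T_n(X) := \sum_{|w|=n} X^w \otimes Q_{w_1} \otimes_h \cdots \otimes_h Q_{w_n} \in B(\cK) \otimes_{\min} (\cE^{\otimes_h n}).
\]
By \eqref{eq:Hsr}, $\rho_\cE(X) = \lim_n \|T_n(X)\|^{1/n}$. For $\cE^{\op}$ we use the generators $Q_j^T$, so
\[
\rho_{\cE^{\op}}(X) = \lim_n \Bigl\|\sum_{|w|=n} X^w \otimes Q^T_{w_1}\otimes_h \cdots \otimes_h Q^T_{w_n}\Bigr\|^{1/n},
\]
computed in $B(\cK)\otimes_{\min} (\cE^{\op})^{\otimes_h n}$. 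The task is to identify this norm with $\|T_n(X^T)\|$.

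First, transposition on the $B(\cK)$ factor converts $\cE$-norms into $\cE^{\op}$-norms: for any operator space $F$ and $Y \in B(\cK)\otimes_{\min} F$, we have $\|Y\|_{B(\cK)\otimes_{\min} F^{\op}} = \|Y^T\|_{B(\cK)\otimes_{\min} F}$, where $T$ acts on $B(\cK)$ via $B(\cK) \to B(\cK^*)$. At matrix levels this is exactly the definition of $F^{\op}$. At the operator level, approximate by compressions $P_\cM Y|_\cM$ to finite-dimensional $\cM$; the minimal tensor norm is the supremum over these compressions, and transposition commutes with compression. Applying this with $F = (\cE^{\op})^{\otimes_h n}$ gives
\[
\Bigl\|\sum_w X^w \otimes Q^T_{w_1}\otimes_h\cdots\otimes_h Q^T_{w_n}\Bigr\| = \Bigl\|\sum_w (X^w)^T \otimes Q^T_{w_1}\otimes_h\cdots\otimes_h Q^T_{w_n}\Bigr\|_{B(\cK^*)\otimes_{\min}((\cE^{\op})^{\otimes_h n})^{\op}}.
\]

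Next, apply the flip isomorphism $((\cE^{\op})^{\otimes_h n})^{\op} \cong (\cE^{\op\op})^{\otimes_h n}$ with reversed order. Since $\cE^{\op\op} = \cE$ and $Q^{TT} = Q$, the element $Q^T_{w_1}\otimes_h\cdots\otimes_h Q^T_{w_n}$ is sent to $Q_{w_n}\otimes_h\cdots\otimes_h Q_{w_1}$. Both the flip and transposition should be checked against the concrete generators here: under the abstract identification, the generator $e_j$ of $\cE^{\op}$ is represented by $Q_j^T$. This is the step requiring the most care. Because the flip is a complete isometry, it tensors with $\id_{B(\cK^*)}$ isometrically on the minimal tensor product. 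The norm therefore equals
\[
\Bigl\|\sum_w (X^w)^T \otimes Q_{w_n}\otimes_h\cdots\otimes_h Q_{w_1}\Bigr\|.
\]

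Finally, $(X^w)^T = (X_{w_1}\cdots X_{w_n})^T = X_{w_n}^T \cdots X_{w_1}^T = (X^T)^{\bar w}$, where $\bar w$ is the reversed word. Reindexing the sum by $\bar w$ turns it into exactly $T_n(X^T)$, now taken in $B(\cK^*)$ with $\cK^* \cong \cK$ antiunitarily, which does not affect norms. Taking $n$th roots and limits yields $\rho_{\cE^{\op}}(X) = \rho_\cE(X^T)$. The main obstacle is making the transposition step rigorous at the operator level, namely the compatibility of $T$ with $\otimes_{\min}$ for infinite-dimensional $\cK$. The finite-dimensional compression argument above handles this, since the minimal norm is attained as a supremum over finite-rank compressions.
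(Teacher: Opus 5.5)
Your proposal is correct and is essentially the paper's proof. The paper also expresses $\rho_{\cE^{\op}}(X)$ through \eqref{eq:Hsr} with generators $Q_j^T$, moves the transpose onto the $B(\cK)$ factor while applying the flip isomorphism $((\cE^{\op})^{\otimes_h n})^{\op}\cong\cE^{\otimes_h n}$ in reversed order, and reindexes using $(X^w)^T=X^T_{w_n}\cdots X^T_{w_1}$; your compression argument just makes the operator-level transposition step explicit.

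One small slip: the identification $\cK^*\cong\cK$ should be a (basis-dependent) unitary, not the antiunitary Riesz map. The Riesz map turns $A^T$ into $A^*$ and is conjugate-linear on $B(\cK)$, so it cannot be tensored with the identity on $\cE^{\otimes_h n}$. In any case you can simply compute $\rho_\cE(X^T)$ with $X^T$ acting on $\cK^*$, which is how the paper reads $X^T$.
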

\begin{proof}
We suppose that the operator space structure on $\cE$ is determined by $Q_1, \ldots, Q_d$ and that the opposite $\cE^{\op}$ is determined by the tuple of operators $Q_1^T, \ldots, Q_d^T$.
\begin{align*}
\rho_{\cE^{\op}}(X) &= \lim_{n \to \infty} \left \|\sum_{|w|=n} X^w \otimes Q^T_{w_1} \otimes_h Q^T_{w_2} \otimes_h \cdots \otimes_h Q^T_{w_n} \right\|^{1/n} \\
&= \lim_{n \to \infty} \left \|\sum_{|w|=n} (X^w)^T \otimes Q_{w_n} \otimes_h \cdots \otimes_h  Q_{w_2} \otimes_h Q_{w_1} \right\|^{1/n} \\
&= \lim_{n \to \infty} \left \|\sum_{|w|=n} X_{w_n}^T \cdots X_{w_1}^T \otimes Q_{w_n} \otimes_h \cdots \otimes_h  Q_{w_2} \otimes_h Q_{w_1} \right\|^{1/n} \\
&= \rho_\cE(X^T), 
\end{align*}
as claimed. 
\end{proof}

\begin{example}[The row and column operator spaces]\label{ex:row_col_equal}
Let $\cE = \left(\ell^2_d\right)_{\operatorname{row}}$ be the row operator space structure on $\bC^d$ generated by the standard basis elements $Q_1 = E_{11}, Q_2 = E_{12}, \ldots , Q_d = E_{1d}$. Then $\cE^{\op}$ is the column operator space structure on $\bC^d$ with basis $Q_1^T = E_{11}$, $Q_2^T = E_{21}$, $\ldots$, $Q_d^T = E_{d1}$. 
In Example \ref{ex:specrad_row_column}, we noted that the spectral radius in these cases is equal to the minimal spectral radius. 
Thus
\begin{align*}
\rho_{\operatorname{row}}(X) := \rho_\cE(X) &= \lim_{n \to \infty} \left\|\sum_{|w|=n} X^w \otimes E_{11} \otimes E_{12} \otimes \cdots \otimes E_{1d} 
\right\|^{1/n} \\
&= \lim_{n \to \infty} \left\|\operatorname{row} \left( X^w \right)_{|w|=n} \right\|^{1/n} \\
&= \lim_{n\to \infty} \left\| \sum_{|w|=n} X^w X^{w*} \right\|^{1/2n} 
\end{align*}
and
\begin{align*}
\rho_{\operatorname{col}}(X) := \rho_{\cE^{\op}}(X) &= \lim_{n \to \infty} \left\|\sum_{|w|=n} X^w \otimes E_{11} \otimes E_{21} \otimes \cdots \otimes E_{d1} 
\right\|^{1/n} \\
&= \lim_{n \to \infty} \left\|\operatorname{col} \left( X^w \right)_{|w|=n} \right\|^{1/n} \\
&= \lim_{n\to \infty} \left\| \sum_{|w|=n} X^{w*} X^w \right\|^{1/2n} .
\end{align*}
Plugging $X^T$ instead of $X$ in the above formula for $\rho_{\operatorname{row}}$ (and using the invariance of the operator norm under complex conjugation), we verify that $\rho_{\cE^{\op}}(X) = \rho_{\cE}(X^T)$ holds as we know it should by Theorem \ref{thm:op}. 

For the row and column operator spaces there is an additional identity 
\be\label{eq:rho_row_rho_col}
\rho_{\operatorname{row}}(X) = \rho_{\operatorname{col}}(X) \,\, , \,\, \textrm{ for all } X \in \bM^d. 
\ee
This was proved in \cite[Lemma 2.1]{JMS-clark}, but we provide a short proof for completeness. 
Recall that for a $k \times k$ positive matrix $P$ we have the simple relations $\|P\| \leq \Tr(P) \leq k \|P\|$. 
Then for every $X \in M_k^d$ and every $n$, 
    \begin{align*}
    \left\| \sum_{|w|=n} X^{w*} X^w \right\|
    & \leq \Tr\left( \sum_{|w|=n} X^{w*}X^w \right) \\
    &= \Tr\left( \sum_{|w|=n} X^wX^{w*} \right) \\
    & \leq k \left\|\sum_{|w|=n} X^wX^{w*} \right\|. 
    \end{align*}
    Taking $2n$-th roots followed by a limit we find that $\rho_{\operatorname{col}}(X) \leq \rho_{\operatorname{row}}(X)$. The reverse inequality is shown in the same way, yielding \eqref{eq:rho_row_rho_col}. 
    However, the row and column spectral radii do not coincide, as can be seen by considering a tuple $V = (V_1, \ldots, V_d)$ of isometries with pairwise orthogonal ranges (that is, a row isometry). 
    An easy calculation shows that
    \be\label{eq:VVstar}
    \rho_{\operatorname{row}}(V) = \lim_{n\to \infty} \left\| \sum_{|w|=n} V^w V^{w*} \right\|^{1/2n} = 1, 
    \ee
    while 
    \be\label{eq:VstarV}
    \rho_{\operatorname{col}}(V) = \lim_{n\to \infty} \left\| \sum_{|w|=n} V^{w*} V^{w} \right\|^{1/2n} = 
    \lim_{n\to \infty} (d^n)^{1/2n}= \sqrt{d}. 
    \ee
    Thus $\rho_{\operatorname{row}}(V) \neq \rho_{\operatorname{col}}(V)$. 
    This example also shows, significantly, that the spectral radii associated with two operator spaces might agree on all matrix tuples yet still give different values on operator tuples. 
\end{example}

\begin{remark}
There is another way to show \eqref{eq:rho_row_rho_col} which uses the characterization of the row and column spectral radii as the spectral radius of a CP map (this point of view was used in \cite{JMS-clark,Pascoe21,SSS2} and elsewhere). 
If $X \in M_n^d$ we define $\Phi_X, \Psi_X \colon M_n \mapsto M_n$ by $\Phi_X(T) = \sum X_j T X_j^*$ and $\Psi_X = \sum X_j^* T X_j$. 
Now, since the norm of a CP map is attained on the identity, we see that 
\[
\rho_{\operatorname{row}}(X)= \lim_{n\to \infty} \left\| \sum_{|w|=n} X^w X^{w*} \right\|^{1/2n} = \lim_{n \to \infty} \|\Phi_X^n\|^{1/2n} = \rho(\Phi_X)^{1/2}
\]
and likewise $\rho_{\operatorname{col}}(X) = \rho(\Psi_X)^{1/2}$. 
Using the fact that $\Phi_X$ considered as map on $M_n$ with the operator norm, is the Banach space adjoint map $\Psi_X^*$ of $\Psi_X$ considered as a map on $M_n$ with the trace norm, together with the equivalence of all norms on $M_n$, we see that $\rho(\Phi_X) = \rho(\Psi_X^*) =\rho(\Psi_X)$ and therefore $\rho_{\operatorname{row}}(X) = \rho_{\operatorname{col}}(X)$. 
This argument breaks down if the tuple $X$ acts on an infinite dimensional space (because the trace norm and the operator norm are not equivalent) and, indeed, the above example shows that the column and row spectral radii need not coincide in that case. 
\end{remark}

\section{Spectral radii and domains of operator realizations}\label{sec:rad_pencil}

A {\em realization} is a triple $(A,b,c)$ consisting of a tuple $A = (A_1, \ldots, A_d) \in B(\cH)^d$, and vectors $b,c \in \cH$. 
The terminology comes from the fact that every realization gives rise to a NC function $f = f_{(A,b,c)}$ defined in a neighborhood of $0 \in \bM^d$ given for $X \in M_n^d$ sufficiently close to $0$ by the formula
\be\label{eq:f_realization}
f(X) = (I_n \otimes b)^* \left(I_n \otimes I_\cH - \sum_{j=1}^d X_j \otimes A_j \right)^{-1} (I_n \otimes c), 
\ee
and, conversely, for every uniformly NC function $f$ defined in a neighborhood of $0 \in \bM^d$ there exists a realization $(A,b,c)$ such that $f = f_{(A,b,c)}$; see \cite{augat2025operator} and the references therein. 
The space $\cH$ can be chosen to be finite dimensional if and only if $f$ is a NC rational function. 

The terminology {\em operator realization} is used to emphasize that one might be considering infinite dimensional $\cH$ in the realization. 
Unlike in the well-studied case of NC rational functions with finite dimensional realization, in the general case of operator realizations there does not exists a unique-up-to similarity minimal realization; rather, there is a notion of minimal realization but any two minimal realization are related by the rather weak notion of {\em pseudo-similarity}; see \cite[Theorem 3.6]{augat2025operator}. 

Given an operator realization $(A,b,c)$, we define the {\em domain} of the realization to be the NC domain.
\[
\dom(A,b,c) = \left\{X \in \bM^d : I - \sum_j X_j \otimes A_j \textrm{ is invertible } \right\}; 
\]
in other words, $\dom(A,b,c)$ is the domain of definition of the NC function $z \mapsto b^*(1 - z A)^{-1}c$.
We also define 
\[
\dom^{(\infty)}(A,b,c) = \dom(A,b,c) \sqcup \left\{X \in B(\cK)^d : I - \sum_j X_j \otimes A_j \textrm{ is invertible } \right\}, 
\]
where $\cK$ is some fixed infinite dimensional separable Hilbert space. 
The notation stresses that the above notions of domain are attached to the realization rather than the NC function that it gives rise to. We also define the $B(\cH)$-valued pencil 
\[
L_A(X) = I - \sum_j X_j \otimes A_j , 
\]
and we define
\[
\dom(L_A^{-1}) = \dom(A,b,c) \quad \textrm{ and } \dom^{(\infty)}(L_A^{-1}) = \dom^{(\infty)}(A,b,c).
\]

In \cite[Theorem 3.4]{shalsham2025spectral}, it was shown that if $\cE$ is an operator space structure on $\bC^d$ and $E = \cE^*$ is the operator space dual, and if $(A,b,c)$ is a (minimal) realization for a NC rational function $f$, then $\rho_E(A)<1$ if and only if there exists $r>1$ such that $r\bB_\cE$ is contained in $\dom(A,b,c)$. 
It was also shown that these conditions are also equivalent to the statment that $f$ belongs to the algebra $A(R\bB_\cE)$ for all $R<1$. 
Our next goal is to prove a counterpart of \cite[Theorem 3.4]{shalsham2025spectral} for operator realizations.

\begin{theorem}\label{thm:rVSdom}
    Given an operator space structure $\cE$ on $\bC^d$, let $E = \cE^*$ denote the dual operator space structure on $\bC^d$. 
    For $A \in B(\cH)^d$ and $r > 0$, the following are equivalent:
    \begin{enumerate}
        \item $\rho_E(A) < r^{-1}$. \label{it:rho_lt_r}
        \item $R\bB_\cE^{(\infty)}\subset \dom^{(\infty)}(L^{-1}_A)$ for some $R > r$. \label{it:RBEinf}
        \item The similarity envelope of $R\bB^{(\infty)}_\cE$ is contained in $\dom^{(\infty)}(L^{-1}_A)$ for some $R > r$.\label{it:RBEinf_sim}
        \item For some $R > r$ the pencil $L_A$ is invertible in $A(R\bB_\cE) \otimes B(\cH)$. \label{it:inABE}
    \end{enumerate}
    In particular, for all $X \in B(\cK)^d$, if $\rho_\cE(X) \rho_E(A) < 1$ then $X \in \dom(L_A^{-1})$.
\end{theorem}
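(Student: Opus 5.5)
The plan is to run the cycle (1)$\Rightarrow$(4)$\Rightarrow$(3)$\Rightarrow$(2)$\Rightarrow$(1). Scaling reduces everything to $r=1$. Indeed, $\rho_E(sA)=s\rho_E(A)$, and $X\in R\bB_\cE$ if and only if $R^{-1}X\in\bB_\cE$, while $L_A(X)=L_{RA}(R^{-1}X)$. So we may assume $r=1$ throughout.

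The key identification comes from Lemma \ref{lem:specrad_homo}. The coordinate functions $Z_1,\dots,Z_d\in A(\bB_\cE)$ generate a completely isometric copy of $E$. Hence the element $\sum_j Z_j\otimes A_j\in A(\bB_\cE)\otimes_{\min}B(\cH)$ has norm equal to $\|A\|_{M(E)}$, and its spectral radius (computed in this Banach algebra) should equal $\rho_E(A)$.

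\emph{(1)$\Rightarrow$(4).} By Theorem \ref{thm:SS25} applied to $E$, choose $R>1$ with $\rho_E(RA)<1$. Then there is an invertible $S$ with $\|S^{-1}(RA)S\|_{E}<1$. Thus $\|\sum_j Z_j\otimes S^{-1}RA_jS\|<1$ in $A(\bB_\cE)\otimes_{\min}B(\cH)$, and the Neumann series $\sum_n(\sum_j Z_j\otimes S^{-1}RA_jS)^n$ converges there. Conjugating back by $I\otimes S$ gives an inverse of $L_{RA}(Z)$. Under the identification $Z\mapsto Z/R$, this is exactly the invertibility of $L_A$ in $A(R\bB_\cE)\otimes B(\cH)$.

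\emph{(4)$\Rightarrow$(3).} Let $G$ be the inverse. Each point $X$ of $R\bB^{(\infty)}_\cE$, including operator points, gives a completely contractive evaluation representation of $A(R\bB_\cE)$. This holds by the argument in Lemma \ref{lem:specrad_strict}, using strong limits of compressions to finite-dimensional subspaces. Applying $\mathrm{ev}_X\otimes\id$ shows that $G(X)$ inverts $L_A(X)$. For a similar point $S^{-1}XS$, note that $L_A(S^{-1}XS)=(S\otimes I)^{-1}L_A(X)(S\otimes I)$, which is again invertible.

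\emph{(3)$\Rightarrow$(2)} is trivial.

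\emph{(2)$\Rightarrow$(1)} is the main obstacle. First I would get uniform boundedness. The map $X\mapsto L_A(X)^{-1}$ is defined on the operator-level ball $R\bB_\cE(\infty)$, taking $\cK=\ell^2$. Direct sums of points of $R'\bB^{(\infty)}_\cE$ with $R'<R$ lie in it, and the inverse of a direct sum is the direct sum of inverses. Hence $\sup_{X\in R'\bB_\cE^{(\infty)}}\|L_A(X)^{-1}\|<\infty$; otherwise a direct sum of a sequence with blowing-up inverses would give a noninvertible point.

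The harder part is to get from this uniform bound to the spectral radius. I would use the homogeneity $L_A(\lambda X)$ for $|\lambda|\le 1$. The function $\lambda\mapsto L_A(\lambda X)^{-1}=\sum_n\lambda^n(\sum_j X_j\otimes A_j)^n$ is holomorphic on a disc of radius $>1$ and bounded by a constant $C$ uniform in $X\in R'\bB^{(\infty)}_\cE$. Cauchy estimates then give $\|(\sum_j X_j\otimes A_j)^n\|\le C$ for all such $X$. Now take $X=Z$ realised at the operator level: a direct sum of a dense set of points of $\ol{\bB_\cE}$ gives a universal point whose entries generate $\cE$, or dually $E$, completely isometrically. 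Then $\|\sum_{|w|=n}Z^w\otimes A^w\|\le C R'^{-n}$. By \cite[Proposition 6.6]{Pisier-book}, together with Lemma \ref{lem:specrad_homo}, the left side is the Haagerup norm appearing in \eqref{eq:Hsr} for $\rho_E(A)$. Therefore $\rho_E(A)\le 1/R'<1$.

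The delicate point is justifying that one operator point realises the universal norm of $A(\bB_\cE)\otimes B(\cH)$; this needs separability, or an argument via norming sequences. This is exactly where the operator level is essential, in line with the remark following the theorem.

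The final assertion then follows by scaling. If $\rho_\cE(X)\rho_E(A)<1$, pick $r$ with $\rho_\cE(X)<r$ and $\rho_E(A)<r^{-1}$. By Theorem \ref{thm:SS25}, $X$ is similar to a point of $r\bB_\cE^{(\infty)}$, and (3) then gives invertibility of $L_A(X)$.
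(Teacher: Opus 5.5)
Your argument is correct, but it is organized differently from the paper's. The paper does not run a cycle. It first proves the final assertion directly: it applies Theorem \ref{thm:SS25} to both $X$ and $A$ and uses the inequality $\|\sum_j a_j\otimes b_j\|\le\|a\|_\cE\|b\|_E$, so that $(S\otimes T)^{-1}L_A(X)(S\otimes T)$ is $I$ minus a strict contraction; (1)$\Rightarrow$(3) then follows at once. It proves (1)$\Leftrightarrow$(4) separately. One direction uses \eqref{eq:Hsr} and a Neumann series; the other uses the Taylor--Taylor series theory of \cite{KVV}, namely bounded convergence of $\sum_w Z^w\otimes A^w$ on sub-balls.

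Your cycle replaces both the tensor inequality and the appeal to \cite{KVV} with the operator-level evaluation representations of Lemma \ref{lem:specrad_strict}. This is arguably more elementary, and it makes (4)$\Rightarrow$(1) a consequence of (4)$\Rightarrow$(3)$\Rightarrow$(2)$\Rightarrow$(1).

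For (2)$\Rightarrow$(1), both proofs evaluate at a single universal operator point, but the points differ. The paper takes the generators $Z_j$ of $C^*_{\max}(E)$, realized as operators via a faithful representation. Then $t^{-1}\notin\sigma(\sum_j Z_j\otimes A_j)$ for $0<|t|<R$, which gives $\rho_E(A)\le R^{-1}$ directly from the definition of $\rho_E$. You build the point by hand from a dense sequence, so it realizes $\oa(E)\cong A(\bB_\cE)$ instead, and you then need Pisier's Proposition 6.6 together with \eqref{eq:Hsr} to match norms with $\rho_E$. This is valid, and the ``delicate point'' you flag is harmless because each level of $\bM^d$ is separable.

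Your uniform-boundedness step via direct sums is superfluous. You only use one point, and for a single operator $T$, invertibility of $I-\lambda T$ for all $|\lambda|<R$ already gives $\rho(T)\le R^{-1}$; the Cauchy estimates just re-derive this.
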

\begin{proof}
    The equivalence of items (\ref{it:RBEinf}) and (\ref{it:RBEinf_sim}) is clear and will not be remarked upon further. We will begin by proving the final assertion of the theorem.
    
    Suppose that $X \in B(\cK)^d$ and that $\rho_\cE(X) \rho_E(A) < 1$. 
    Then, by Theorem \ref{thm:SS25}, there exist invertible operators $S,T$ such that $\|S^{-1} X S\|_{\cE} \|T^{-1}AT\|_E < 1$. 
    By the basic operator space-theoretic inequality $\|\sum_j a_j \otimes b_j \| \leq \|a\|_\cE \|b\|_E$ (see \cite[Equation (2.11.2)]{PisierBook}), it follows that 
    \[
    \left\|\sum_{j=1}^d S^{-1} X_j S \otimes T^{-1}A_jT \right\| \leq \|S^{-1} X S\|_{\cE} \|T^{-1}AT\|_E < 1 .
    \]
    Thus, the operator 
    \[
    I - \sum_{j=1}^d S^{-1} X_j S \otimes T^{-1}A_jT = (S \otimes T)^{-1} \left(I - \sum_{j=1}^d X_j \otimes A_j \right)(S \otimes T)
    \]
    is invertible, and so $I - \sum_{j=1}^d X_j \otimes A_j$ is invertible, too, whence $X \in \dom^{(\infty)}(L^{-1}_A)$. 
    
    We now turn prove that item (\ref{it:rho_lt_r}) is equivalent to items (\ref{it:RBEinf}) and (\ref{it:RBEinf_sim}). 
    It follows from the above paragraph that if $\rho_E(A) < r^{-1}$ then the similarity envelope of ${r\bB_\cE^{(\infty)}}$ is contained in $\dom^{(\infty)}(L^{-1}_A)$. 
    But if $\rho_E(A) < r^{-1}$ then $\rho_E(A) < R^{-1}$ for some $R > r$, and we obtain that the similarity envelope of ${R\bB_\cE^{(\infty)}}$ is contained in $\dom^{(\infty)}(L^{-1}_A)$. 

    Suppose now that $R\bB_\cE^{(\infty)}$ is contained in $\dom^{(\infty)}(L^{-1}_A)$ for some $R > r$. 
    Let $t \in \bC\setminus \{0\}$ such that $|t| < R$. 
    Let $Q_1, \ldots, Q_d$ denote the basis that induces the operator structure $\cE$ on $\bC^d$ and let $Z_1, \ldots, Z_d$ denote the dual basis that induces the operator space structure $E = \cE^*$ on $\bC^d$. 
    Think of $Z_1, \ldots, Z_d$ as elements in $C^*_{\max}(E)$.
    We have that $\|\sum_j Z_j \otimes Q_j\| \leq 1$ and therefore $(t Z_1, \ldots, tZ_d) \in R\bB_\cE^{(\infty)} \subset \dom(L^{-1}_A)$. 
    This means that $1 - t \sum_j Z_j \otimes A_j$ is invertible in $C^*_{\max}(E) \otimes B(\cH)$, therefore $t^{-1}$ is not in the spectrum of $\sum Z_j \otimes A_j$.  
    Since this holds for all $0<|t|<R$, we have that the spectrum of $\sum Z_j \otimes A_j$ is contained in $R^{-1}\ol{\bD}$, or
    \[
    \rho_E(A) = \rho\left(\sum Z_j \otimes A_j\right) \leq R^{-1} < r^{-1}, 
    \]
    as required. 
    
    It remains to show that item (\ref{it:rho_lt_r}) is equivalent to item (\ref{it:inABE}). 
    If $\rho_E(A) < r^{-1}$, then $\rho_E(A) < r_1^{-1}$ for some $r_1>r$, and so by \eqref{eq:Hsr} we have that the Neumann series $\sum_{n=0}^\infty \sum_{|w|=n} (RZ)^w \otimes A^w$ converges in norm for every $R \in (r,r_1)$, and so we obtain that $L_A$ is invertible in $A(R\bB_\cE) \otimes B(\cH)$ for some $R>r$. 
    Conversely, suppose that there is some $R > r$ such that $L_A$ has an inverse $A(R\bB_\cE) \otimes B(\cH)$. 
    Then, for every $r_0 \in (r,R)$, we know by \cite{KVV} that the $B(\cH)$-valued NC function $L_A(Z)^{-1} = \left(1 \otimes I_\cH - \sum_j Z_j \otimes A_j \right)^{-1}$ has a boundedly convergent Taylor--Taylor series in $r_0 \bB_\cE$. But we know that near the origin of $\bM^d$ this Taylor--Taylor series is given by 
    \be\label{eq:TTseries}
    \sum_{n=0}^\infty \sum_{|w|=n} Z^w \otimes A^w, 
    \ee 
    and so by uniqueness of the Taylor--Taylor series, the series \eqref{eq:TTseries} converges boundedly in $r_0\bB_\cE$, which means that
    \[
    \left\| r_0^n \sum_{|w|=n} Z^w \otimes A^w \right\| \leq C
    \]
    for some $C>0$, whence 
    \[
    \rho_E(A) = \lim_{n \to \infty} \left\|\sum_{|w|=n} Z^w \otimes A^w \right\|^{1/n} \leq r_0^{-1} < r^{-1}.  
    \]
\end{proof}

\begin{corollary}
    Let $(A,b,c)$ be an operator realization. Then if any one of the equivalent conditions in Theorem \ref{thm:rVSdom} holds, then the function $f$ defined by \eqref{eq:f_realization} is in $A(R\bB_\cE)$. 
\end{corollary}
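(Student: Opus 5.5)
The plan is to use item (\ref{it:inABE}) of Theorem \ref{thm:rVSdom}, which is equivalent to the other conditions. It says that for some $R > r$ the pencil $L_A(Z) = 1 \otimes I_\cH - \sum_j Z_j \otimes A_j$ has an inverse $L_A(Z)^{-1} \in A(R\bB_\cE) \otimes B(\cH)$. (Throughout, $A(R\bB_\cE)\otimes B(\cH)$ denotes the completed minimal tensor product, viewed as $B(\cH)$-valued NC functions on $R\bB_\cE$.) Then $f$ is obtained from this inverse by compressing with the vectors $b$ and $c$. For every $X \in R\bB_\cE \cap M_n^d$ we have $X \in \dom(L_A^{-1})$, and the value of the inverse at $X$ is $L_A(X)^{-1} = (I_n\otimes I_\cH - \sum_j X_j\otimes A_j)^{-1}$. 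This holds because evaluation at $X$ is a unital completely contractive homomorphism of $A(R\bB_\cE)$, so its ampliation by $B(\cH)$ preserves inverses. Consequently
\[
f(X) = (I_n\otimes b)^*\, L_A(X)^{-1}\, (I_n\otimes c),
\]
so $f$ extends from a neighborhood of $0$ to all of $R\bB_\cE$ and agrees there with $(\id \otimes \omega_{b,c})(L_A(Z)^{-1})$, where $\omega_{b,c}(T) = \langle Tc, b\rangle$ is a bounded linear functional on $B(\cH)$.

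The next step is to check that the slice map $\id\otimes\omega_{b,c} \colon A(R\bB_\cE)\otimes_{\min} B(\cH) \to A(R\bB_\cE)$ is well defined and bounded, with norm at most $\|b\|\|c\|$. This is the standard slice-map property of the minimal tensor product. It sends elementary tensors $p\otimes T$ with $p$ a polynomial to $\langle Tc,b\rangle p$, and it commutes with point evaluations. Hence the image of $L_A(Z)^{-1}$ lies in the closure of the polynomials, namely $A(R\bB_\cE)$. Its value at each $X$ is the compression displayed above, that is, $f(X)$. Therefore $f \in A(R\bB_\cE)$.

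The main point to be careful about is the identification of evaluation of elements of $A(R\bB_\cE)\otimes B(\cH)$ with the concrete operators $L_A(X)^{-1}$. The cleanest route is probably to bypass abstract identifications and use the Neumann/Taylor series from the proof of Theorem \ref{thm:rVSdom}. For $r_0\in(r,R)$, the series $\sum_n\sum_{|w|=n} Z^w\otimes A^w$ converges boundedly on $r_0\bB_\cE$; under condition (\ref{it:rho_lt_r}) it even converges in norm by \eqref{eq:Hsr}. Applying $\omega_{b,c}$ termwise gives the series $\sum_w \langle A^w c, b\rangle Z^w$, which converges in norm in $A(r_0\bB_\cE)$ and represents $f$ near $0$, hence everywhere on $r_0\bB_\cE$. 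This yields $f\in A(r_0\bB_\cE)$ for some $r_0>r$, which is the claimed membership with $R$ replaced by $r_0$.
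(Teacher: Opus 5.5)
Your proposal is correct and matches the argument the paper leaves implicit (the corollary is stated without proof, as an immediate consequence of item (\ref{it:inABE}) of Theorem \ref{thm:rVSdom}): slicing the inverse $L_A(Z)^{-1}\in A(R\bB_\cE)\otimes B(\cH)$ by the vector functional $T\mapsto\langle Tc,b\rangle$ gives an element of $A(R\bB_\cE)$ whose value at each $X$ is $(I_n\otimes b)^*L_A(X)^{-1}(I_n\otimes c)=f(X)$. Your Neumann-series alternative in the last paragraph is also valid, but it is not needed, since the slice-map argument together with the fact that point evaluations are unital homomorphisms already settles the identification.
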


\begin{corollary}
    \[
    \sup\left\{R : R\bB^{(\infty)}_\cE \subset \dom(L_A^{-1})\right\} = \frac{1}{\rho_E(A)} . 
    \]
\end{corollary}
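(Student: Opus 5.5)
The corollary asserts
\[
\sup\{R : R\bB^{(\infty)}_\cE \subset \dom(L_A^{-1})\} = \rho_E(A)^{-1},
\]
with the conventions $1/0=\infty$ and $\sup\emptyset=0$. I read $\dom(L_A^{-1})$ as $\dom^{(\infty)}(L_A^{-1})$, as in item (\ref{it:RBEinf}) of Theorem \ref{thm:rVSdom}. Write $R_0$ for the supremum. The plan is to obtain both inequalities $R_0 \geq \rho_E(A)^{-1}$ and $R_0 \leq \rho_E(A)^{-1}$ by applying the equivalence (\ref{it:rho_lt_r})$\Leftrightarrow$(\ref{it:RBEinf}) of Theorem \ref{thm:rVSdom} for suitable values of $r$.

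For $R_0 \geq \rho_E(A)^{-1}$, fix any $r>0$ with $r < \rho_E(A)^{-1}$; every $r>0$ qualifies if $\rho_E(A)=0$. Then $\rho_E(A) < r^{-1}$, which is item (\ref{it:rho_lt_r}). Item (\ref{it:RBEinf}) then gives some $R>r$ with $R\bB_\cE^{(\infty)} \subset \dom^{(\infty)}(L_A^{-1})$, so $R_0 \geq R > r$. Letting $r \uparrow \rho_E(A)^{-1}$ gives the inequality, and in the case $\rho_E(A)=0$ it gives $R_0=\infty$.

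For $R_0 \leq \rho_E(A)^{-1}$, take any $R>0$ with $R\bB_\cE^{(\infty)} \subset \dom^{(\infty)}(L_A^{-1})$ and any $r \in (0,R)$. Item (\ref{it:RBEinf}) holds for this $r$, witnessed by $R$ itself, so item (\ref{it:rho_lt_r}) yields $\rho_E(A) < r^{-1}$. Since $r<R$ was arbitrary, $\rho_E(A) \leq R^{-1}$, that is, $R \leq \rho_E(A)^{-1}$. Taking the supremum over all admissible $R$ gives $R_0 \leq \rho_E(A)^{-1}$.

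Neither direction is a real obstacle, since all the substance is already in Theorem \ref{thm:rVSdom}. The only points needing care are the strict-versus-non-strict inequalities, which is why we pass to a nearby $r<R$, and the degenerate cases. When $\rho_E(A)=0$ the supremum is infinite. The set of admissible $R$ is never empty, because small balls lie in the domain: for $\|X\|_\cE < (\sum_j\|A_j\|\cdot C)^{-1}$, where $C$ bounds the coordinate functionals, the Neumann series for $L_A(X)^{-1}$ converges. If the statement is meant with the matrix-level domain $\dom(L_A^{-1})$ instead, the inequality $R_0 \geq \rho_E(A)^{-1}$ is unchanged. The reverse inequality would need the operator level, and Example \ref{ex:not_persist} suggests it genuinely fails at the matrix level, so we adopt the $(\infty)$ reading.
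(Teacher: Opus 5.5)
Your proof is correct and follows the intended route: the paper gives no separate argument, and the corollary is read off from the equivalence (\ref{it:rho_lt_r})$\Leftrightarrow$(\ref{it:RBEinf}) in Theorem \ref{thm:rVSdom}, exactly as you do, including passing to an intermediate $r<R$ to handle the strict inequalities. You are also right to read $\dom(L_A^{-1})$ as $\dom^{(\infty)}(L_A^{-1})$: in Example \ref{ex:not_persist} the matrix-level supremum is $1$, which is strictly larger than $1/\rho_E(A)=1/\sqrt{d}$.
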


In \cite[Theorem 3.4]{shalsham2025spectral} the equivalence of items (\ref{it:rho_lt_r}) and (\ref{it:inABE}) from Theorem \ref{thm:rVSdom} was shown in the case that $\dim \cH < \infty$, and it was also shown that in that case they are also equivalent to the following two conditions: 
\begin{itemize}
          \item[$(2')$] $R\bB_\cE\subset \dom(A,b,c)$ for some $R > r$. \label{it:RBE}
        \item[$(3')$] The similarity envelope of $R\bB_\cE$ is contained in $\dom(A,b,c)$ for some $R > r$. \label{it:RBE_sim}
\end{itemize}
The following example shows that this equivalence does not persist in the case that $A$ is an operator tuple. 

\begin{example}\label{ex:not_persist}
    Fix $d \geq 2$. We  define the operator space structures
    \[
    \cE = \left(\ell^2_d\right)_{\operatorname{row}} \quad \quad \quad \textrm{ (the row operator space),}
    \]
    and 
    \[
    E = \cE^* = \left(\ell^2_d\right)_{\operatorname{col}} \quad \quad \textrm{ (the column operator space).}
    \]
    Let $\cH = \cF_d^2 := \oplus_{n=0}^\infty (\bC^d)^{\otimes n}$ be the full Fock space on $d$ generators. 
    Fix an orthonormal basis $\{e_1, \ldots, e_d\}$ for $\bC^d$, and let $A = L$ be the full shift on the Fock space given by $A_j = L_j$ for $j=1, \ldots, d$, where
    \[
    L_j e_{i_1} \otimes \cdots \otimes e_{i_n} = 
    e_j \otimes e_{i_1} \otimes \cdots \otimes e_{i_n}, 
    \]
    for any $i_1, \ldots, i_n \in \{1, \ldots, d\}$. 
    To show that, for this choice of $A$, condition $(2')$ (and therefore  also $(3')$) is not equivalent to conditions (\ref{it:RBEinf}) and (\ref{it:rho_lt_r}) of Theorem \ref{thm:rVSdom}, we to find $r>0$ and $R>r$ such that $R\bB_\cE \subset \dom(L^{-1}_A)$ but $R\bB^{(\infty)}_\cE \nsubseteq \dom^{(\infty)}(L^{-1}_A)$; equivalently, we need to find $r>0$ and $R>r$ such that $R\bB_\cE \subset \dom(L^{-1}_A)$ such that $\rho_E(A) \nless r^{-1}$. 

    To that end, choose $r$ such that ${1}/{\sqrt{d}} < r < 1$, and let $R=1$. We first show that $R\bB_\cE = \bB_\cE \subset \dom(L^{-1}_A)$. 
    Since $\cE$ is the row operator space, $\bB_\cE$ is nothing but the familiar row ball $\fB_d$ defined in \eqref{eq:Ball} consisting of strict row contractions. 
    Now let $X\in \bB_\cE$, and put $\|X\|_{\operatorname{row}} = \|\sum X_j X_j^*\|^{1/2} =: t$. 
    Then 
    \[
    \sum_{j=1}^d X_j X_j^* \leq t^2 I, 
    \]
    and by induction we obtain for all $n$ 
    \[
    \sum_{|w|=n}^d X^w X^{w*} \leq t^{2n} I. 
    \]
    Now, we use the fact that $A$ is a row isometry to find
    \[
    \left(\sum_{|w|=n} X^w \otimes A^w\right)^*\left(\sum_{|w|=n} X^w \otimes A^w\right)  = \sum_{|w|=n} X^{w*}X^w \otimes A^{w*} A^w = \sum_{|w|=n} X^{w*}X^w \otimes I 
    \]
    and therefore
    \be\label{eq:sumwxa}
    \left\|\sum_{|w|=n} X^w \otimes A^w \right\| = \left\| \sum_{|w|=n} X^{w*}X^w \otimes I \right\|^{1/2} = \left\| \sum_{|w|=n} X^{w*}X^w \right\|^{1/2}. 
    \ee
    Suppose that the $X$ we started with lies in $\bB_\cE(k)$, that is, $X$ is a tuple of $ k \times k$ matrices. 
    For $B \in M_k(\bC)$ we have the inequalities $\|B\|^2 \leq \Tr(M^*M) \leq k \|M\|^2$, thus
    \begin{align*}
    \left\|\sum_{|w|=n} X^w \otimes A^w \right\|^2
    & \leq \Tr\left( \sum_{|w|=n} X^{w*}X^w \right) \\
    &= \Tr\left( \sum_{|w|=n} X^wX^{w*} \right) \\
    & \leq k \left\|\sum_{|w|=n} X^wX^{w*} \right\| \leq k t^{2n}. 
    \end{align*}
    Therefore, the Neumann series $\sum_{n=0}^\infty \sum_{|w|=n} X^w \otimes A^w$ converges, which means that $L_A(X)$ is invertible, or in other words $X \in \dom(L_A^{-1})$. 
    We conclude that $\bB_\cE \subset \dom(L^{-1}_A)$. 

    Next, we shall find an element in $\bB_\cE^{(\infty)}$ which is not in the operator domain $\dom^{(\infty)}(L_A^{-1})$ of $L_A^{-1}$. 
    Let $V = (V_1, \ldots, V_d)$ be a row isometry, that is, 
    \[
    V_i^* V_j = \delta_{ij}I \,\, , \,\, \textrm{ for all } i,j=1, \ldots, d. 
    \]
    In particular, $V$ is a row contraction. So $Y = \frac{1}{\sqrt{d}} V \in \bB_\cE^{(\infty)}$. 
    Then 
    \begin{align*}
    \left(\sum_{j=1}^d Y_j \otimes A_j\right)^* \left(\sum_{j=1}^d Y_j \otimes A_j\right) 
    &= \frac{1}{d}\sum_{i,j=1}^d V_i^* V_j \otimes A_i^* A_j \\
    &=\frac{1}{d} \sum_{j=1}^d I \otimes I = I, 
    \end{align*}
    meaning that $\sum_{j=1}^d Y_j \otimes A_j$ is an isometry. 
    Since this isometry is not surjective, the spectrum of $\sum_{j=1}^d Y_j \otimes A_j$ contains the unit circle, therefore $L_A(Y) = I - \sum_{j=1}^d Y_j \otimes A_j$ is not invertible. 
    So $Y \in \frac{1}{\sqrt{d}}\ol{\bB^{(\infty)}_\cE}$ while $Y \notin \dom^{(\infty)}(L_A^{-1})$, which entails that $R \bB^{(\infty)}_\cE \nsubseteq \dom^{(\infty)}(L_A^{-1})$ for any $R > r > 1/\sqrt{d}$, as we wanted to show. 

    Finally, since $A$ is a row isometry, then as we showed in Example \ref{ex:row_col_equal} (see Equation \eqref{eq:VstarV}) we have that $\rho_E(A) = \sqrt{d}$. 
    This means that for every $r \in (1/\sqrt{d},1)$, we have for some $R > r$ that $R\bB_\cE \subset \dom(L_A^{-1})$, while at the same time $\rho_E(A) \nless r^{-1}$, showing that condition $(2')$ stated before the example is not equivalent to the first item in Theorem \ref{thm:rVSdom}.
\end{example}

\begin{remark}
    The phenomenon observed is in the previous example is a multivariable phenomenon: in the case $d = 1$ the matrix ball $\bB_\cE$ can replace the operator-level ball $\bB^{(\infty)}_\cE$ in item (\ref{it:RBEinf}) of Theorem \ref{thm:rVSdom}. Indeed, in the case $d = 1$ then up to scaling $\cE = E = \bC$, so if $A \in B(\cH)$ and $I - X \otimes A$ is invertible for all $X \in \bB_\cE$, then $I - zA$ is invertible for all $z \in \bD$, and this readily shows that the spectrum of $A$ is in $\ol{\bD}$, i,e, $\rho(A) \leq 1$. 
\end{remark}

\begin{remark}
    The above example is analogous to the one in \cite{Pascoe20} in the following sense. The pencil $L_A(X) = I - \sum_{j=1}^d X_j \otimes A_j$ is invertible on the unit ball $\bB_\cE = \fB_d$, while its inverse is bounded only on sub-balls of the ball of radius $1/\sqrt{d}$. The reason for the latter claim is Theorem \ref{thm:rVSdom}. It can also be verified directly using the comparison between the row and column norms. On the other hand, we claim that the inverse of the pencil is unbounded on $R\bB_{\cE}$ for every $1/\sqrt{d} \leq R < 1$. To see this take $B^{(n)} = \frac{1}{\sqrt{d}} P_n L|_{\cF^2_{d,n}}$, where $\cF^2_{d,n}$ is the subspace of the full Fock space spanned by the monomials of degree at most $n$ and $P_n$ the orthogonal projection onto $\cF^2_{d,n}$. Since $\cF^2_{d,n}$ is $L$-coinvariant, this compression is a homomorphism. Therefore, $B^{(n)}$ is jointly nilpotent of order $n+1$. In particular,
    \[
    \left(I - \sum_{j=1}^d B_j^{(n)} \otimes A_j\right)^{-1} = I + \sum_{k=1}^n \sum_{|w|=k} (B_j^{(n)})^w \otimes A^w = I + \sum_{k=1}^n d^{-k/2} \sum_{|w|=k} P_n L^w|_{\cF^2_{d,n}} \otimes L^w.
    \]
    Now let $1 \in \cF^2_d$ be the vacuum vector. Then,
    \[
    \left\| \left(I - \sum_{j=1}^d B_j^{(n)} \otimes A_j\right)^{-1}(1 \otimes 1) \right\|^2 = n + 1.
    \]
    Therefore, the inverse is not bounded on the any closed  row ball of radius at least $1/\sqrt{d}$. 
\end{remark}

Unlike \cite[Theorem 3.4]{shalsham2025spectral}, Theorem \ref{thm:rVSdom} focuses on the domain of $L_A^{-1}$ rather than on the domain of the NC function that has realization $(A,b,c)$. 
The reason for this is that operator realizations are, in general, non-unique, and that even under a minimality assumption, the domain of the pencil appearing in the realization is not the same as the domain of the function. 
This phenomenon arises already in the one variable case, as illustrated by the following example, which is borrowed from \cite{augat2025operator}. 

\begin{example}[Example 3.10 in \cite{augat2025operator}]
The entire function $f(z) = \frac{e^z-1}{z}$ has two different minimal operator realizations $(A,b,c)$; the first given by $\cH = H^2(\bD)$ --- the Hardy space on the unit disc --- and $(A,b,c) = (S^*,1,f)$ where $S$ is the forward shift, and the second given by $\cH = L^2[0,1]$ and $(A,b,c) = (T,1,1)$ where $T$ is the Volterra operator. The domain of $f$ is $\bC$, and it lifts to a NC function whose domain contains all bounded operators. 
The spectral radius of $S^*$ is equal to $1$, so the largest disc contained in the domain of $(I - zS^*)^{-1}$ is the open unit disc, while the spectral radius of $T$ is $0$, and the pencil $L_T(X) = I - X \otimes T$ is invertible for all operators $X$. 
\end{example}

\section{Dependence on operator space structure: a NC function theoretic approach}\label{sec:dependence}

In this section we will apply Theorem \ref{thm:rVSdom} to provide an alternative proof of (a weaker version of) Theorem \ref{thm:selfadj_ineq}. 
We begin by setting some notation, to establish a concrete perspective on selfadjoint operator spaces as discussed in Section \ref{subsec:general}. 
For $x = (x_1, \ldots, x_d) \in \bC^d$ we write $\ol{x} = (\ol{x_1}, \ldots, \ol{x_d})$. 
If $\cE$ is an operator space structure over $\bC^d$ and $X = (X_1, \ldots, X_d) \in M_n(\cE)$, then we define $X^* = (X_1^*, \ldots, X_d^*)$. 
We say that a normed space $V = (\bC^d, \|\cdot\|)$ is {\em invariant under conjugation} if $\|x\| = \|\ol{x}\|$ for all $x \in X$. 
Likewise, we say that an operator space $\cE$ is {\em invariant under taking adjoints} if $\|X\| = \|X^*\|$ for all $n$ and all $X \in M_n(E)$. 

Most familiar norms on $\bC^d$ are invariant under conjugation; for example, $p$-norms and operator norms induced by conjugation invariant norms. Recall that it follows from Lemma \ref{lem: isometric adjoint} that for every such $V$, $\min(V)$ and $\max(V)$ are selfadjoint. We also note that as we have seen in the proof of Lemma \ref{lem: radius=norm}, if $\cE$ is an operator space structure on $\bC^d$ determined by a basis $Q_1, \ldots, Q_d$ that is invariant under taking adjoints, then for every $X \in M_n^d$, 
    \be\label{eq:norm_adj}
    \left\| \begin{pmatrix} 0 & X \\ X^* & 0 \end{pmatrix} \right\|_{\cE} = \|X\|_\cE. 
    \ee

In the following proposition we shall need to consider the selfadjoint slice $\bB^{sa}_{\cE}$ of a NC operator unit ball $\bB_\cE$ defined as follows
\[
\bB_{\cE}^{sa} = \{X \in \bB_\cE : X = X^* \}. 
\]
\begin{lemma}\label{lem:sa_equal}
    Let $\cE_1$ and $\cE_2$ be two operator space structures on $\bC^d$ that are invariant under taking adjoints. 
    If $\bB^{sa}_{\cE_1} \subset \bB^{sa}_{\cE_2}$, then $\bB_{\cE_1} \subset \bB_{\cE_2}$, and consequently if $\bB^{sa}_{\cE_1} = \bB^{sa}_{\cE_2}$ then $\cE_1 = \cE_2$. 
\end{lemma}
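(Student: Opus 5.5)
The plan is to use the selfadjoint dilation trick of \eqref{eq:norm_adj}, which turns an arbitrary matrix tuple into a selfadjoint one of the same norm.

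Suppose $\bB^{sa}_{\cE_1} \subset \bB^{sa}_{\cE_2}$ and let $X \in \bB_{\cE_1}(n)$. Form
\[
\widehat{X} = \begin{pmatrix} 0 & X \\ X^* & 0 \end{pmatrix} \in M_{2n}^d,
\]
where the block matrix is taken coordinatewise, so $\widehat{X}_j = \left(\begin{smallmatrix} 0 & X_j \\ X_j^* & 0\end{smallmatrix}\right)$. Each $\widehat{X}_j$ is selfadjoint, hence $\widehat{X} = \widehat{X}^*$. Since $\cE_1$ is invariant under taking adjoints, \eqref{eq:norm_adj} gives $\|\widehat{X}\|_{\cE_1} = \|X\|_{\cE_1} < 1$, so $\widehat{X} \in \bB^{sa}_{\cE_1}(2n) \subset \bB^{sa}_{\cE_2}(2n)$. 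Applying \eqref{eq:norm_adj} again, now for $\cE_2$, yields $\|X\|_{\cE_2} = \|\widehat{X}\|_{\cE_2} < 1$, that is, $X \in \bB_{\cE_2}$. This proves $\bB_{\cE_1}\subset\bB_{\cE_2}$.

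For the consequence, equality of the selfadjoint slices gives $\bB_{\cE_1} = \bB_{\cE_2}$ by applying the inclusion both ways. Matrix norms are Minkowski functionals of the open unit balls at each level, since $\|X\|_n = \inf\{t>0 : X/t \in \bB(n)\}$ by homogeneity. Hence the matrix norms agree at every level, and $\cE_1 = \cE_2$.

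The only point that needs checking is \eqref{eq:norm_adj} itself: that the identity holds for an arbitrary concrete basis invariant under adjoints, not only for the embedding with $\tau(x)=x^*$. Write $\|X^*\|_\cE$ for $\|(X_1^*,\dots,X_d^*)\|$. By the off-diagonal norm identity \eqref{eq:mat_norm}, valid in any operator space, the left side of \eqref{eq:norm_adj} equals $\max\{\|X\|_\cE,\|X^*\|_\cE\}$. Adjoint invariance makes this equal to $\|X\|_\cE$.
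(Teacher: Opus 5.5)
Your proof is correct and follows the paper's argument: you dilate $X$ to the selfadjoint block tuple $\left(\begin{smallmatrix} 0 & X \\ X^* & 0\end{smallmatrix}\right)$, apply \eqref{eq:norm_adj} first in $\cE_1$ and then in $\cE_2$, and deduce $\cE_1=\cE_2$ from $\bB_{\cE_1}=\bB_{\cE_2}$. Your extra checks, deriving \eqref{eq:norm_adj} from \eqref{eq:mat_norm} plus adjoint invariance and recovering the matrix norms as Minkowski functionals of the balls, only spell out steps the paper leaves implicit.
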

\begin{proof}
    If $X \in \bB_{\cE_1}$, then $\left(\begin{smallmatrix}
        0 & X \\ X^* & 0 
    \end{smallmatrix}\right) \in \bB_{\cE_1}^{sa}$, so by the assumption $\bB_{\cE_1}^{sa} \subset \bB_{\cE_2}^{sa}$ we have $\left(\begin{smallmatrix}
        0 & X \\ X^* & 0 
    \end{smallmatrix}\right) \in \bB_{\cE_2}^{sa}$. 
    Therefore, $X \in \bB_{\cE_2}$, therefore $\bB_{\cE_1} \subset \bB_{\cE_2}$, as required. 
    The final assertion follows immediately since $\bB_{\cE_1} = \bB_{\cE_2}$ is the same thing as $\cE_1 = \cE_2$. 
\end{proof}

The following lemma is a coordinate version of Proposition \ref{prop:abs selfadj opspace}.

\begin{lemma}\label{lem:wlog_sa}
    If $\cE$ is an operator space structure on $\bC^d$ determined by a basis $Q_1, \ldots, Q_d \in B(\cH)$ which is invariant under taking adjoints, then the same operator space structure $\cE$ is determined by a basis consisting of selfadjoint operators 
    \[
    \widetilde{Q}_k = \begin{pmatrix} 0 & Q_k \\ Q_k^* & 0 \end{pmatrix}. 
    \]
\end{lemma}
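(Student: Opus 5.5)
We need to show that for every $n$ and every $X \in M_n^d$,
\[
\left\| \sum_{k} X_k \otimes \widetilde{Q}_k \right\| = \left\| \sum_k X_k \otimes Q_k \right\|.
\]
The plan is to compute the left-hand side directly. The tuple $\widetilde{Q}$ lives in $M_2(B(\cH))$, so after the canonical shuffle $M_n \otimes M_2(B(\cH)) \cong M_2(M_n \otimes B(\cH))$ we get
\[
\sum_k X_k \otimes \widetilde{Q}_k = \begin{pmatrix} 0 & \sum_k X_k \otimes Q_k \\ \sum_k X_k \otimes Q_k^* & 0 \end{pmatrix}.
\]
The norm of an off-diagonal $2\times 2$ operator matrix is the maximum of the norms of its two entries (this is \eqref{eq:mat_norm} in the $C^*$-setting). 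Hence the left-hand side equals
\[
\max\left\{\left\|\sum_k X_k\otimes Q_k\right\|,\ \left\|\sum_k X_k \otimes Q_k^*\right\|\right\}.
\]

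The key step is to show that the second entry has the same norm as the first. We take adjoints:
\[
\left(\sum_k X_k \otimes Q_k^*\right)^* = \sum_k X_k^* \otimes Q_k,
\]
and adjoints preserve norm. So its norm is $\|X^*\|_\cE$ with $X^* = (X_1^*, \ldots, X_d^*)$. By the hypothesis that $\cE$ is invariant under taking adjoints, this equals $\|X\|_\cE = \|\sum_k X_k \otimes Q_k\|$. Therefore the maximum collapses to $\|X\|_\cE$ at every matrix level. It follows that $e_k \mapsto \widetilde{Q}_k$ is a complete isometry from $\cE$, so $\widetilde{Q}$ determines the same operator space structure.

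Finally, each $\widetilde{Q}_k$ is visibly selfadjoint. Linear independence is automatic: the map is isometric at level $1$, hence injective, so the $\widetilde{Q}_k$ form a basis. The only point requiring care is the order of tensor factors in the shuffle identification, and that step is routine.
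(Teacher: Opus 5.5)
Your proof is correct and takes essentially the paper's approach. The paper gives no separate proof: it presents the lemma as the coordinate version of Proposition~\ref{prop:abs selfadj opspace}, whose map $x \mapsto \begin{pmatrix} 0 & \pi(x) \\ \pi(\tau(x))^* & 0 \end{pmatrix}$ sends $e_k$ to $\widetilde{Q}_k$ when $\tau$ is coordinatewise conjugation, and which is simply declared completely isometric; your off-diagonal norm computation combined with the adjoint-invariance hypothesis is exactly the verification the paper leaves as obvious.
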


The following theorem follows from Theorem \ref{thm:selfadj_ineq}, but it is strictly weaker since it requires equality of the spectral radii for all operator tuples, not just all matrix tuples. 
We shall use the following notation. 
Let $\cE_1$ and $\cE_2$ be two operator space structures on $\bC^d$, and we let $E_1 = \cE_1^*$ and $E_2 = \cE_2^*$ denote their duals. 
We let $Q^{(i)}_1, \ldots, Q^{(i)}_d$ denote a basis for $\cE_i$ inducing the operator structure on $\bC^d$. 
We also let $Z^{(i)} = (Z^{(i)}_1, \ldots, Z^{(i)}_d)$ be the coordinate functions on $\bB_{\cE_i}$. 
Then $Z^{(i)}$ is a basis for $E_i$ giving rise to the operator space structure $E_i$ on $\bC^d$. 

\begin{theorem}\label{thm:rho_E1E2}
    Let $\cE_1$ and $\cE_2$ be operator spaces that are invariant under taking adjoints, and put $E_1 = \cE_1^*$ and $E_2 = \cE_2^*$. If $\rho_{\cE_1}(T) = \rho_{\cE_2}(T)$ for every $d$-tuple $T \in B(\cH)^d$ on a Hilbert space $\cH$, then $\cE_1 = \cE_2$. 
\end{theorem}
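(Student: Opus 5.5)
The plan is to prove the inclusion $\bB^{sa}_{\cE_1} \subset \bB^{sa}_{\cE_2}$ and its symmetric counterpart, and then conclude $\cE_1 = \cE_2$ by Lemma \ref{lem:sa_equal}. Theorem \ref{thm:rVSdom} will be used to convert spectral radius information into invertibility of pencils, and selfadjointness will turn invertibility into a norm bound.

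\emph{Setup.} Apply Theorem \ref{thm:rVSdom} with the roles of the spaces reversed: take the operator space there to be $E_i$, whose dual is $\cE_i$ since everything is finite dimensional. For $A \in B(\cH)^d$ this reads: $\rho_{\cE_i}(A) < 1$ if and only if $R\bB^{(\infty)}_{E_i} \subset \dom^{(\infty)}(L_A^{-1})$ for some $R>1$. I will also check that $E_i = \cE_i^*$ is again invariant under taking adjoints. The duality pairing is compatible with the involution, since $\|Y^*\|_{M_n(\cE^*)}$ is computed via $X \mapsto \|\sum Y_j^* \otimes X_j\|$ and this is symmetric under $X \mapsto X^*$. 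By Lemma \ref{lem:wlog_sa}, $E_2$ is therefore realized by a tuple of \emph{selfadjoint} operators $Z = (Z_1,\dots,Z_d)$ in some $B(\cK)$. By injectivity of the minimal tensor product and $E_2^* = \cE_2$, for every $X \in M_n^d$ we then have
\[
\Big\|\sum_j Z_j \otimes X_j\Big\| = \|X\|_{\cE_2}.
\]

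\emph{Main step.} Let $X \in \bB^{sa}_{\cE_1}$. By the hypothesis and Theorem \ref{thm:SS25},
\[
\rho_{\cE_2}(X) = \rho_{\cE_1}(X) \le \|X\|_{\cE_1} < 1 .
\]
Theorem \ref{thm:rVSdom} then gives $R > 1$ with $R\bB^{(\infty)}_{E_2} \subset \dom^{(\infty)}(L_X^{-1})$. Since $\|Z\|_{E_2} \le 1$, the point $tZ$ lies in $R\bB^{(\infty)}_{E_2}$ for every $|t|<R$. Hence $I - tW$ is invertible for all $|t|<R$, where $W = \sum_j Z_j \otimes X_j$. Because both $Z$ and $X$ are selfadjoint tuples, $W$ is selfadjoint, so its spectrum is real and contained in $[-R^{-1}, R^{-1}]$. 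This gives $\|W\| = \rho(W) \le R^{-1} < 1$, that is, $\|X\|_{\cE_2} < 1$. So $\bB^{sa}_{\cE_1} \subset \bB^{sa}_{\cE_2}$, and by symmetry of the hypothesis the two selfadjoint slices coincide. Lemma \ref{lem:sa_equal} then yields $\cE_1 = \cE_2$.

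\emph{Main obstacle.} The delicate point is the passage from invertibility of the pencil along the line $tZ$ to a norm bound. This requires both that the test point $Z$ can be chosen selfadjoint, via the adjoint invariance of the dual $E_2$ together with Lemma \ref{lem:wlog_sa}, and that $Z$ sits at the operator level, which is exactly why the hypothesis must hold for operator tuples. I expect verifying the adjoint invariance of $E_i$ to be the only place needing real care. If it fails, I would fall back on directly embedding $E_2$ via the $2\times 2$ trick of Proposition \ref{prop:abs selfadj opspace}.
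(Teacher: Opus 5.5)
Your architecture is sound, but the test tuple is the wrong one: the duality is reversed exactly at the step you singled out. If $e_j\mapsto Z_j$ is a complete isometry of $E_2$ into $B(\cK)$, then $\bigl\|\sum_j Z_j\otimes X_j\bigr\|=\|X\|_{M_n(E_2)}$, not $\|X\|_{M_n(\cE_2)}$. Moreover, the norm of $Z$ as a point of $B(\cK)\otimes_{\min}E_2\subset\operatorname{CB}(\cE_2,B(\cK))$ is the cb norm of $e_j\mapsto Z_j$ on $\cE_2$, and this is generally larger than $1$.

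For instance, take $d\ge 2$, $\cE_2=\min(\ell^\infty_d)$ and $E_2=\max(\ell^1_d)$; both are adjoint invariant by Lemma \ref{lem: isometric adjoint}. The scalar tuple $X=(1,\dots,1)$ has $\|X\|_{\cE_2}=1$, while $\bigl\|\sum_j Z_j\bigr\|=\|X\|_{\ell^1_d}=d$, and $\|Z\|_{E_2}\ge d$. So both your displayed identity and the claim $tZ\in R\bB^{(\infty)}_{E_2}$ for $|t|<R$ fail as written.

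The tuple you need is a selfadjoint realization of $\cE_2=E_2^*$ itself, namely $Q^{(2)}$. It can be taken selfadjoint by Lemma \ref{lem:wlog_sa} applied directly to $\cE_2$. It satisfies $\|Q^{(2)}\|_{E_2}=\bigl\|\sum_k Z^{(2)}_k\otimes Q^{(2)}_k\bigr\|=1$ by Lemma \ref{lem:specrad_homo}, and also $\bigl\|\sum_j Q^{(2)}_j\otimes X_j\bigr\|=\|X\|_{\cE_2}$. With $Z$ replaced by $Q^{(2)}$, your main step goes through verbatim, and the adjoint invariance of $E_2$ is no longer needed at all.

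Once repaired, your route differs from the paper's. The paper proves the dual statement, that $\rho_{E_1}=\rho_{E_2}$ implies $\cE_1=\cE_2$, in these steps:
\begin{enumerate}
\item It plugs the single operator tuple $Q^{(2)}$ into the hypothesis and gets $\rho_{E_1}(Q^{(2)})=1$ from Lemma \ref{lem:specrad_homo}.
\item It applies Theorem \ref{thm:rVSdom} to $\cE_1$, with pencil coefficients $Q^{(2)}$ and matrix variables $X\in\bB_{\cE_1}$, to get $\bB_{\cE_1}\subset\dom(L^{-1}_{Q^{(2)}})$.
\item It concludes by selfadjointness and a contradiction argument.
\item It passes to the stated theorem by duality; this last step tacitly requires the adjoint invariance of $E_i$, which you verified correctly.
\end{enumerate}

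You swap the roles: the pencil coefficient is the matrix tuple $X$ and the variable is the operator point $tQ^{(2)}$. This works with $\rho_{\cE_i}$ directly and needs no duality step. It also uses the hypothesis only at the matrix tuples $X\in\bB^{sa}_{\cE_i}$. So, contrary to your closing remark, the operator level enters only through the test point, where no hypothesis is invoked. Your argument in fact yields the matrix-level conclusion of Theorem \ref{thm:selfadj_ineq}; in effect it is a pencil-based proof of the inequality $\|X\|_{\cE}\le\rho_{\cE}(X)$ for selfadjoint $X$ from Lemma \ref{lem: radius=norm}. The paper's version, in exchange, uses its (dual) hypothesis only at the two canonical tuples $Q^{(1)},Q^{(2)}$. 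Indeed, it needs only the inequality $\rho_{E_1}(Q^{(2)})\le 1$ and its mirror image.
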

\begin{proof}
    We will prove that the equality $\rho_{E_1}(T) = \rho_{E_2}(T)$ for every $d$-tuple $T \in B(\cH)^d$ on a Hilbert space $\cH$ implies that $\cE_1 = \cE_2$. The assertion stated in the theorem will follow by duality, since, in this $d$-dimensional setting, $\cE_1 = \cE_2$ if and only if $E_1 = E_2$. 

    By Lemma \ref{lem:sa_equal}, it suffices to prove that $\bB^{sa}_{\cE_1} \subset \bB^{sa}_{\cE_2}$ and that $\bB^{sa}_{\cE_2} \subset \bB^{sa}_{\cE_1}$, and we shall content ourselves with one direction. 
    If the assumption $\rho_{E_1}(T) = \rho_{E_2}(T)$ holds for every $T$ then in particular 
    \be\label{eq:1}
    \rho_{E_1}(Q^{(2)}) = \rho_{E_2}(Q^{(2)}) .
    \ee
    By Lemma \ref{lem:wlog_sa} we can assume that $Q^{(1)}$ and $Q^{(2)}$ are tuples of selfadjoint operators.
    By Lemma \ref{lem:specrad_homo} we know that $\rho_{E_2}(Q^{(2)}) = 1$ and so by \eqref{eq:1} we obtain $\rho_{E_1}(Q^{(2)}) = 1$. By Theorem \ref{thm:rVSdom}, for every $r<1$ there exists $R>r$ such that $R\bB^{(\infty)}_{\cE_1} \subset \dom^{(\infty)}(L_{Q^{(2)}}^{-1})$. 
    Consequently, 
    \be\label{eq:B_subset_dom}
    \bB_{\cE_1} \subset \dom(L_{Q^{(2)}}^{-1}) . 
    \ee
    This means that $I - \sum_k X_k \otimes Q^{(2)}_k$ is invertible for every $X \in \bB_{\cE_1}$. 
    Suppose, contra positively, that $\bB^{sa}_{\cE_1}$ is not contained in $\bB^{sa}_{\cE_2}$. 
    Then there exists $X \in \bB^{sa}_{\cE_1}$ such that $\|X\|_{\cE_2} \geq 1$, and in fact we may assume that
    \[
    \left\|\sum_k X_k \otimes Q^{(2)}_k \right\| = \|X\|_{\cE_2} = 1.
    \]
    But since $\sum X_k \otimes Q^{(2)}_k$ is selfadjoint we have that either $1$ or $-1$ is in the spectrum. 
    Assume without loss of generality that $1$ is in the spectrum. Then $X \notin \dom(L_{Q^{(2)}}^{-1})$, in contradiction with \eqref{eq:B_subset_dom}. 
    This contradiction forces $\bB^{sa}_{\cE_1} \subset \bB^{sa}_{\cE_2}$. 
    As noted at the beginning of the proof, an application of Lemma \ref{lem:sa_equal} now finishes the proof. 
\end{proof}

\begin{corollary}\label{cor:rho_Vmax_Vmin}
    Let $V$ be a conjugation invariant normed space. If $\rho_{\min(V)}(T) = \rho_{\max(V)}(T)$ for every $d$-tuple $T \in B(\cH)^d$ on a Hilbert space $\cH$, then $\min(V) = \max(V)$ and there is a unique operator space on $V$. 
\end{corollary}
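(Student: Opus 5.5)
The corollary should follow directly from Theorem \ref{thm:rho_E1E2}, applied with $\cE_1 = \min(V)$ and $\cE_2 = \max(V)$. The plan has three steps.

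First, I check the hypothesis of Theorem \ref{thm:rho_E1E2}, namely that both $\min(V)$ and $\max(V)$ are invariant under taking adjoints. Since $V$ is conjugation invariant, complex conjugation is an isometric antilinear involution on $V$. Lemma \ref{lem: isometric adjoint} then says that $\|X\|_{\min(V)} = \|\tau(X)\|_{\min(V)}$ and $\|X\|_{\max(V)} = \|\tau(X)\|_{\max(V)}$ for all matrix tuples, where $\tau$ is induced by conjugation. On a tuple $X = (X_1,\ldots,X_d)\in M_n^d$ this involution is exactly $X \mapsto X^* = (X_1^*,\ldots,X_d^*)$, since conjugating the coefficients and transposing the matrix gives the entrywise adjoint. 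So $\|X^*\| = \|X\|$ at every level for both structures, which is the invariance required.

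Second, with the hypothesis $\rho_{\min(V)}(T) = \rho_{\max(V)}(T)$ for all operator tuples $T$, Theorem \ref{thm:rho_E1E2} gives $\min(V) = \max(V)$ as operator space structures on $\bC^d$.

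Third, for uniqueness, let $\cE$ be any operator space structure quantizing $V$. The identity maps $\max(V)\to\cE\to\min(V)$ are complete contractions. Since the composite is a complete isometry, both maps are complete isometries, and hence $\cE = \min(V) = \max(V)$.

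The only point needing care is the first step: matching the abstract involution in Lemma \ref{lem: isometric adjoint} with the concrete adjoint-invariance $\|X\| = \|X^*\|$ used in Theorem \ref{thm:rho_E1E2}. This is a direct unwinding of definitions.
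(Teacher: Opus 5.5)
Your proof is correct and matches the paper's argument. The paper likewise uses Lemma \ref{lem: isometric adjoint} to see that $\min(V)$ and $\max(V)$ are invariant under taking adjoints, and then applies Theorem \ref{thm:rho_E1E2}; your third step, which derives uniqueness of the quantization from the chain of complete contractions $\max(V)\to\cE\to\min(V)$, simply makes explicit what the paper leaves implicit.
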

\begin{proof}
    By Lemma \ref{lem: isometric adjoint} the operator spaces $\min(V)$ and $\max(V)$ are invariant under taking adjoints, so Theorem \ref{thm:rho_E1E2} applies.
\end{proof}

\section{Consequences of \texorpdfstring{$\rho_{\cE} = \rho_{\cF}$}{rhoE = rhoF} in general}\label{sec:rho_E_F}

In Section \ref{subsec:general} and in the previous section, we showed that selfadjoint operator spaces are distinguished by the corresponding spectral radii. 
Our results leave open the question of what are the consequences of $\rho_{\cE} = \rho_{\cF}$ for a pair of distinct operator spaces $\cE$ and $\cF$. 
In this section we present an NC function-theoretic consequence. 

\begin{remark}\label{rem:row_col}
    In Example \ref{ex:row_col_equal}, we saw that $\rho_{\operatorname{row}}(X) = \rho_{\operatorname{col}}(X)$ for every $X \in \bM^d$, but that these spectral radii functions do not agree on all operator tuples, since $\rho_{\operatorname{row}}(L) \neq \rho_{\operatorname{col}}(L)$ for the NC shift $L$ on the full Fock space. 
    We do not have an example of a pair of distinct operator spaces $\cE$ and $\cF$ such that $\rho_{\cE}(T) = \rho_{\cF}(T)$ for every operator tuple $T$. 
\end{remark}

For an operator space structure $\cE$ on $\C^d$, we let $\cO(\bB_{\cE})$ denote the algebra of all NC functions $f$ defined on $\bB_{\cE}$ such that for every $0 < r < 1$, $f$ is uniformly bounded on $r \overline{\bB_{\cE}}$. In particular, by the results of \cite{KVV}, we know that the Taylor-Taylor expansion of $f$ converges uniformly on every sub-ball of $\bB_{\cE}$. We can define a topology on $\cO(\bB_{\cE})$ by the obvious collection of seminorms
\[
\sigma_r(f) = \sup\{ \|f(X)\| \mid X \in r \overline{\bB_{\cE}} \}, \text{ for } 0 < r < 1.
\]
Moreover, one can similarly define a matrix family of seminorms on $M_n(\cO(\bB_{\cE}))$ for every $n \in \bN$. This topology makes $\cO(\bB_{\cE})$ into a Frechet-Montel locally multiplicatively convex algebra. The matricial structure makes it a local operator algebra in the sense of Effros and Webster \cite{EffWeb}. Let us denote the $d$-tuple of generators of $A(\bB_{\cE})$ by $Z = (Z_1,\ldots,Z_d)$. It is not hard to check that
\[
\cO(\bB_{\cE}) = \varprojlim_{r \to 1^+} A(r \bB_{\cE}) = \varprojlim_{r \to 1^+} H^{\infty}(r\bB_{\cE}).
\]
In particular, $\sigma_r(f) = \|f(rZ)\|$. See \cite{AMS-entires} for the proofs for the case of the row ball, which generalize verbatim to this setting.

\begin{theorem}\label{thm:spr_OBE}
    Let $\cE_1$ and $\cE_2$ be two operator space structures on $\C^d$ and let $E_1 = \cE_1^*$ and $E_2 = \cE_2^*$. If $\rho_{\cE_1}(T) = \rho_{\cE_2}(T)$ for every $T \in B(\cH)^d$, then $\cO(\bB_{\cE_1}) = \cO(\bB_{\cE_2})$ as sets of functions. Moreover, the identity map is a complete homeomorphism. 
\end{theorem}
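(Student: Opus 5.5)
The plan is to compare the seminorms $\sigma_r$ on $\cO(\bB_{\cE_1})$ and $\cO(\bB_{\cE_2})$ via the spectral radius and Theorem \ref{thm:SS25}. By symmetry, it suffices to show the following: for every $0<r<1$ there is $s\in(r,1)$ and a constant $C_r$ with $\sigma^{(2)}_r(f)\le C_r\,\sigma^{(1)}_s(f)$ for all $f\in\cO(\bB_{\cE_1})$ (and matrix-valued analogues). In addition, every $f\in\cO(\bB_{\cE_1})$ should extend to an NC function on $\bB_{\cE_2}$ that is bounded on each $r\ol{\bB_{\cE_2}}$.

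\textbf{Step 1: evaluate $f$ at the operator tuple $rZ^{(2)}$.} Let $Z^{(2)}$ be the generators of $A(\bB_{\cE_2})$, realized as an operator tuple, for instance through a faithful representation of $A(\bB_{\cE_2})$. By Lemma \ref{lem:specrad_homo}, $\rho_{\cE_2}(Z^{(2)})=1$. The hypothesis then gives $\rho_{\cE_1}(rZ^{(2)})=r<s$, so by Theorem \ref{thm:SS25} there is an invertible $S$ with $\|S^{-1}(rZ^{(2)})S\|_{\cE_1}<s$. Thus $Y:=S^{-1}rZ^{(2)}S\in s\bB^{(\infty)}_{\cE_1}$. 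Since $f\in\cO(\bB_{\cE_1})$ is bounded on $s\ol{\bB_{\cE_1}}$, the evaluation $g\mapsto g(Y)$ is completely contractive from $A(s\bB_{\cE_1})$ to $B(\cK)$; this is the argument in the proof of Lemma \ref{lem:specrad_strict}, where compressions to finite-dimensional subspaces and SOT limits are used. Because $f$ is a uniform limit of polynomials on $s'\ol{\bB_{\cE_1}}$ for any $s<s'<1$ (Taylor--Taylor series, \cite{KVV}), $f(Y)$ is defined and $\|f(Y)\|\le\sigma^{(1)}_{s}(f)$.

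\textbf{Step 2: transfer back.} Set $F:=Sf(Y)S^{-1}$; formally $F=f(rZ^{(2)})$. More precisely, $F$ is the image of $f$ under the bounded homomorphism $A(s\bB_{\cE_1})\to B(\cK)$, $Z^{(1)}_j\mapsto rZ^{(2)}_j$, with $\|F\|\le\|S\|\|S^{-1}\|\,\sigma^{(1)}_s(f)$. On polynomials, the homogeneous components map as $p\mapsto p(rZ^{(2)})$. Therefore the Taylor series of $f$, evaluated at $rZ^{(2)}$, converges in $A(\bB_{\cE_2})$-norm: convergence at $Y$ holds because $\|Y\|_{\cE_1}<s<s'$, and it is transported by $\Ad_S$. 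Consequently $F$ is an element of $A(r\bB_{\cE_2})$ agreeing with $f$ near $0$. By uniqueness of Taylor--Taylor expansions and the identity $\sigma_r(g)=\|g(rZ)\|$, we get $f\in\cO(\bB_{\cE_2})$ with $\sigma^{(2)}_r(f)\le C_r\sigma^{(1)}_s(f)$, where $C_r=\|S\|\|S^{-1}\|$ depends only on $r$, not on $f$. Here one must take care that "$f$ as a function on $\bB_{\cE_2}$" is well defined: the Taylor series defines it on $r\bB_{\cE_2}$ for each $r$, and these definitions are compatible.

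\textbf{Step 3: matrix levels and conclusion.} Replacing $f$ by $F\in M_n(\cO(\bB_{\cE_1}))$ and $S$ by $I_n\otimes S$ gives the same estimate with the same constant, so the identity is completely continuous. Swapping the roles of $\cE_1$ and $\cE_2$ gives the reverse inclusion and continuity, hence a complete homeomorphism.

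The main obstacle is Step 1/2: making rigorous that the operator-level point $Y$ in $s\bB^{(\infty)}_{\cE_1}$ induces a contractive evaluation on all of $\cO(\bB_{\cE_1})$, not just on polynomials. It is also essential that one similarity $S$ works uniformly for all $f$. The latter is automatic, since $S$ depends only on $Z^{(2)}$ and $r$. This is exactly where operator tuples, rather than matrix tuples, are needed, consistent with Example \ref{ex:insufficent_OBE}.
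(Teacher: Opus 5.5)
Your proposal is correct and follows essentially the same route as the paper. Lemma \ref{lem:specrad_homo} and the hypothesis give $\rho_{\cE_1}(rZ^{(2)})=r<s$. Theorem \ref{thm:SS25} and Lemma \ref{lem:specrad_strict} then make evaluation at $rZ^{(2)}$ a completely bounded homomorphism on $A(s\bB_{\cE_1})$ with an $f$-independent constant $\|S\|\|S^{-1}\|$. The Taylor--Taylor series carries this over to $\cO(\bB_{\cE_1})$, and symmetry plus the matrix levels finish the proof.

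The only minor difference is how you pass to $\cO(\bB_{\cE_1})$. The paper bounds each homogeneous component by the Cauchy estimates of \cite{KVV} and sums a geometric series, getting $\sigma^{(2)}_r(f)\le \frac{C_{r,s}}{s'-s}\sigma^{(1)}_{s'}(f)$. You instead apply the evaluation homomorphism directly to $f|_{s\bB_{\cE_1}}\in A(s\bB_{\cE_1})$, which gives the slightly cleaner bound $\sigma^{(2)}_r(f)\le\|S\|\|S^{-1}\|\,\sigma^{(1)}_s(f)$.
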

\begin{proof}
    As above, we will denote by $Z^{(i)}$ the $d$-tuple of generators for $E_i$, for $i = 1,2$. By Lemma \ref{lem:specrad_homo}, we know that $\rho_{\cE_2}(r Z^{(2 )}) = r$, so by assumption $\rho_{\cE_1}(r Z^{(2)}) = r$. In particular, for every $r < s < 1$, $r Z^{(2)}$ is similar to a point in $s \bB^{(\infty)}_{\cE_1}$. Therefore, by Lemma \ref{lem:specrad_strict}, the evaluation map at $r Z^{(2)}$ gives rise to a completely bounded homomorphism on $A(s \bB_{\cE_1})$. In particular, there exists a constant $C_{r,s} > 0$, such that $\|g(r Z^{(2)})\| \leq C_{r,s} \|g\|_{A(s \bB_{\cE_1})}$ for every $g \in A(s \bB_{\cE_1})$. Now let $f \in \cO(\bB_{\cE_1})$. Write $f = \sum_{n=0}^{\infty} f_n$ its Taylor-Taylor expansion in terms of homogeneous components. Fix $s < s' < 1$. By the bound above and the Cauchy estimates in \cite[Theorem 7.21]{KVV} (in particular Equation (7.26) there with $\epsilon = s'/s - 1$), we get that
    \[
    \|f_n(r Z^{(2)})\| \leq C_{r,s} \|f_n\|_{A(s\bB_{\cE_1})} \leq \frac{C_{r,s}}{(1+s'/s - 1)^n} \sigma_{s'}^{(1)}(f) = C_{r,s} (s/s')^n \sigma_{s'}^{(1)}(f).
    \]
    Therefore, the series of $f$ converges absolutely and uniformly on $r \bB_{\cE_2}$ and we get the estimate $\sigma_r^{(2)}(f) \leq \frac{C_{r,s}}{s' - s} \sigma^{(1)}_{s'}(f)$. In other words, $f \in \cO(\bB_{\cE_2})$ and the identity is a continuous map between the two topologies. By symmetry, the identity is a homeomorphism. By the complete boundedness of the evaluation map, the exact same argument applies to matrix-valued functions. Hence, the identity is a complete homeomorphism.
\end{proof}

\begin{remark}
    The above theorem can be viewed through the lens of several variable complex analysis. By \cite{shalsham2025spectral}, for an operator space $\cE$, the function $\log(\rho_{\cE}(\cdot))$ is plurisubharmonic on every level of $\bM^d$. It gives us a plurisubharmonic, albeit not $C^{\infty}$, exhaustion of the similarity envelope of $\bB_{\cE}$. Note that every NC function extends uniquely to the similarity envelope of its original set of definition. Therefore, it is quite natural to say that the exhaustion functions agree if and only if the collection of ``all'' NC analytic functions on the domains agree.
\end{remark}

\begin{example} \label{ex:insufficent_OBE} 
    Consider the row space $\cE_1 = \left(\ell^2_d \right)_{\operatorname{row}}$ and the column space $\cE_2 = \left(\ell^2_d  \right)_{\operatorname{col}}$ we have $\cO(\bB_{\cE_1}) = \cO(\bB_{\cE_2})$, even though $\rho_{\operatorname{row}}(X) = \rho_{\operatorname{col}}(X)$ for all matrix tuples but not for all operator tuples (Remark \ref{rem:row_col}). 

    Let $f \in \C \langle z_1,\ldots,z_d \rangle$ be homogeneous. We note that $\|f\|_{A(\bB_{\operatorname{row}})} = \|f(L)\| = \|f\|_{\cF^2_d}$. Moreover, the transpose map $\cdot^t \colon \cF^2_d \to \cF^2_d$, which is the linear extension of the map sending a word to its reverse, is a unitary. Hence, $\|f(L)\| = \|f^t(L)\|$. Similarly, if we set $f^* = \bar{f}^t$, where $\bar{f}$ is the polynomial obtained by $f$ by applying the complex conjugation to the coefficients, we get $\|f^*(L)\| = \|f(L)\|$. Therefore,
    \begin{multline*}
    \|f\|_{A(\bB_{\operatorname{col}})} = \sup\{ \|f(X)\| \mid X \in \overline{\bB_{\operatorname{col}}} \} = \sup\{ \|f(X)^*\| \mid X \in \overline{\bB_{\operatorname{col}}} \} = \\ \sup\{ \|f^*(X^*)\| \mid X \in \overline{\bB_{\operatorname{col}}} \} = \sup\{f^*(Y)\| \mid Y \in \overline{\bB_{\operatorname{row}}} \} = \|f^*\|_{A(\bB_{\operatorname{row}})} = \|f\|_{A(\bB_{\operatorname{row}})}.
    \end{multline*}
    Now for $g \in \cO(\bB_{\operatorname{row}})$ write $g = \sum_{n = 0}^{\infty} g_n$ its Taylor-Taylor decomposition. Fix $0 < r < s$ and note that for every $n \in \bN$,
    \[
    \|g_n\|_{A(r \bB_{\operatorname{row}})} = \|g_n(r L)\| = \frac{r^n}{s^n} \|g_n(s L)\| = \frac{r^n}{s^n} \left\| \frac{1}{2\pi} \int_0^{2 \pi} e^{- i n \theta} g(s e^{i\theta} L) d\theta \right\| \leq \frac{r^n}{s^n} \sigma_s^{\operatorname{row}}(g).
    \]
    However, by what we have observed $\|g_n\|_{A(r \bB_{\operatorname{row}})} = r^n \|g_n\|_{A(\bB_{\operatorname{row}})} = r^n \|g_n\|_{A(\bB_{\operatorname{col}})} = \|g_n\|_{A(r \bB_{\operatorname{col}})}$. Therefore, the series of $g$ converges on $r \overline{\bB_{\operatorname{col}}}$ absolutely and uniformly. Moreover, we get $\sigma_r^{\operatorname{col}}(g) \leq \frac{s}{s - r} \sigma_s^{\operatorname{row}}(g)$. By symmetry, we get that $\cO(\bB_{\operatorname{row}}) = \cO(\bB_{\operatorname{col}})$ and the identity map is a continuous homeomorphism. The same game can be played with matrix-valued homogeneous polynomials, so the identity is in fact a complete homeomorphism.
\end{example}

\begin{remark}
        One is led to wonder whether it might be possible to obtain $A(\bB_{\cE_1}) = A(\bB_{\cE_2})$ from equality of the spectral norms. We do not know the answer, but we can show that equality of the spectral radii on matrix tuples is not sufficient to imply that. Consider again the row space $\cE_1$ and the column space $\cE_2$. If we consider for $d = 2$ the polynomials $p_n(z_1,z_2) = \sum_{k=1}^n z_1^k$ and $q_n(z_1,z_2) = \frac{1}{\sqrt{n}} p_n(z_1,z_2) z_2$. Then, $q_n(L_1,L_2)^* q_n(L_1,L_2) = \frac{1}{n} L_2^* p_n(L_1,L_2)^* p_n(L_1,L_2) L_2 = 1$. Hence, $\|q_n\|_{A(\bB_{\cE_1})} = 1$. However, we note that for every $(X_1,X_2) \in \overline{\bB_{\cE_2}}$, $(X_1^*,X_2^*) \in \overline{\bB_{\cE_1}}$. Hence,
    \begin{multline*}
    \|q_n\|_{A(\bB_{\cE_2})} = \sup\{\|q_n(X_1,X_2)\| \mid (X_1,X_2) \in \overline{\bB_{\cE_2}}\} = \\ \sup\{ \|q_n^t(X_1^*,X_2^*)\| \mid (X_1,X_2) \in \overline{\bB_{\cE_2}} \} = \|q_n^t\|_{A(\bB_{\cE_1})}
\end{multline*}
    Here $q_n^t(z_1,z_2) = \frac{1}{\sqrt{n}} z_2 p_n^t(z_1,z_2) = \frac{1}{\sqrt{n}} z_2 p_n(z_1,z_2)$. However, since $L_2$ is an isometry, we have that
    \[
    \|q_n^t(L_1,L_2)\| = \frac{1}{\sqrt{n}} \|p_n(L_1,L_2)\| = \sqrt{n}.
    \]
    Therefore, the identity is not a bounded homomorphism. However, for every $0 < r < 1$, we have that
    \begin{multline*}
    \|q_n(rL_1,rL_2)\| = \frac{r}{\sqrt{n}} \sqrt{\sum_{k=1}^n r^{2k}} = \frac{r^2}{\sqrt{n}} \dfrac{\sqrt{1 - r^{2n}}}{\sqrt{1-r^2}} \text{ and } \\ \|q_n^t(rL_1,rL_2)\| = \frac{r}{\sqrt{n}} \|p_n(r L_1, r L_2)\| = \frac{r}{\sqrt{n}} \sum_{k=1}^n r^k = \frac{r^2}{\sqrt{n}} \dfrac{1 - r^{n}}{1 - r}. 
    \end{multline*}
    Therefore, for every $0 < r < s < 1$,
    \[
    \lim_{n \to \infty} \frac{\|q_n^t(rL_1,rL_2)\|}{\|q_n(sL_1,sL_2)\|} = \frac{r^2}{s^{2}} \frac{\sqrt{1 - s^2}}{1 - r}.
    \]
    We conclude that there exists $C_{r,s}$, such that $\|q_n\|_{A(r \bB_{\cE_2})} \leq C_{r,s} \|q_n\|_{A(s \bB_{\cE_1})}$. Therefore, this classical example does not provide a contradiction to the main inequality in the proof of the theorem.
\end{remark}

\subsection*{Acknowledgments} The authors acknowledge the use of GPT-5.2 Pro (OpenAI) as a computational and an exploratory aid and in particular as a tool in the search for the counterexample presented in Example \ref{ex:not_persist}.

\bibliographystyle{abbrv}
\bibliography{spectral_radius_more}

\end{document}